\definecolor{gris25}{gray}{0.75}
\newtheorem{thm}{Theorem}[section]
\newtheorem{defn}[thm]{Definition}
\newtheorem{lem}[thm]{Lemma}
\newtheorem{prop}[thm]{Proposition}
\newtheorem{cor}[thm]{Corollary}
\newcommand{\T}{\mathcal{T}}
\newmdenv[innerlinewidth=0.5pt, roundcorner=4pt,innerleftmargin=6pt,
innerrightmargin=6pt,innertopmargin=6pt,innerbottommargin=6pt]{mybox3}
\date{}
\begin{document}

\title{\textbf{{\Huge Scaling limits of Markov--Branching trees and applications}} \\
{\large Lecture notes of the XII Simposio de Probabilidad y Procesos Estoc\'asticos}
\\
{\large 16 -- 20 novembre 2015, Mérida, Yucat\'an}
}

\author{\Large Bénédicte Haas\thanks{Université Paris 13, Sorbonne Paris Cité, LAGA, CNRS (UMR  7539) 93430 Villetaneuse, France. \newline E--mail:  \textcolor{red}{haas@math.univ-paris13.fr}. \newline This work was partially supported by the ANR GRAAL ANR--14--CE25--0014.}}

\maketitle

\abstract{The goal of these lectures is to survey some of the recent progress on the description of large--scale structure of random trees. We use the framework of Markov--Branching sequences of trees  and discuss several applications.

\tableofcontents

\setlength{\parindent}{0pt}
\setlength{\parskip}{8pt}

\section{Introduction}

The goal of these lectures is to survey some of the recent progress on the description of large--scale structure of random trees. Describing the structure of large (random) trees, and more generally large graphs, is an important goal of modern probabilities and combinatorics. Beyond the purely probabilistic or combinatorial aspects, motivations come from the study of models from biology, theoretical computer science or mathematical physics.

The question we will typically be interested in is the following. For $(T_n,n \geq 1)$ a sequence of random trees, where, for each $n$, $T_n$ is a tree of size $n$ (the size of a tree may be its number of vertices or its number of leaves,  for example):
does there exist a deterministic sequence $(a_n,n\geq 1)$ and a \emph{continuous} random tree $\mathcal T$ such that
$$
\frac{T_n}{a_n} \underset{n \rightarrow \infty}\longrightarrow \mathcal T \hspace{0.02cm}?
$$
To make sense of this question, we will view $T_n$ as a metric space by ``replacing" its edges with segments of length 1, and then use the notion of Gromov--Hausdorff distance to compare compact metric spaces.  
When such a convergence holds, the continuous limit  highlights some properties of the discrete objects that approximate it, and vice--versa. 

As a first example, consider $T_n$ a tree picked uniformly at random in the set of trees with $n$ vertices labelled by $\{1,\ldots,n\}$. The tree $T_n$ has to be understood as a \emph{typical element} of this set of trees. In this case the answer to the previous question dates back to a series of works by Aldous in the beginning of the 90's \cite{Ald91a, Ald91, Ald93}: Aldous showed that
\begin{equation}
\label{IntroAldous}
\frac{T_n}{2\sqrt n}  \underset{n \rightarrow \infty}{\overset{\mathrm{(d)}}\longrightarrow} \mathcal T_{\mathrm{Br}}
\end{equation}
where the limiting tree is called the Brownian Continuum Random Tree (CRT), and can be constructed from a standard Brownian excursion. This result has various interesting consequences, e.g. its gives the asymptotics in distribution of the diameter, the height (if we consider rooted versions of the trees) and several other statistics related to the tree $T_n$. Consequently it also gives the asymptotic proportion of trees with $n$ labelled vertices that have a diameter larger than $x\sqrt n \ $ or/and a height larger than $y\sqrt n$, etc. Some of these questions on statistics of uniform trees were already treated in previous works,  the strength of Aldous' result is that it describes the asymptotic of the \emph{whole} tree $T_n$. 

Aldous has actually established a version of the convergence (\ref{IntroAldous}) in a much broader context, that of \emph{conditioned Galton--Watson trees} with finite variance. In this situation, $T_n$ is the genealogical tree of a Galton--Watson process (with a given, fixed offspring distribution with mean one and finite variance) conditioned on having a total number of vertices equal to $n,n\geq 1$. Multiplied by $1/\sqrt n$, this tree  converges in distribution to  the Brownian CRT multiplied by a constant that only depends on the variance of the offspring distribution. This should be compared with (and is related to) the convergence of rescaled sums of i.i.d. random variables towards the normal distribution and  its functional analog, the convergence of rescaled random walks towards the Brownian motion. It turns out that the above sequence of uniform labelled trees can be seen as  a sequence of conditioned Galton--Watson trees (when the offspring distribution is a Poisson distribution) and more generally that several sequences of \emph{combinatorial} trees reduce to conditioned Galton--Watson trees. In the early 2000s, Duquesne \cite{Duq03} extended Aldous's result to conditioned Galton--Watson trees with offspring distributions in the domain of attraction of a stable law. We also refer to \cite{DLG02, KortSimple} for related results. In most of these cases the scaling sequences $(a_n)$ are asymptotically much smaller, i.e. $a_n\ll \sqrt n$, and other continuous trees arise in the limit, the so--called family of \emph{stable Lévy trees}.
All these results on conditioned Galton--Watson trees are now well--established, and have a lot of applications in the study of large random graphs (see e.g. Miermont's book \cite{MiermontStFlour} for the connections with random maps and Addario--Berry et al. \cite{ABBG12} for connections with Erd\H{o}s--R\'enyi random graphs in the critical window).

The classical proofs to establish the scaling limits of Galton--Watson trees rely on a careful study of their so--called \emph{contour functions}. It is indeed a common approach to encode trees into functions (similarly to the encoding of the Brownian tree by the Brownian excursion), which are more familiar objects. It turns out that for Galton--Watson trees, the contour functions are closely related to random walks, whose scaling limits are well--known. Let us also mention that another common approach to study large random combinatorial structures is to use technics of analytic combinatorics, see \cite{FlS09} for a complete overview of the topic. None of these two methods will be used here.

In these lectures, we will focus on another point of view, that of sequences of random trees that satisfy a certain \emph{Markov--Branching property}, which appears naturally in a large set of models and includes conditioned Galton--Watson trees. This property is a sort of discrete fragmentation property which roughly  says  that in each tree of the sequence, the  subtrees  above a  given height are independent with a law that depends only on their total size. Under appropriate assumptions, we will see that Markov--Branching sequences of trees, suitably rescaled, converge to  a family of continuous fractal trees, called the \emph{self--similar fragmentation trees}.  These continuous trees  are related to the self--similar fragmentation processes studied by Bertoin in the 2000s \cite{BertoinBook}, which are models used to describe the evolution of objects that randomly split as times passes. The main results on Markov--Branching trees presented here were developed in the paper \cite{HM12}, which has its roots in the earlier paper \cite{HMPW08}.  Several applications have been developed in these two papers, and in more recent works \cite{BerFire,HS15,Riz15}:  to Galton--Watson trees with arbitrary degree constraints, to several combinatorial trees families, including the P\'olya trees (i.e. trees uniformly distributed in the set of rooted, unlabelled, unordered trees with $n$ vertices, $n\geq 1$), to several examples of dynamical models of tree growth and to sequence of \emph{cut--trees}, which describe the genealogy of some deletion procedure of edges in trees.  The objective of these notes is to survey and gather these results, as well as further related results.

In Section \ref{sec:def} below, we will start with a series of definitions related to discrete trees and then present several classical examples of sequences of random trees. We will also introduce there the Markov--Branching property. In Section \ref{SectionGW} we set up the topological framework in which we will work, by introducing the notions of real trees and Gromov--Hausdorff topology. We also recall there the classical results of Aldous \cite{Ald91} and Duquesne \cite{Duq03} on large conditioned Galton--Watson trees. Section \ref{SCMB} is the core of these lectures. We present there the results on scaling limits of Markov--Branching trees, and give the main ideas of the proofs.  The key ingredient is the study of an integer--valued  Markov chain describing the sizes of the subtrees containing a typical leaf of the tree. Section \ref{sec:applic} is devoted to the applications mentioned above. Last, Section \ref{sec:further} concerns  further perspectives and related models (multi--type trees, local limits, applications to other random graphs).

All the sequences of trees we will encounter here have a power growth. There is however a large set of random trees that naturally arise in applications  that do not have such a behavior. In particular, many models of trees arising in the analysis of algorithms have a logarithmic growth. See e.g. Drmota's book \cite{Dr09} for an overview of the most classical models. These examples do not fit into our framework. 

\bigskip

\textbf{Acknowledgements.} I would like to thank the organizers of the \emph{XII Simposio de Probabilidad y Procesos Estocásticos}, held in Mérida, as well as my collaborators on the topic  of Markov--Branching trees, Grégory Miermont, Jim Pitman, Robin Stephenson and Matthias Winkel.

\section{Discrete trees, examples and motivations}
\label{sec:def}

\subsection{Discrete trees}
\label{sec:discrete}

A \textbf{discrete tree} (or graph--theoretic tree) is a finite or countable graph $(V,E)$  that is connected and has no cycle. Here $V$ denotes the set of vertices of the graph and $E$ its set of edges. Note that two vertices are then connected by exactly one path and  that $\# V=\# E+ 1$ when the tree is finite.


In the following, we will often denote a (discrete) tree by the letter $\mathsf t$, and for $\mathsf t=(V,E)$ we will use the slight abuse of notation $v \in \mathsf t$ to mean $v \in V$.

A  tree $\mathsf t$ can be seen as a metric space, when endowed with the \textbf{graph distance} $d_{\mathrm{gr}}$: given two vertices $u,v \in \mathsf t$,  $d_{\mathrm{gr}}(u,v)$ is defined as the number of edges of the path from $u$ to $v$.


A \textbf{rooted tree} $(\mathsf t,\rho)$ is an ordered pair where $\mathsf t$ is a tree and $\rho \in \mathsf t$.  The vertex $\rho$ is then called the {root} of $\mathsf t$.  This gives a genealogical structure to the tree. The root corresponds to the generation 0, its neighbors can be interpreted as its children and form the generation 1, the children of its children form the generation 2, etc. We will usually call the \textbf{height} of a vertex its generation, and denote it by $\mathsf{ht}(v)$ (the height of a vertex is therefore its distance to the root). The height of the tree is then
$$
\mathsf{ht}(\mathsf t)=\sup_{v \in \mathsf t} \mathsf{ht}(v)
$$
and its \textbf{diameter}
$$
\mathsf{diam}(\mathsf t)=\sup_{u,v \in \mathsf t} d_{\mathrm{gr}}(u,v).
$$

The \textbf{degree of a vertex $v \in \mathsf t$} is the number of connected components obtained when removing $v$ (in other words, it is the number of neighbors of $v$). A vertex $v$ different from the root and of degree 1 is called a \textbf{leaf}. In a rooted tree, the \textbf{out--degree of a vertex $v$} is the number of children of $v$. A (full) \textbf{binary} tree is a rooted tree where  all vertices but the leaves have out--degree 2. A \textbf{branch--point} is a vertex of degree at least 3. 


In these lectures, we will mainly work with  rooted trees.  Moreover we will consider, unless specifically mentioned, that two  isomorphic trees are equal, or, when the trees are rooted, that  \textbf{two root--preserving isomorphic trees are equal}. Such trees can be considered as \textit{unordered unlabelled} trees, in opposition to the following definitions.

\bigskip

\textbf{Ordered or/and labelled trees.} In the context of rooted trees, it may happen that one needs to order the children of the root, and then, recursively, the children of each vertex in the tree. This gives an \textbf{ordered} (or planar) tree.
Formally, we generally see such a tree as a subset of the infinite Ulam--Harris tree
$$
\mathcal U=\bigcup_{n=0}^{\infty} \mathbb N^n
$$ 
where $\mathbb N:=\{1,2,\ldots\}$ and $\mathbb N^{0}=\{\varnothing\}$. The element $\varnothing$ is the root of the Ulam--Harris tree, and all other $u=u_1u_2\ldots u_n \in \mathcal U\backslash \{\varnothing\}$ is connected to the root via the unique shortest path 
$$
\varnothing \rightarrow u_1 \rightarrow u_1u_2 \rightarrow \ldots \rightarrow u_1\ldots u_n.
$$
The height (or generation) of such a sequence $u$ is therefore its length, $n$. We then say that $\mathsf t \subset \mathcal U$ is  a (finite or infinite) rooted \emph{ordered} tree  if:
\begin{enumerate}
\item[$\bullet$] $\varnothing \in \mathsf t$ 
\item[$\bullet$] if $u=u_1\ldots u_n \in \mathsf t\backslash \{\varnothing\}$, then $u=u_1\ldots u_{n-1} \in \mathsf t$ (the parent of an individual in $\mathsf t$ that is not the root  is also in $\mathsf t$) 
\item[$\bullet$]  if $u=u_1\ldots u_n \in \mathsf t$, there exists an integer $c_u(\mathsf t)\geq 0$ such that the element $u_1\ldots u_n j \in \mathsf t$ if and only if $1 \leq j \leq c_u(\mathsf t)$.
\end{enumerate}
The number $c_u(\mathsf t)$ corresponds the number of children of $u$ in $\mathsf t$.


We will also sometimes consider \textbf{labelled} trees. In these cases, the vertices are labelled, typically by $\{1,\ldots,n\}$ if there are $n$ vertices (whereas in an unlabelled tree, the vertices but the root are indistinguishable). Partial labelling is also possible, e.g. by labelling only the leaves of the tree.

\textbf{In the following we will always specified when a tree is ordered or/and labelled. When not specified, it is implicitly unlabelled, unordered.}

\bigskip

\textbf{Counting trees.} It is sometimes possible, but not always, to have explicit formul{\ae} for the number of trees of a specific structure. For example, it is known that the number of trees with $n$ labelled vertices is
$$
n^{n-2} \quad \text{(Cayley formula)},
$$
and consequently, the number of rooted trees with $n$ labelled vertices is 
$$
n^{n-1}.
$$
The number of rooted ordered binary trees with $n+1$ leaves is
$$
\frac{1}{n+1} \binom{2n}{n} 
$$
(this number is called the $n$th Catalan number)
and the number of rooted ordered with with $n$ vertices is 
$$
\frac{1}{n} \binom{2n-2}{n-1}.
$$
On the other hand, there is no explicit formula for the number of rooted (unlabelled, unordered) trees.  Otter \cite{Otter48} shows that this number is asymptotically proportional to
$$
c \kappa^n n^{-3/2}
$$
where $c \sim 0.4399$ and $\kappa \sim 2.9557$. This should be compared to the asymptotic expansion of the $n$th Catalan number, which is proportional (by Stirling's formula) to $\pi^{-1/2} 4^n n^{-3/2}$.

We refer to the book of Drmota \cite{Dr09} for more details and technics, essentially based on generating functions. 

\subsection{First examples}
\label{sec:examples}

We now present a first series of classical families of random trees. Our goal will be to describe their scaling limits when the sizes of the trees grow, as discussed in the Introduction. This will be done in Section \ref{sec:applic}. Most of these families (but not all) share a common property, the Markov--Branching property that will be introduced in the next section.

\bigskip

\textbf{Combinatorial trees.} Let $\mathbb T_n$ denote a finite set of trees with $n$ vertices, all sharing some structural properties. E.g. $\mathbb T_n$ may 
be the set of all rooted trees with $n$ vertices, or the set of all  rooted ordered trees with $n$ vertices, or the set of all  binary trees with $n$ vertices, etc. We are interested in the asymptotic behavior of a ``typical element" of $\mathbb T_n$ as $n \rightarrow \infty$. That is, we pick a tree \emph{uniformly} at random in $\mathbb T_n$, denote it by $T_n$ and study its scaling limit. The  global behavior of $T_n$ as $n \rightarrow \infty$ will represent some of the features shared by most of the trees. For example, if the probability that the height of $T_n$ is larger than $n^{\frac{1}{2}+\varepsilon}$ tends to 0 as $n \rightarrow \infty$, this means that the proportion of trees in the set that have a height larger than $n^{\frac{1}{2}+\varepsilon}$ is asymptotically negligeable, etc. We will more specifically be interested in the following cases: 
\begin{enumerate}
\item[$\bullet$] $T_n$ is a uniform rooted tree with $n$ vertices 
\item[$\bullet$] $T_n$ is a uniform rooted ordered tree with $n$ vertices
\item[$\bullet$] $T_n$ is a uniform  tree with $n$ labelled vertices 
\item[$\bullet$] $T_n$ is a uniform rooted ordered  binary tree with $n$ vertices ($n$ odd)
\item[$\bullet$] $T_n$ is a uniform rooted binary tree with $n$ vertices ($n$ odd),
\end{enumerate}
etc. Many variations are of course possible, in particular one may consider trees picked uniformly amongst sets of trees with a given structure and $n$ \emph{leaves}, or more general degree constraints. Some of these uniform trees will appear again in the next example.


\bigskip

\textbf{Galton--Watson trees}. Galton--Watson trees are random trees describing the genealogical structure of Galton--Watson processes. These are simple mathematical models for the evolution of a population that continue to play un important role in probability theory and in applications.  Let $\eta$ be a probability on $\mathbb Z_+$ -- $\eta$ is called the \emph{offspring distribution} --  and let $m:=\sum_{i \geq 1}i \eta(i) \in [0,\infty]$ denote its mean. Informally, in a Galton--Watson tree with offspring distribution $\eta$, each vertex has a random number of children distributed according to $\eta$, independently. 
We will always assume that $\eta(1)<1$ in order to avoid the trivial case where each individual has a unique child. Formally, a $\eta$--Galton--Watson tree $T^{\eta}$ is usually seen as an ordered rooted tree and defined as follows (recall the Ulam--Harris notation $\mathcal U$): 
\begin{enumerate}
\item[$\bullet$] $c_{\varnothing}(T^{\eta})$ is distributed according to $\eta$ 
\item[$\bullet$] conditionally on $c_{\varnothing}(T^{\eta})=p$, the $p$ ordered subtrees $\tau_i=\{u \in \mathcal U:iu \in T^{\eta}\}$ descending from $i=1,\ldots,p$ are independent and distributed as $T^{\eta}$.
\end{enumerate}
From this construction, one  sees that the distribution of $T^{\eta}$ is given by:
\begin{equation}
\label{lawGW}
\mathbb P\left(T^{\eta}=\mathsf t\right)=\prod_{v \in \mathsf t} \eta_{c_v(\mathsf t)}
\end{equation}
 for all rooted ordered tree $\mathsf t$.
This definition  of Galton--Watson trees as ordered trees is the simplest, avoiding any symmetry problems. However in the following we will mainly see these trees up to isomorphism, which roughly means that we can ``forget the order''.

Clearly, if we call $Z_k$ the number of individuals at height $k$, then $(Z_k,k\geq 1)$ is a Galton--Watson process starting from $Z_0=1$. It is well-known that the extinction time of this process,
$$
\inf\{k \geq 0 :Z_k=0\}
$$
if finite with probability 1 when $m \leq 1$ and with a probability $\in [0,1)$ when $m>1$. The offspring distribution $\eta$ and the tree $T^{\eta}$ are said to be \emph{subcritical} when $m<1$,  \emph{critical} when $m=1$ and  \emph{supercritical} when $m>1$. From now on, we assume that
$$
m=1
$$
and for integers $n$ such that $\mathbb P(\# T^{\eta}=n)>0$, we let $T_n^{\eta,\mathrm v}$ denote a \emph{non--ordered} version of the Galton--Watson tree $T^{\eta}$ conditioned to have $n$ vertices. Sometimes, we will need to keep the order and we will let $T_n^{\eta,\mathrm v,\mathrm{ord}}$ denote this ordered conditioned version. We point out that in most cases, \emph{but not all}, a subcritical or a supercritical Galton--Watson tree conditioned to have $n$ vertices is distributed as a critical  Galton--Watson tree conditioned to have $n$ vertices with a different offspring distribution. So the assumption $m=1$ is not too restrictive. We refer to \cite{Janson12} for details on that point.

It turns out that conditioned Galton--Watson trees are closely related to combinatorial trees. Indeed, one can easily check with (\ref{lawGW}) that:
\begin{enumerate}
\item[$\bullet$] if $\eta \sim$ Geo(1/2), $T^{\eta,\mathrm v,\mathrm{ord}}_n$ is uniform amongst the set of rooted ordered trees with $n$ vertices
\item[$\bullet$] if $\eta \sim $ Poisson(1), $T^{\eta, \mathrm v}_n$ is uniform amongst the set of rooted trees with $n$ labelled vertices
\item[$\bullet$] if $\eta \sim \frac{1}{2}\left(\delta_0 + \delta_2 \right)$, $T^{\eta,\mathrm v,\mathrm{ord}}_n$ is uniform amongst the set of rooted ordered binary trees with $n$ vertices.
\end{enumerate}
We refer e.g. to Aldous \cite{Ald91} for additional examples. 

Hence, studying the large--scale structure of conditioned Galton--Watson trees will also lead to results in the context of combinatorial trees. As mentioned in the Introduction, the scaling limits of large conditioned Galton--Watson trees are now  well--known. Their study has been initiated by Aldous \cite{Ald91a,Ald91,Ald93} and then expanded by Duquesne \cite{Duq03}.  This will be reviewed in Section \ref{SectionGW}. However, there are some sequences of combinatorial trees that \emph{cannot} be reinterpreted as Galton--Watson trees, starting  with the example of the uniform rooted tree with $n$ vertices or the uniform rooted binary tree with $n$ vertices. Studying the scaling limits of these tree remained open for a while, because of the absence of symmetry properties. There scaling limits are presented in Section \ref{sec:Polya}.

In another direction, one may also wonder what happens when considering versions of  Galton--Watson trees conditioned to have $n$ leaves, instead of $n$ vertices, or more general degree constraints. This is discussed in Section \ref{GWmore}.

\bigskip

\textbf{Dynamical models of tree growth}. We now turn to several sequences of finite rooted random trees  that are built recursively by adding at each step new edges on the pre--existing tree. We start with a well--known algorithm that Rémy \cite{Rem85} introduced to generate uniform binary trees with $n$ leaves.

\emph{\textbf{Rémy's algorithm.}} The sequence $(T_n(\mathrm R),n\geq 1)$ is constructed recursively as follows:
\begin{enumerate}
\item[$\bullet$] Step $1$: $T_1(\mathrm R)$ is the tree with one edge and two vertices: one root, one leaf
\item[$\bullet$] Step $n$: given $T_{n-1}(\mathrm R)$, choose uniformly at random one of its edges and graft on ``its middle" one new edge-leaf, that is split the selected edge into two so as to obtain two edges separated by a new vertex, and then add a new edge-leaf to the new vertex. This gives $T_n(\mathrm R)$.
\end{enumerate}
It turns out (see e.g. \cite{Mar03}) that the tree $T_n(\mathrm R)$, to which has been subtracted the edge between the root and the first branch point, is distributed as a binary critical Galton--Watson tree conditioned to have $2n-1$ vertices, or equivalently $n$ leaves (after forgetting the order in the GW--tree).  As so, we deduce its asymptotic behavior from that of Galton--Watson trees. However  this model can be extended in several directions, most of which are not related to Galton--Watson trees. We detail three of them.  

\emph{\textbf{Ford's $\alpha$-model}} \cite{Ford05}. Let $\alpha \in [0,1]$. We construct a sequence $(T_n(\alpha),n\geq 1)$ by modifying Rémy's algorithm as follows: 
\begin{enumerate}
\item[$\bullet$] Step $1$: $T_1(\alpha)$ is the tree with one edge and two vertices: one root, one leaf
\item[$\bullet$] Step $n$: given $T_{n-1}(\alpha)$, give a weight $1-\alpha$ to each edge connected to a leaf, and $\alpha$ to all other edges (the internal edges). The total weight is $n-\alpha$. Now choose an edge at random with a probability proportional to its weight and graft on ``its middle" one new edge-leaf. This gives $T_n(\alpha)$.
\end{enumerate}
Note that when $\alpha=1/2$ the weights are the same on all edges and we recover Rémy's algorithm. When $\alpha=0$, the new edge is always grafted uniformly on an edge--leaf, which gives a tree $T_n(0)$ known as the \emph{Yule tree} with $n$ leaves. When $\alpha=1$, we obtain a deterministic tree called the \emph{comb tree}. This family of trees indexed by $\alpha\in [0,1]$ was introduced by Ford \cite{Ford05} in order to interpolate between the Yule, the uniform and the comb models. His goal was to propose  new models for phylogenetic trees.

\emph{\textbf{$k$--ary growing trees}} \cite{HS15}. This is another extension of Rémy's algorithm, where now several edges are added at each step. Consider an integer $k\geq 2$.  The sequence $(T_n(k),n\geq 1)$ is constructed recursively as follows:
\begin{enumerate}
\item[$\bullet$] Step $1$: $T_1(k)$ is the tree with one edge and two vertices: one root, one leaf
\item[$\bullet$] Step $n$: given $T_{n-1}(k)$, choose uniformly at random one of its edges and graft on ``its middle" $k-1$ new edges--leaf. This gives $T_n(k)$.
\end{enumerate}
When $k=2$, we recover Rémy's algorithm. For larger $k$, there is no connection with Galton--Watson trees.

\emph{\textbf{Marginals of stable trees -- Marchal's algorithm}}. In \cite{Mar06}, Marchal considered the following algorithm,  that attributes weights also to the vertices. Fix a parameter $\beta \in (1,2]$ and 
construct the sequence $(T_n(\beta),n\geq 1)$ as follows:
\begin{enumerate}
\item[$\bullet$] Step $1$: $T_1(\beta)$ is the tree with one edge and two vertices: one root, one leaf
\item[$\bullet$] Step $n$: given $T_{n-1}(\beta)$, attribute the weight
\begin{enumerate}
\item[$\circ$] $\beta-1$ on each edge
\item[$\circ$] $d-1-\beta$ on each vertex of degree $d\geq 3$.
\end{enumerate} 
The total weight is $n\beta-1$.
Then select at random an edge or vertex with a probability proportional to its weight and graft on it a new edge-leaf. This gives $T_n(\beta)$. 
\end{enumerate}
The reason why Marchal introduced this algorithm is that $T_n(\beta)$ is actually distributed as the shape of a tree with edge--lengths that is obtained by sampling $n$ leaves at random in the stable Lévy tree with index $\beta$.  The class of stable Lévy trees plays in important role in the theory of random trees. It is introduced in Section \ref{SCGW} below.

Note that when $\beta=2$, vertices of degree $3$ are never selected (their weight is 0). So the trees $T_n(\beta),n\geq 1$ are all binary, and we recover Rémy's algorithm.

\bigskip

Of course, several other extensions of trees built  by adding edges recursively may be considered, some of which are mentioned in Section \ref{sec:marginals} and Section \ref{sec:multitype}.

\textbf{Remark.} In these dynamical models of tree growth, we build \emph{on a same probability space} the sequence of trees, contrary to the examples of Galton--Watson trees or combinatorial trees that give sequences of \emph{distributions} of trees.  In this situation, one may expect to have more than a convergence in distribution for the rescaled sequences of trees. We will see in Section \ref{sec:cvdynam} that it is indeed the case.

\subsection{The Markov--Branching property} 
\label{sec:MB}

Markov--Branching trees were introduced by Aldous \cite{Ald96} as a class of random binary trees for  phylogenetic models and later extended to non--binary cases in Broutin \& al. \cite{BDMcLDlS08}, and  Haas \& al. \cite{HMPW08}. It turns out that many natural models of sequence of trees  satisfy the \textbf{Markov--Branching property} (\textbf{MB--property} for short), starting with the example of conditioned Galton--Watson trees and most of the examples of the previous section.

Consider
$$
\big(T_n,n\geq 1 \big)
$$
a sequence of trees where $T_n$ is a rooted (unordered, unlabelled) tree with $n$ \emph{leaves}. The MB--property is a property of the sequence of \emph{distributions} of $T_n, n \geq 1$. Informally, the MB--property says that for each tree $T_n$, \emph{given that} 
\begin{center}
``the root of $T_n$ splits in $p$ subtrees with respectively $\lambda_1 \geq \ldots \geq \lambda_p$ leaves",
\end{center}
 then $T_n$ is distributed as the tree obtained by gluing on a common root $p$ independent trees with respective distributions those of $T_{\lambda_1},\ldots,T_{\lambda_p}$. The way the leaves are distributed in the sub--trees above the root, in each $T_n$, for $n\geq 1$, will then allow to fully describe the distributions of the $T_n,n \geq 1$.
 
We now explain rigorously how to build such  sequences of trees. We start with a sequence of probabilities $(q_n,n\geq 1),$ where for each $n$,
$q_n$ is a probability on the set of partitions of the integer $n$. If $n \geq 2$, this set is defined by
$$
\mathcal P_n:=\left\{\lambda=(\lambda_1,\ldots,\lambda_p), \lambda_i\in \mathbb N, \lambda_1 \geq \ldots \geq \lambda_p\geq 1 : \sum_{i=1}^p \lambda_i=n \right\},
$$
whereas if  $n=1$, $\mathcal P_1:=\{(1),\emptyset\}$  (we need to have a cemetery point).
For a partition $\lambda \in \mathcal P_n$, we denote by $p(\lambda)$ its length, i.e. the number of terms in the sequence $\lambda$.  The probability $q_n$ will determine how the $n$ leaves of $T_n$ are distributed into the subtrees above its root.  We call such a probability a \emph{splitting distribution}. 
In order that effective splittings occur, we will always assume that
$$
q_n((n))<1, \quad \forall n \geq 1.
$$
We need  to define a notion of \emph{gluing} of trees. Consider $\mathrm t_1,\ldots,\mathrm t_p$, $p$ discrete rooted (unordered) trees. Informally, we want to glue them on a same common root in order to form a tree  $
\langle \mathrm t_1,\ldots,\mathrm t_p \rangle
$ whose root splits into the $p$ subtrees $\mathrm t_1,\ldots,\mathrm t_p$. Formally, this can e.g. be done as follows. Consider first ordered versions of the trees $\mathrm t^{\mathrm{ord}}_1,\ldots,\mathrm t^{\mathrm{ord}}_p$ seen as subsets of the Ulam--Harris tree $\mathcal U$ and then define a new ordered tree by
$$
\langle \mathrm t^{\mathrm{ord}}_1,\ldots,\mathrm t^{\mathrm{ord}}_p \rangle:=\{\varnothing\}\cup_{i=1}^p i \mathrm t^{\mathrm{ord}}_i.
$$
The tree $
\langle \mathrm t_1,\ldots,\mathrm t_p \rangle
$ is then defined as the unordered version of $
\langle \mathrm t^{\mathrm{ord}}_1,\ldots,\mathrm t^{\mathrm{ord}}_p \rangle
$.

\medskip

\begin{defn}
For each $n\geq 1$, let $q_n$ be a probability on $\mathcal P_n$ such that $q_n((n))<1$. From the sequence $(q_n,n \geq 1)$ we construct recursively a sequence of distributions  $(\mathcal L^{\mathbf q}_n)$ such that for all $n\geq 1$, $\mathcal L^{\mathbf q}_n$ is carried by  the set of rooted trees with $n$ leaves,  as follows:
\begin{enumerate}
\item[$\bullet$] $\mathcal L^{\mathbf q}_1$ is the distribution of a line--tree with $G+1$ vertices and $G$ edges where $G$ is a geometric distribution:
$$
\mathbb P(G=k)=q_1(\emptyset)(1-q_1(\emptyset))^k, \quad k\geq 0,
$$ 
\item[$\bullet$] for $n \geq 2$, $\mathcal L^{\mathbf q}_n$ is the distribution of
$$
\langle T_1,\ldots,T_{p(\Lambda)} \rangle
$$
where $\Lambda$ is a partition of $n$ distributed according to $q_n$, and given $\Lambda$, $T_1,\ldots,T_{p(\Lambda)}$ are independent trees with respective distributions $\mathcal L^{\mathbf q}_{\Lambda_1},\ldots,$ $\mathcal L^{\mathbf q}_{\Lambda_{p(\Lambda)}}$.
\end{enumerate}
A sequence $(T_n,n\geq 1)$ of random rooted trees such that $T_n \sim \mathcal L^{\mathbf q}_n$ for each $n \in \mathbb N$  
 is called a \emph{\textbf{MB--sequence of trees \emph{indexed by the leaves}, with splitting distributions $\boldsymbol{(q_n,n\geq 1)}$}}.  
\end{defn}

This construction may be re--interpreted as follows: we start from a collection of $n$ \emph{indistinguishable}
balls, and with probability $q_n(\lambda_1,\ldots,\lambda_p)$, split
the collection into $p$ sub--collections with
$\lambda_1,\ldots,\lambda_p$ balls. Note that there is a chance
$q_n((n))<1$ that the collection remains unchanged during this step of
the procedure. Then, re-iterate the splitting operation independently
for each sub-collection using this time the probability distributions
$q_{\lambda_1},\ldots,q_{\lambda_p}$.  If a sub-collection consists of
a single ball, it can remain single with probability $q_1((1))$ or get
wiped out with probability $q_1(\emptyset)$. We continue the procedure
until all the balls are wiped out. The tree $T_n$ is then the genealogical tree
associated with this process: it is  rooted at the initial collection of $n$ balls and its $n$ leaves
correspond to the $n$ isolated balls just before they are wiped out,
See Figure
\ref{fig:CMB} for an illustration. 

\begin{figure}
\begin{center}
\includegraphics[height=4.5cm]{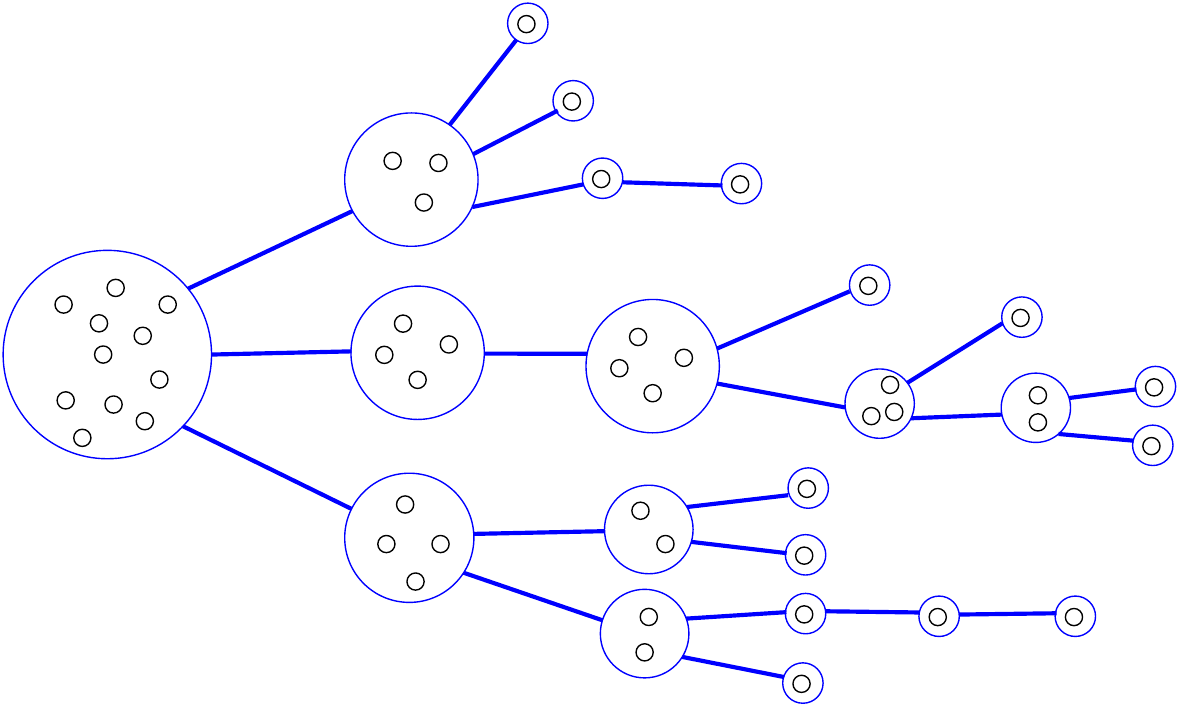}
\end{center}
\caption{A sample tree $T_{11}$. The first splitting arises with
  probability $q_{11}(4,4,3)$.}
\label{fig:CMB}
\end{figure}

We can define similarly MB--sequences of (distributions) of trees indexed by their number of \emph{vertices}. Consider here a sequence $(p_n,n\geq 1)$ such that $p_n$ is a probability on $\mathcal P_n$ with no restriction but 
$$p_1((1))=1.$$   
Mimicking the previous balls in urns construction and starting from a collection of
$n$ indistinguishable balls, we first remove a ball, split the $n-1$ remaining balls in
sub-collections with $\lambda_1,\ldots,\lambda_p$ balls with
probability $p_{n-1}((\lambda_1,\ldots,\lambda_p))$, and iterate
independently on sub-collections until no ball remains. Formally, this gives:

\begin{defn}
For each $n\geq 1$, let $p_n$ be a probability on $\mathcal P_n$,  such that $p_1((1))=1$.
From the sequence $(p_n,n \geq 1)$ we construct recursively a sequence of distributions  $(\mathcal V^{\mathbf p}_n)$ such that for all $n\geq 1$, $\mathcal V^{\mathbf p}_n$ is carried by the set of trees with $n$ vertices, as follows:
\begin{enumerate}
\item[$\bullet$] $\mathcal V^{\mathbf p}_1$ is the deterministic distribution of the tree reduced to one vertex,
\item[$\bullet$] for $n \geq 2$, $\mathcal V^{\mathbf p}_n$ is the distribution of
$$
\langle T_1,\ldots,T_{p(\Lambda)} \rangle
$$
where $\Lambda$ is a partition of $n-1$ distributed according to $p_{n-1}$, and given $\Lambda$, $T_1,\ldots,T_{p(\Lambda)}$ are independent trees with respective distributions $\mathcal V^{\mathbf p}_{\Lambda_1},\ldots, \mathcal V^{\mathbf p}_{\Lambda_{p(\Lambda)}}$.
\end{enumerate}
A sequence $(T_n,n\geq 1)$ of random rooted trees such that $T_n \sim \mathcal V^{\mathbf p}_n$ for each $n \in \mathbb N$  
 is called a \emph{\textbf{MB--sequence of trees indexed by the vertices, with splitting distributions }$\boldsymbol{(p_n,n\geq 1)}$}.  
\end{defn}

More generally, the MB--property can be extended to sequences of trees $(T_n,n\geq 1)$ with arbitrary degree constraints, i.e. such that for all $n$, $T_n$ has $n$ vertices in $A$, where $A$ is a given subset of $\mathbb Z_+$. We will not develop this here and refer the interested reader  to \cite{Riz15} for more details.

\bigskip

\textbf{Some examples.} 
\textbf{1. A deterministic example.}  Consider the splitting distributions on $\mathcal P_n$ 
$$q_n(\lceil n/2\rceil,\lfloor n/2\rfloor)=1,  \quad n \geq 2,$$ 
as well as $q_1(\emptyset)=1$. Let $(T_n,n\geq 1)$ the corresponding MB--sequence indexed by leaves. Then $T_n$ is a deterministic discrete binary tree, whose root splits in two subtrees with both $n/2$ leaves when $n$ is even, and respectively $(n+1)/2$, $(n-1)/2$ leaves when $n$ is odd. Clearly, when $n=2^k$,  the height of $T_n$ is exactly $k$, and more generally for large $n$, $\mathrm{ht}(T_n)\sim \ln(n)/\ln(2)$. 

\medskip

\textbf{2. A basic example.} For $n \geq 2$, let $q_n$ be the probability on $\mathcal P_n$ defined by
$$
q_n((n))=1-\frac{1}{n^{\alpha}} \quad \text{and} \quad q_n(\lceil n/2\rceil,\lfloor n/2\rfloor)=\frac{1}{n^{\alpha}} \quad \text{for some }\alpha>0,
$$
and let $q_1(\emptyset)=1$. Let $(T_n,n\geq 1)$ be a MB--sequence indexed by leaves with splitting distributions $(q_n)$. Then $T_n$ is a discrete tree with vertices with degrees $\in \{1,2,3\}$ where the distance between the root and the first branch point (i.e. the first vertex of degree 3) is a Geometric distribution on $\mathbb Z_+$ with success parameter $n^{-\alpha}$. The two subtrees above this branch point are  independent subtrees, independent of the Geometric r.v. just mentioned, and whose respective distances between the root and first branch point are  Geometric  distributions with respectively $(\lceil n/2\rceil)^{-\alpha}$ and $(\lfloor n/2\rfloor)^{-\alpha}$ parameters. Noticing the weak convergence
$$
\frac{\mathrm{Geo}(n^{-\alpha})}{n^{\alpha}} \overset{\mathrm{(d)}}{\underset{n \rightarrow \infty}\longrightarrow} \mathrm{Exp}(1)
$$
one may expect that $n^{-\alpha}T_n$ has a limit in distribution. We will later see that it is indeed the case.

\medskip

\textbf{3. Conditioned Galton--Watson trees}. Let $T_n^{\eta, \mathrm l}$ be a Galton--Watson tree with offspring distribution $\eta$, conditioned on having $n$ leaves, for integers $n$ for which this is possible. The branching property is then preserved by conditioning and the sequence $(T^{\eta,\mathrm l}_n,n : \mathbb P(\#_{\mathrm{leaves}} T^{\eta})>0)$ is Markov--Branching, with splitting distributions
$$
q^{\mathrm{GW},\eta}_n(\lambda)=\eta(p)\times \frac{p!}{\prod_{i=1}^p m_i(\lambda)!}\times \frac{\prod_{i=1}^{p}\mathbb P(\#_{\mathrm{leaves}} T^{\eta}=\lambda_i)}{\mathbb P(\#_{\mathrm{leaves}}  T^{\eta}=n)}
$$
for all $\lambda \in \mathcal P_n,n\geq 2$, where $\#_{\mathrm{leaves}} T^{\eta}$ is the number of leaves of the unconditioned Galton--Watson tree $T^{\eta}$, and $m_i(\lambda)=\# \{1\leq j \leq p:\lambda_j=i\}$. The probability $q^{\mathrm{GW},\eta}_1$ is given by $q^{\mathrm{GW},\eta}_1((1))=\eta(1)$.

Similarly, if $T_n^{\eta,\mathrm v}$  denotes  a Galton--Watson tree with offspring distribution $\eta$, conditioned on having $n$ \emph{vertices}, the sequence $(T^{\eta,\mathrm v}_n, \mathbb P(\#_{\mathrm{vertices}} T^{\eta})>0)$ is MB, with splitting distributions
\begin{equation}
\label{splitGW}
p^{\mathrm{GW},\eta}_{n-1}(\lambda)=\eta(p)\times \frac{p!}{\prod_{i=1}^p m_i(\lambda)!}\times \frac{\prod_{i=1}^{p}\mathbb P(\#_{\mathrm{vertices}} T^{\eta}=\lambda_i)}{\mathbb P(\#_{\mathrm{vertices}}  T^{\eta}=n)}
\end{equation}
for all $\lambda \in \mathcal P_{n-1},n \geq 3$ where $\#_{\mathrm{vertices}} T^{\eta}$ is the number of leaves of the unconditioned GW--tree $T^{\eta}$. Details can be found in \cite[Section 5]{HM12}.

\medskip

\textbf{4. Dynamical models of tree growth.}  Rémy's, Ford's, Marchal's and the $k$--ary algorithms all lead to MB--sequences of trees indexed by leaves. To be precise, we have to remove in each of these trees the edge adjacent to the root to obtain MB--sequences of trees (the roots have all a unique child). The MB--property can be proved by induction on $n$.  By construction, the distribution of the leaves in the subtrees above the root is closely connected to urns models.  We have the following expressions for the splitting distributions: 

\bigskip

\emph{{\textbf{Ford's $\alpha$--model.}}} For $k \geq \frac{n}{2}, n\geq 2$,
$$
q_n^{\mathrm{Ford},\alpha}(k,n-k)=\left(1+\mathbbm 1_{k\neq \frac{n}{2}}\right)\frac{\Gamma(k-\alpha)\Gamma(n-k-\alpha)}{\Gamma(n-\alpha)\Gamma(1-\alpha)}\left(\frac{\alpha}{2}\binom{n}{k}+(1-2\alpha) \binom{n-2}{k-1}\right),
$$
and $q_1(\emptyset)=1$. See \cite{Ford05} for details. 
In particular, taking $\alpha=1/2$ one sees that 
$$
q_n^{\text{Rémy}} (k,n-k)=\frac{1}{4}\left(1+\mathbbm 1_{k\neq \frac{n}{2}}\right)\frac{\Gamma(k-1/2)\Gamma(n-k-1/2)}{\Gamma(n-1/2)\Gamma(1-1/2)}\binom{n}{k}, \quad k \geq \frac{n}{2}, n \geq 2.
$$

\bigskip
\emph{{\textbf{$k$-ary growing trees.}}} Note that in these models, there are $1+(k-1)(n-1)$ leaves in the tree $T_n(k)$, so that the indices do not exactly correspond to the definitions of the Markov--Branching  properties seen in the previous section. However,  by relabelling,  defining for $m=1+(k-1)(n-1)$ the tree $\overline T_m(k)$ to be the tree $T_n(k)$ to which the edge adjacent to the root has been removed, we obtain a MB--sequence $(\overline T_m(k), m \in (k-1)\mathbb N+2-k)$. The splitting distributions are defined for $m = 1+(k-1)(n-1), n \geq 2$ and 
 $\lambda=(\lambda_1,\ldots,\lambda_k) \in \mathcal P_m$ such that $\lambda_i=1+(k-1)\ell_i,$ for some $\ell_i \in \mathbb Z_+$ for all $i$ (note that $\sum_{i=1}^k \ell_i=n-2$) by 
$$
q_m^{k}(\lambda)=\sum_{\mathbf n=(n_1,\ldots,n_k)\in \mathbb N^k : \mathbf n^{\downarrow}=\lambda}\overline q_m(\mathbf n)
$$
where $\mathbf n^{\downarrow}$ is the decreasing rearrangement of the elements of $\mathbf n$ and 	
\begin{equation*}
\overline q_m(\mathbf n)=\frac{1}{k(\Gamma(\frac{1}{k}))^{k-1}}\left(\prod_{i=1}^k \frac{\Gamma(\frac{1}{k}+n_i)}{n_i!}\right)\frac{(n-2)!}{\Gamma(\frac{1}{k}+n-1)} \left(\sum_{j=1}^{n_1+1} \frac{n_1!}{(n_1-j+1)!}\frac{(n-j-1)!}{(n-2)!}\right).
\end{equation*}
See \cite[Section 3]{HS15}.

\bigskip
\emph{{\textbf{Marchal's algorithm.}}} For $\lambda=(\lambda_1,\ldots,\lambda_p) \in \mathcal P_n, n \geq 2$,
$$
q_n^{\mathrm{Marchal},\beta}(\lambda)=\frac{n!}{ \lambda_1 ! \ldots \lambda_p ! m_1(\lambda) ! \ldots m_n(\lambda)! }\frac{\beta^{2-p}\Gamma(2-\beta^{-1})\Gamma(p-\beta)}{\Gamma(n-\beta^{-1})\Gamma(2-\beta)}\prod_{i=1}^p \frac{\Gamma(\lambda_j-\beta^{-1})}{\Gamma(1-\beta^{-1})}
$$
where $m_i(\lambda)=\# \{1\leq j \leq p:\lambda_j=i\}$.
This is a consequence of \cite[Theorem 3.2.1]{DLG02} and \cite[Lemma 5]{Mier03}.

\medskip

\textbf{5. Cut--trees.} 
\textit{Cut--tree of a uniform Cayley tree}.  Consider $C_n$ a uniform Cayley tree of size $n$, i.e. a tree picked uniformly at random amongst the set of rooted  tree with $n$ labelled vertices. This tree has the following \emph{recursive property} (see Pitman \cite[Theorem 5]{PitCoal99}): removing an edge uniformly at random in $C_n$ gives two trees, which given their numbers of vertices, $k,n-k$ say, are independent uniform Cayley trees of respective sizes $k,n-k$. Now, consider the following deletion procedure: remove in $C_n$ one edge uniformly at random, then remove another edge in the remaining set of $n-2$ edges uniformly at random and so on until all edges have been removed. It was shown by Janson \cite{Jan06} and Panholzer \cite{Pan06}   that the number of steps needed to isolate the root divided by $\sqrt n$ converges in distribution to a Rayleigh distribution (i.e. with density $x\exp(-x^2/2)$ on $\mathbb R_+$). 
Bertoin \cite{BerFire} was more generally interested in the number of steps needed to isolate $\ell$ distinguished vertices, and in that aim he introduced the \emph{cut--tree} $T^{\mathrm{cut}}_n$ of $C_n$. The tree $T^{\mathrm{cut}}_n$ is the genealogical tree of the above deletion procedure, i.e. it describes the genealogy of the connected components, see Figure \ref{FigBertoin} for an illustration and  \cite{BerFire} for a precise construction of $T^{\mathrm{cut}}_n$. Let us just mention here that $T^{\mathrm{cut}}_n$ is a rooted binary tree with $n$ leaves, and that Pitman's recursive property implies that $(T^{\mathrm{cut}}_n,n\geq 1)$ is MB. The corresponding splitting probabilities are:
$$
q^{\mathrm{Cut, Cayley}}_n(k,n-k)= \frac{(n-k)^{n-k-1}}{(n-k)!}\frac{k^{k-1}}{k!}\frac{(n-2)!}{n^{n-3}}, \quad n/2<k\leq n-1,
$$ 
the calculations are detailed in \cite{BerFire,Pavlov77}. 

\begin{figure}
\begin{center}
\includegraphics[height=11cm,angle=-90]{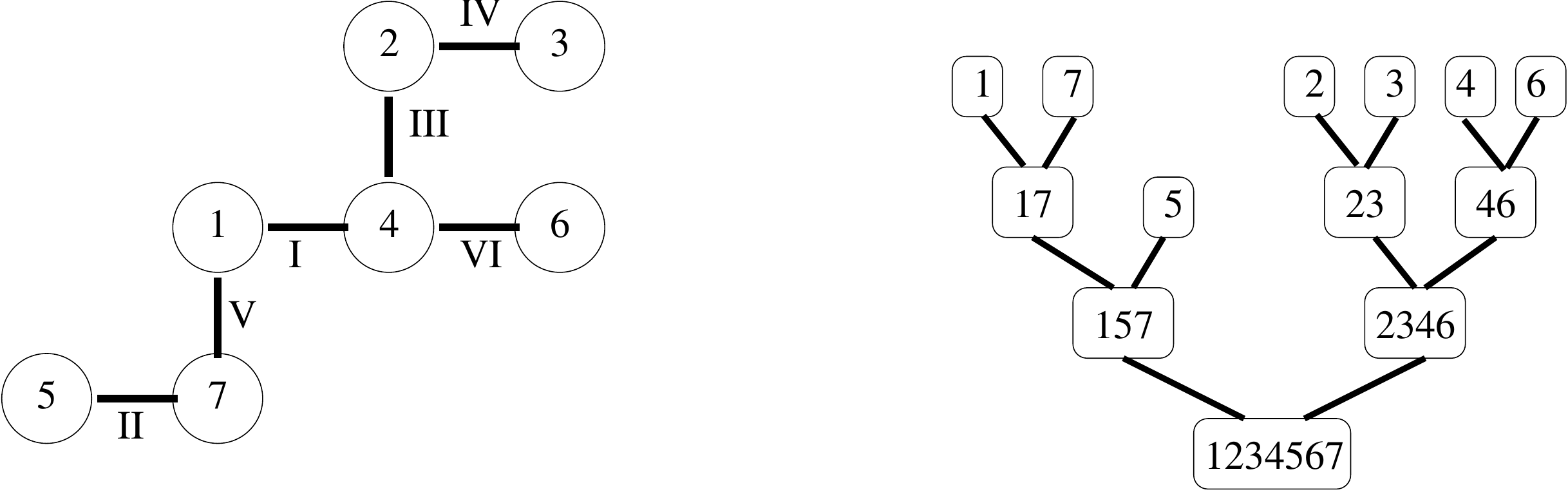}
\end{center}
\caption{On the left, a version of the tree $C_7$, with edges labelled in order of deletion. On the right the associated cut--tree $T^{\mathrm{cut}}_7$, whose vertices are the different connected components arising in the deletion procedure.}
\label{FigBertoin}
\end{figure}

\textit{Cut--tree of a uniform recursive tree}. A recursive tree  with $n$ vertices is a tree with vertices labelled by $1, \ldots, n$, rooted at 1, such that  the sequence of labels of vertices along any branch from the root to a leaf is increasing. It turns out that the cut--tree  of a uniform recursive tree is also MB and with splitting probabilities
$$
q^{\mathrm{Cut, Recursive}}_n(k,n-k)= \frac{n}{(n-1)}\left(\frac{1}{k(k+1)}+\frac{1}{(n-k)(n-k+1)}\right) \quad n/2<k\leq n-1,
$$   
see \cite{BerPercoRTT}.

\bigskip

\textbf{Remark.} The first example is a simple example of models where \emph{macroscopic branchings are frequent}, unlike the second example where  \emph{macroscopic branchings are rare} (they occur with probability $n^{-\alpha} \rightarrow 0$). By macroscopic branchings, we mean that the way that the $n$ leaves (or vertices) are distributed  above the root gives at least two subtrees with a size proportional to $n$. Although it is not completely obvious yet, nearly all other examples above have rare macroscopic branchings (in a sense that will be specified later) and  this is typically the context in which we will study the scaling limits of MB--trees. Typically the tree $T_n$ will then grow as a power of $n$.  When macroscopic branchings are frequent, there is no  scaling limit in general for the Gromov--Hausdorff topology, a topology introduced in the next section. However it is known that the height of the tree $T_n$ is then often of order $c\ln(n)$. This case has been studied in \cite{BDMcLDlS08}.

\section{The example of Galton--Watson trees and topological framework}
\label{SectionGW}

We start with an informal version of the prototype result of Aldous on the description of the scaling limits of conditioned Galton--Watson trees. Let $\eta$ be a critical offspring distribution with \emph{finite variance} $\sigma^2 \in (0,\infty)$, and let $T_n^{\eta,\mathrm v}$ denote a Galton--Watson tree with offspring distribution $\eta$, conditioned to have $n$ vertices (in the following it is implicit that we only consider integers $n$ such that this conditioning is possible). Aldous \cite{Ald93} showed that
\begin{equation}
\label{cvAldous}
\frac{\sigma}{2} \times \frac{T^{\eta, \mathrm v}_n}{\sqrt n} \overset{\mathrm{(d)}}{\underset{n \rightarrow \infty} \longrightarrow} \mathcal T_{\mathrm{Br}}
\end{equation}
where the continuous tree $\mathcal T_{\mathrm{Br}}$ arising in the limit is the Brownian Continuum Random Tree,  sometimes simply called the Brownian tree. Note that the limit only depends on $\eta$ via its variance $\sigma^2$.

This result by Aldous was a breakthrough in the study of large random trees, since it was the first to describe the behavior of the tree as a whole. We will discuss this in more details in Section \ref{SCGW}. Let us first introduce the topological framework in  order to make sense of this convergence.

\subsection{Real Trees and the Gromov--Hausdorff topology}

Since the pioneering works of Evans, Pitman and Winter \cite{EPW06} in 2003 and Duquesne and Le Gall \cite{DLG05} in 2005, the theory of \emph{real trees} (or $\mathbb R$-trees) has been intensively used in probability.  These trees are metric spaces having a ``tree property" (roughly, this means that for each pair of points $x,y$ in the metric space, there is a unique path going from $x$ to $y$ -- see below for a precise definition). This point of view allows behavior such as infinite total length of the tree, vertices with infinite degree, and density of the set of leaves. 


In these lectures, all the real trees we will deal with are compact metric spaces.  For this reason, we restrict ourselves to the theory of compact real trees. We now briefly recall background on real trees and  the Gromov--Hausdorff and Gromov--Hausdorff--Prokhorov distances, and refer to \cite{Eva08,LG06} for more details on this topic. 

\medskip

\textbf{Real trees.} A \emph{real tree} is a metric space $(\T,d)$ such that, for any points $x$ and $y$ in $\T$, 
\begin{enumerate}
\item[$\bullet$] there is an isometry $\varphi_{x,y}:[0,d(x,y)]\to \mathcal T$ such that $
\varphi_{x,y}(0)=x$ and $\varphi_{x,y}(d(x,y))=y$
\item[$\bullet$] for every continuous, injective function $c:[0,1]\to \mathcal T$ with
  $c(0)=x,$ $c(1)=y$, one has $c([0,1])=\varphi_{x,y}([0,d(x,y)])$.
\end{enumerate}

Note that a discrete tree may be seen as a real tree by ``replacing" its edges by line segments. Unless specified, it will be implicit in all these notes that these line segments are all of length 1.  

We denote by $[[x,y]]$ the line segment $\varphi_{x,y}([0,d(x,y)])$ between $x$ and $y$. A \emph{rooted} real tree is an ordered pair $((\mathcal T,d),\rho)$ such that $(\mathcal T,d)$ is a real tree and $\rho \in \mathcal T$. This distinguished point $\rho$ is called the root. 
The height of a point $x\in\T$ is defined by $$\mathsf{ht}(x)=d(\rho,x)$$ and the height of the tree itself is the supremum of the heights of its points, while the diameter is the supremum of the distance between two points:
$$
\mathrm{ht}(\mathcal T)=\sup_{x \in \mathcal T} d(\rho,x) \quad \quad \quad \mathrm{diam}(\mathcal T)=\sup_{x,y \in \mathcal T} d(x,y).
$$ 
The \emph{degree} of a point $x$ is the number of connected components of $\mathcal T \backslash \{x\}$. We call \emph{leaves} of $\T$ all the points which have degree $1$, excluding the root. 
Given two points $x$ and $y$, we define  $x\wedge y$ as the unique point of $\mathcal T$ such that $[[\rho,x]]\cap [[\rho,y]]=[[\rho,x \wedge y]]$. It is called the \emph{branch point} of $x$ and $y$ if its degree is larger or equal to 3. For $a>0$, we define the rescaled tree $a\T$ as $(\T,ad)$ (the metric $d$ thus being implicit and dropped from the notation). 

\bigskip

As mentioned above, we will only consider compact real trees.
We now want to measure how close two such metric spaces are. We start by recalling the definition of Hausdorff distance between compact subsets of a metric space.

\bigskip

\textbf{Hausdorff distance.} If $A$ and $B$ are two nonempty compact subsets of a metric space $(E,d)$,  the Hausdorff distance between $A$ and $B$ is defined by 
	\[d_{E,\mathrm{H}}(A,B)=\inf \big\{\varepsilon>0\:;\: A\subset B^\varepsilon\text{ and }B\subset A^\varepsilon \big\},
\]
where $A^{\varepsilon}$ and $B^{\varepsilon}$ are the closed $\varepsilon$-enlargements of $A$ and $B$, i.e. $A^{\varepsilon}=\{x \in E:d(x,A) \leq \varepsilon\}$ and similarly for $B^{\varepsilon}$.  

The Gromov--Hausdorff extends this concept to compact real trees (or more generally compact metric spaces) that are not necessarily compact subsets of a single metric space, by considering embeddings in common metric spaces.

\bigskip

\textbf{Gromov--Hausdorff distance.} Given two compact rooted trees $(\T,d,\rho)$ and $(\T',d',\rho')$, let
	\[d_{\mathrm{GH}}(\T,\T') = \inf \big\{ \max \big(d_{\mathcal{Z},\mathrm{H}} (\phi(\T),\phi'(\T')), d_\mathcal{Z}(\phi(\rho),\phi'(\rho'))\big)\big\},
\]
where the infimum is taken over all pairs of isometric embeddings $\phi$ and $\phi'$ of $\T$ and $\T'$ in the same metric space $(\mathcal Z,d_{\mathcal Z}),$ for all choices of metric spaces $(\mathcal Z,d_{\mathcal Z})$. 

We will also be concerned with \textit{measured} trees, that are real trees equipped with a probability measure on their Borel sigma--field. To this effect, recall first the definition of the Prokhorov distance between two probability measures $\mu$ and $\mu'$ on a metric space $(E,d)$:
	\[d_{E,\mathrm P}(\mu,\mu')=\inf \big\{\varepsilon >0\:;\: \forall A\in\mathcal{B}(E), \mu(A)\leq\mu'(A^{\varepsilon})+\varepsilon\text{ and } \mu'(A)\leq\mu(A^{\varepsilon})+\varepsilon \big\}.
\]
This distance metrizes the weak convergence on the set of probability measures on $(E,d)$. 

\bigskip

\textbf{Gromov--Hausdorff--Prokhorov distance.} Given two measured compact rooted trees $(\T,d,\rho,\mu)$ and $(\T',d',\rho',\mu')$, we let
  \[d_{\mathrm{GHP}}(\T,\T') = \inf \big\{ \max \big(d_{\mathcal{Z},\mathrm{H}} (\phi(\T),\phi'(\T')), d_\mathcal{Z}(\phi(\rho),\phi'(\rho')),d_{\mathcal{Z}, \mathrm P}(\phi_*\mu,\phi'_*\mu')\big)\big\},
\]
where the infimum is taken on the same space as before and $\phi_*\mu$, $\phi'_*\mu'$ are the push-forwards of $\mu$, $\mu'$ by $\phi$, $\phi'$.

\bigskip

The Gromov--Hausdorff distance $d_{\mathrm{GH}}$ indeed defined a distance on the set  of compact rooted real trees taken up to root--preserving isomorphisms. Similarly, The Gromov--Hausdorff--Prokhorov distance $d_{\mathrm{GHP}}$ is a distance on the set  of compact measured rooted real trees taken up to root--preserving and measure--preserving isomorphisms. Moreover these two metric spaces are Polish, see  \cite{EPW06} and \cite{ADH}. 
We will always identify two (measured) rooted $\mathbb R$-trees when their are isometric and still use the notation $(\mathcal T,d)$ (or $\mathcal T$ when the metric is clear) to design their isometry class.

\bigskip

\textbf{Statistics.} It is easy to check that the function that associates to a compact rooted tree its diameter is continuous (with respect to the GH--topology on the set of compact rooted real trees and the usual topology on $\mathbb R$). Similarly, the function that associates to a compact rooted tree its height is continuous.
The function that associates to a compact rooted measured tree the distribution of the height of a leaf chosen according to the probability on the tree is continuous as well (with respect to the GHP--topology on the set of compact rooted measured real trees and the weak topology on the set of probability measures on $\mathbb R$). Consequently, the existence of scaling limits with respect to the GHP--topology will directly imply scaling limits for the height, the diameter and the height of a typical vertex of the trees.

\subsection{Scaling limits of conditioned Galton--Watson trees}
\label{SCGW}

We can now turn to rigorous statements on the scaling limits of conditioned Galton--Watson trees.
We reformulate the above result  (\ref{cvAldous}) by Aldous in the finite variance case and also present the result by Duquesne \cite{Duq03} when the offspring distribution $\eta$ is heavy tailed, in the domain of attraction of a stable distribution. In the following, $\eta$ always denotes a critical offspring distribution, $T^{\eta,\mathrm v}_n$ is a $\eta$-GW tree conditoned to have $n$ vertices, and $\mu^{\eta, \mathrm v}_n$ is the uniform probability on its vertices.
The following convergences hold for the Gromov--Hausdorff--Prokhorov topology.

\medskip

\begin{thm}
\label{thm:AldousDuquesne}
\begin{enumerate}
\item[\emph{(i)}] \emph{(Aldous \cite{Ald93})} Assume that $\eta$ has a finite variance $\sigma^2$. Then, there exists a random compact real tree, called the Brownian tree and denoted $\mathcal T_{\mathrm{Br}}$, endowed with a probability measure $\mu_{\mathrm{Br}}$ supported by its set of leaves, such that as $n\rightarrow \infty$
$$ \left(\frac{\sigma T^{\eta,\mathrm v}_n}{2\sqrt n},\mu_n^{\eta,\mathrm v}\right) \overset{\mathrm{(d)}}{\underset{\mathrm{GHP}} \longrightarrow} \left(\mathcal T_{\mathrm{Br}},\mu_{\mathrm{Br}}\right).
$$
\item[\emph{(ii)}] \emph{(Duquesne \cite{Duq03})} If $\eta_k \sim \kappa k^{-1-\alpha}$ as $k \to \infty$ for $\alpha \in (1,2)$, then there exists a random compact real tree $\mathcal T_{\alpha}$, called the \emph{stable Lévy tree with index $\alpha$}, endowed with a probability measure $\mu_{\alpha}$ supported by its set of leaves, such that as $n \rightarrow \infty$
\begin{equation*}
\label{cvGW2}
\left(\frac{T^{\eta,\mathrm v}_n}{n^{1 - 1/\alpha}}, \mu^{\eta,\mathrm v}_n\right) \overset{\mathrm{(d)}}{\underset{\mathrm{GHP}} \longrightarrow} \left(\left(\frac{\alpha(\alpha-1)}{\kappa \Gamma(2-\alpha)} \right)^{1/\alpha} \alpha^{1/\alpha -1} \cdot \mathcal{T}_\alpha,\mu_{\alpha}\right).
\end{equation*}
\end{enumerate}
\end{thm}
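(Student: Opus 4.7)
The plan is to encode each conditioned Galton--Watson tree by a lattice path, reduce the statement to a functional invariance principle for that path, and recover the GHP convergence by continuity of the tree-coding map.

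First I would introduce the \emph{Lukasiewicz walk} $W^{(n)}=(W^{(n)}_0,\ldots,W^{(n)}_n)$ of $T^{\eta,\mathrm v}_n$: listing the vertices $v_0=\rho,v_1,\ldots,v_{n-1}$ in depth--first order (which requires choosing an ordered version of the tree, distributed uniformly among compatible orderings), set $W^{(n)}_0=0$ and $W^{(n)}_{k+1}-W^{(n)}_k=c_{v_k}(T^{\eta,\mathrm v}_n)-1$. A classical combinatorial identity (\textit{cf.} \cite{LG06}) shows that $W^{(n)}$ is distributed as a random walk with step law $\nu(k)=\eta(k+1)$, $k\geq -1$, conditioned to first hit $-1$ at time $n$. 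I would also introduce the \emph{contour function} $C^{(n)}:[0,2n]\to \mathbb R_+$ obtained by following the depth--first exploration of the tree at unit speed; the well--known bound $\sup_t |C^{(n)}_t - W^{(n)}_{\lfloor t/2 \rfloor}|$ can be controlled under the regimes of interest, so convergence of one will transfer to the other.

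Next I would establish the functional invariance principle for the conditioned walk. In case (i) the centered step law $\nu$ has mean $0$ and variance $\sigma^2 \in (0,\infty)$, and a local--limit--theorem--plus--absolute--continuity argument (developed by Aldous \cite{Ald93} and streamlined in \cite{LG06,KortSimple}) yields
$$
\left(\frac{1}{\sqrt n}\, W^{(n)}_{\lfloor n t \rfloor}\right)_{t \in [0,1]}
\overset{\mathrm{(d)}}{\underset{n\to\infty}\longrightarrow}\sigma\cdot \mathbf e_t,
$$
where $\mathbf e$ is the normalized Brownian excursion; an analogous statement for $C^{(n)}_{\lfloor 2nt\rfloor}/\sqrt n$ follows. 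In case (ii) the step law $\nu$ lies in the domain of attraction of a spectrally positive $\alpha$--stable law, and the analogous excursion invariance principle, obtained by Duquesne \cite{Duq03}, produces the normalized excursion $\mathbf e^{(\alpha)}$ of the $\alpha$--stable Lévy process, with explicit scaling constants coming from the tail parameter $\kappa$ and $\alpha$.

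Finally I would invoke the continuity of the coding map. For $f\in C([0,1],\mathbb R_+)$ with $f(0)=f(1)=0$, define the pseudo--metric $d_f(s,t)=f(s)+f(t)-2\min_{[s\wedge t,\,s\vee t]} f$, the quotient real tree $\mathcal T_f=[0,1]/\{d_f=0\}$ rooted at the class of $0$, and the mass measure $\mu_f$ induced by Lebesgue measure on $[0,1]$. The map $f\mapsto(\mathcal T_f,d_f,\rho_f,\mu_f)$ is Lipschitz from $(C([0,1]),\|\cdot\|_\infty)$ into $(\mathbb T_{\mathrm{GHP}},d_{\mathrm{GHP}})$, see \cite{DLG05,LG06}; applied to the discrete contour function it produces the discrete tree endowed with a measure whose Prokhorov distance to $\mu^{\eta,\mathrm v}_n$ is $O(1/n)$. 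Since $\mathcal T_{\mathrm{Br}}$ and $\mathcal T_\alpha$ are by definition the images under this map of $2\mathbf e$ and of $\mathbf e^{(\alpha)}$ (together with their natural mass measures), composing the functional limit with the Lipschitz coding gives both (i) and (ii) after tracking the constants.

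The main obstacle is the conditioned functional invariance principle, particularly in case (ii): the assumption $\eta_k\sim \kappa k^{-1-\alpha}$ only provides domain--of--attraction behavior, so sharp local limit theorems for heavy--tailed random walks and delicate control of the walk conditioned on its first passage time at $-1$ (via an $h$--transform / Vervaat--type re--rooting) are required. Once this input is granted, the identification of the Lukasiewicz walk, the comparison between contour and Lukasiewicz functions, the Lipschitz property of the coding map, and the convergence of the uniform vertex measure to the continuum mass measure are essentially routine.
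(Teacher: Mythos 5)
Your argument is correct in outline, but it follows the original contour--function route of Aldous and Duquesne, which these notes explicitly set aside; the paper instead recovers Theorem \ref{thm:AldousDuquesne} from the Markov--Branching machinery. Concretely, the paper observes that $(T_n^{\eta,\mathrm v})$ is an MB--sequence indexed by vertices with splitting distributions $p_n^{\mathrm{GW},\eta}$ given by \eqref{splitGW}, then verifies (Lemmas \ref{lemGW1} and \ref{lemGW2}) that these satisfy Hypothesis $(\mathsf H)$ with $(\gamma,\nu)=\bigl(1/2,\tfrac{\sigma}{2}\nu_{\mathrm{Br}}\bigr)$, respectively $\bigl(1-1/\alpha,\,(\kappa\Gamma(2-\alpha)\alpha^{-1}(\alpha-1)^{-1})^{1/\alpha}\nu_{\alpha}\bigr)$ --- the key inputs being the Otter--Dwass formula $\mathbb P(\#_{\mathrm{vertices}}T^{\eta}=n)=n^{-1}\mathbb P(S_n=-1)$ and a local limit theorem for a single generation --- and concludes by Theorem \ref{mainthm:vertices}. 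Your route instead requires the full conditioned functional invariance principle for the Lukasiewicz/contour process (the genuinely hard analytic step, especially in the stable case, as you acknowledge) together with the Lipschitz property of the coding map $f\mapsto\mathcal T_f$. What the paper's approach buys is that only the one--step splitting asymptotics need be controlled, the rest being absorbed into the general MB theorem, and the same verification pattern then extends to non--GW examples (P\'olya trees, cut--trees); what yours buys is an explicit excursion description of the limit and independence from the MB formalism. One caveat on constants: with the normalization adopted here, $\mathcal T_{\mathrm{Br}}$ is the tree coded by $\mathbf e$ itself (the paper's Section \ref{SCGW} exhibits $[0,1]/\!\sim_{\mathbf e}$ as a version of $\mathcal T_{\mathrm{Br}}$), so in case (i) you should land on $C^{(n)}_{2nt}/\sqrt n\to(2/\sigma)\mathbf e_t$ and hence $\sigma T_n^{\eta,\mathrm v}/(2\sqrt n)\to\mathcal T_{\mathbf e}$; your statement that $\mathcal T_{\mathrm{Br}}$ is ``by definition the image of $2\mathbf e$'' uses Aldous's convention and would misplace a factor of $2$ if carried through literally.
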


The result by Duquesne actually extends to cases where the offspring distribution $\eta$ is in the domain of attraction of a stable distribution with index $\alpha \in (1,2]$. See \cite{Duq03} for details.

\bigskip

The Brownian tree was first introduced by Aldous in the early 90s in the series of papers \cite{Ald91,Ald91a,Ald93}. This tree can be constructed in several ways, the most common being the following. Let $(\mathbf e(t),t \in [0,1])$ be a normalized Brownian excursion, which, formally, can be defined from a standard Brownian motion $B$ by letting 
$$
\mathbf e(t)=\frac{\left| B_{g+t(d-g)}\right|}{\sqrt{d-g}}, \quad 0 \leq t \leq 1,
$$
where $g:=\sup\{s \leq 1:B_s=0\}$ and $d=\inf\{s \geq 1:B_s=0\}$ (note that $d-g>0$ a.s. since $B_1 \neq 0$ a.s.). Then consider for $x,y \in [0,1], x \leq y$, the non--negative quantity
$$
d_{\mathbf e}(x,y)=\mathbf e(x)+\mathbf e(y)-2 \inf_{z \in [x,y]} \{\mathbf e(z)\},
$$
and then the equivalent relation $x \sim_{\mathbf e}y \Leftrightarrow d_{\mathbf e}(x,y)=0$.
It turns out that the quotient space $[0,1]/\sim_{\mathbf e}$ endowed with the metric induced by $d_{\mathbf e}$ (which indeed gives a true metric) is a compact real tree. The Brownian excursion $\mathbf e$ is called the \emph{contour function} of this tree.  Equipped with the  measure $\mu_{\mathbf e}$, which is the push--forward of the Lebesgue measure on $[0,1]$, this gives a version $([0,1]/\sim_{\mathbf e}, d_{\mathbf e}, \mu_{\mathbf e})$ of the measured tree $(\mathcal T_{\mathrm Br},\mu_{\mathrm Br})$.
To get a better intuition of what this means, as well as more details and other constructions of the Brownian tree, we refer to the three papers by Aldous \cite{Ald91,Ald91a,Ald93} and to the survey by Le Gall \cite{LG06}. 
  
In the early 2000s, the family of stable Lévy trees $(\mathcal T_{\alpha},\alpha \in(1,2)]$ -- where by convention $\mathcal T_2$ is  $\sqrt 2 \cdot \mathcal T_{\mathrm{Br}}$ -- was introduced by Duquesne and Le Gall~\cite{DLG02, DLG05} in the more general framework of \emph{Lévy trees}, building on earlier work of Le Gall and Le Jan \cite{LGLJ98}.  These trees can be constructed in a way similar as above from continuous functions built from the stable Lévy processes.  This construction is complex and we will not detail it here. Others constructions are possible, see e.g. \cite{DW07,GH15}.
The stable trees are important objects of the theory of random trees. They are intimately related to continuous state branching processes, fragmentation and coalescence processes. They appear as scaling limits of various models of trees and graphs, starting with the Galton--Watson examples above and  some other examples discussed in Section \ref{sec:applic}. In particular, it is noted that it was only proved recently that Galton--Watson trees conditioned by their number of leaves or more general arbitrary degree restrictions also converge in the scaling limit to stable trees, see Section \ref{GWmore} and the references therein. 

In the last few years, the geometric and fractal aspects of stable trees have been studied in great detail: Hausdorff and packing dimensions and measures  \cite{DLG05, DLG06, Duq12, HM04}; spectral dimension \cite{CrH10};  spinal decompositions and invariance under uniform re-rooting \cite{HPW09,DLG09}; fragmentation into subtrees \cite{Mier03, Mier05}; and embeddings of stable trees into each other \cite{CH13}. We simply point out here that the Brownian tree is \emph{binary}, in the sense that all its points have their degree in $\{1,2,3\}$ almost surely, whereas the stable trees  $\mathcal T_{\alpha},\alpha \in(1,2)$ have only points with degree in $\{1,2,\infty\}$ almost surely (every branch point has an infinite number of children).

\bigskip

\textbf{Applications to combinatorial trees.} Using the connections between some families of combinatorial trees and Galton--Watson trees mentioned in Section \ref{sec:discrete}, we obtain the following scaling limits (in all cases, $\mu_n$ denotes the uniform probability on the vertices of the tree $T_n$): 
\begin{enumerate}
\item[$\bullet$] If $T_n$ is uniform amongst the set of rooted ordered trees with $n$ vertices, $$\left(\frac{T_n}{\sqrt n},\mu_n\right) \overset{\mathrm{(d)}}{\underset{\mathrm{GHP}}\longrightarrow} \left(\mathcal T_{B_r},\mu_{\mathrm{Br}}\right).$$
\item[$\bullet$] If $T_n$ is uniform amongst the set of rooted  trees with $n$ labelled vertices, $$\left(\frac{T_n}{\sqrt n},\mu_n\right) \overset{\mathrm{(d)}} {\underset{\mathrm{GHP}}\longrightarrow} \left(2\mathcal T_{B_r},\mu_{\mathrm{Br}}\right).$$
\item[$\bullet$] If $T_n$ is uniform amongst the set of rooted binary ordered trees with $n$ vertices, $$\left(\frac{T_n}{\sqrt n},\mu_n\right) \overset{\mathrm{(d)}} {\underset{\mathrm{GHP}}\longrightarrow} \left(2\mathcal T_{B_r},\mu_{\mathrm{Br}}\right).$$
\end{enumerate}
As a consequence, this provides the behavior of several statistics of the trees, that first interested combinatorists.

\bigskip 

We will not present the proofs of Aldous \cite{Ald93} and Duquesne \cite{Duq03} of their results, but will rather focus on the fact that they may be recovered  by using the MB--property. This is the goal of the next two sections, where we will present in a general setting some results on the scaling limits for MB--sequences of trees. As already mentioned, the main idea of the proofs of Aldous \cite{Ald93} and Duquesne \cite{Duq03} is rather based on the study of the so--called \emph{contour functions} of the trees. We refer to Aldous and Duquesne papers, as well as Le Gall's survey \cite{LG06} for details. See also Duquesne and Le Gall \cite{DLG02} and Kortchemski \cite{Kort12, KortSimple} for further related results.

\section{Scaling limits of Markov--Branching trees}
\label{SCMB}

Our goal is to set up an asymptotic criterion on the splitting probabilities $(q_n)$ of a MB--sequence of trees so that this sequence, suitably normalized, converges to a non--trivial  continuous limit. We follow here the approach of the paper \cite{HM12} that found its roots in the previous work \cite{HMPW08} were similar results where proved under stronger assumptions. A remark on these previous results is made at the end of this section.

The splitting probability $q_n$ corresponds to a ``discrete" fragmentation of the integer $n$ into smaller integers. To set up the desired criterion, we first need to introduce a continuous counterpart for these partitions of integers, namely
$$\textcolor{black}{\mathcal S^{\downarrow}=\left\{\mathbf s=(s_1,s_2,\ldots): s_1 \geq s_2 \geq ... \geq 0 \ \text{and} \ \sum_{i\geq 1} s_i = 1\right\}}$$
which is endowed with the distance $d_{\mathcal S^{\downarrow}}(\mathbf s,\mathbf s')=\sup_{i\geq 1}|s_i-s_i'|$.
Our main hypothesis on $(q_n)$ then reads:

\smallskip

\begin{mybox3}
\textbf{Hypothesis ($\mathsf H$)}: there exist  \textcolor{black}{$\gamma>0$} and \textcolor{black}{$\nu$} a non--trivial \textcolor{black}{$\sigma-$finite measure}  on $\mathcal S^{\downarrow}$ satisfying \newline $\int_{\mathcal S^{\downarrow}}(1-s_1)\nu(\mathrm d \mathbf s)<\infty$ and $\nu(1,0,\ldots)=0$, such that
$$
n^{\textcolor{black}{\gamma}} \sum_{\lambda \in \mathcal P_n} q_n\left(\lambda\right) \Big(1-\frac{\lambda_1}{n} \Big)f\Big( \frac{\lambda_1}{n},\ldots,\frac{\lambda_p}{n},0,\ldots\Big) \underset{n \rightarrow \infty}\longrightarrow \int_{\mathcal S^{\downarrow}}(1-s_1) f(\mathbf s)\textcolor{black}{\nu}(\mathrm d \mathbf s).
$$
for all continuous $f: \mathcal S^{\downarrow} \rightarrow \mathbb R$.
\end{mybox3}

\vspace{-0.1cm}

We will see in Section \ref{sec:applic} that most of the examples of splitting probabilities introduced in Section \ref{sec:MB} satisfy this hypothesis. As a first, easy, example, consider the  ``basic example" introduced there (Example 2): $q_n((n))=1-n^{-\alpha}$ \ and \ $q_n(\lceil n/2\rceil, \lfloor n/2\rfloor)=n^{-\alpha}$, $\alpha>0$. Then, clearly, ($\mathsf H$) is satisfied with
$$
\gamma=\alpha \quad \text{and} \quad \nu(\mathrm d \mathbf s)=\delta_{\left(\frac{1}{2},\frac{1}{2},0,\ldots \right)}.
$$
The interpretation of the hypothesis ($\mathsf H$)  is that macroscopic branchings are rare,
in the sense that the macroscopic splitting events $n \mapsto n\mathbf s$, $\mathbf s \in \mathcal S^{\downarrow}$ with $s_1<1-\varepsilon$ occur  with a  probability asymptotically proportional to $n^{-\gamma} \mathbbm 1_{\{s_1 <1-\varepsilon \}}\nu(\mathrm d \mathbf s)$, for a.e. fixed $\varepsilon \in (0,1)$.
\bigskip

The main result on the scaling limits of MB--trees indexed by the leaves is the following.

\begin{thm}[\cite{HM12}]
\label{mainthm:leaves}
Let $(T_n,n \geq 1)$ be a \emph{MB}--sequence indexed by the leaves and assume that its splitting probabilities satisfy $\mathsf{(H)}$. Then there exists a compact, measured real tree $(\mathcal T_{\gamma,\nu}, \mu_{\gamma,\nu})$ such that
$$
\textcolor{black}{ \left(\frac{T_n}{n^{\gamma}}, \mu_n \right) \underset{\mathrm{GHP}}{\overset {\mathrm{(d)}} \longrightarrow} \left(\mathcal T_{\gamma,\nu}, \mu_{\gamma,\nu}\right)},
$$
where $\mu_n$ is the uniform probability on the leaves of $T_n$. 
\end{thm}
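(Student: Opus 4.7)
The plan is to follow the fragmentation--theoretic approach of \cite{HM12}: identify $(\mathcal T_{\gamma,\nu},\mu_{\gamma,\nu})$ with the Bertoin self--similar fragmentation tree with dislocation measure $\nu$ and index of self--similarity $\gamma$, and then verify, first, convergence of all finite--dimensional marginals through a well--chosen Markov chain, and second, global tightness for the Gromov--Hausdorff--Prokhorov topology.

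The central object on the discrete side is a Markov chain $(X^{(n)}_j)_{j\geq 0}$ on $\{0,1,\ldots,n\}$ started at $X^{(n)}_0=n$, defined by drawing $\Lambda\sim q_m$ when $X^{(n)}_j=m\geq 1$, then setting $X^{(n)}_{j+1}=\Lambda_I$ with $\mathbb P(I=i\,|\,\Lambda)=\Lambda_i/m$ (the size--biased pick), and absorbing at $0$ at the first leaf reached. By construction, $(X^{(n)}_j)$ is the sequence of sizes of the subtrees of $T_n$ containing a $\mu_n$--distributed leaf, read from the root, so its absorption time is exactly the height of such a leaf. The first step is to prove that, under $(\mathsf H)$, for any continuous $g:[0,1]\to\R$ with $g(1)=0$,
$$m^\gamma\,\mathbb E\bigl[g(X^{(n)}_1/m)\,\big|\,X^{(n)}_0=m\bigr]=m^\gamma\sum_{\lambda\in\p_m}q_m(\lambda)\sum_i\frac{\lambda_i}{m}g\Bigl(\frac{\lambda_i}{m}\Bigr)$$
converges as $m\to\infty$ to $\int_{\s^{\downarrow}}\sum_i s_i g(s_i)\,\nu(\mathrm d\mathbf s)$; this is obtained by rewriting $\sum_i s_i g(s_i)=(1-s_1)\tilde f(\mathbf s)$ for a continuous $\tilde f$ (using $g(1)=0$ to ensure the extension is continuous) and invoking $(\mathsf H)$. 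A Lamperti time--change of a L\'evy process then identifies the Feller limit of $\bigl(X^{(n)}_{\lfloor n^\gamma t\rfloor}/n,t\geq 0\bigr)$ as the tagged--fragment process of the $(\gamma,\nu)$--fragmentation, whose absorption time has the law of the height of a $\mu_{\gamma,\nu}$--distributed leaf of $\mathcal T_{\gamma,\nu}$.

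To promote this one--leaf statement to convergence of all $k$--leaf reduced subtrees, sample $L_1,\ldots,L_k$ independently from $\mu_n$ and consider the discrete reduced tree $R^{(k)}_n$ they span together with the root. Applying the MB--property recursively along $R^{(k)}_n$, the shape of $R^{(k)}_n$ is driven by the conditional splittings of $q_m$ given the labelling produced by picking $k$ size--biased independent samples, and the side subtrees are independent MB--trees of known sizes. A direct extension of the one--leaf convergence to these $k$--label chains, together with Slutsky--type arguments, yields $n^{-\gamma}R^{(k)}_n\to\mathcal R^{(k)}_{\gamma,\nu}$ jointly for all $k$, where $\mathcal R^{(k)}_{\gamma,\nu}$ is the subtree of $\mathcal T_{\gamma,\nu}$ spanned by the root and $k$ independent $\mu_{\gamma,\nu}$--samples.

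The main obstacle is global tightness: one must show both that $n^{-\gamma}\mathsf{ht}(T_n)$ is tight and, more delicately, that the maximal rescaled height of a subtree of $T_n$ not reached by any of the first $k$ sampled leaves tends to $0$ as $k$ (then $n$) goes to infinity. This is where $\nu$ being possibly infinite makes the argument subtle. The strategy is to build a Foster--Lyapunov function $V(m)\asymp m^\gamma$ for the chain $(X^{(n)}_j)$, using $(\mathsf H)$ to control the drift $\mathbb E[V(X^{(n)}_{j+1})-V(X^{(n)}_j)\mid X^{(n)}_j=m]\leq -c$ for $m$ large, thereby getting uniform $L^p$ bounds ($p>1$) on the absorption time and hence on the height of a typical leaf. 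A branching union bound over the recursive MB--structure then upgrades this to an estimate on the full height of $T_n$. Combined with the finite--dimensional convergence and the almost--sure density of $\mu_{\gamma,\nu}$--samples in $\mathcal T_{\gamma,\nu}$ (which lets the $k$--leaf reduced subtree approximate the whole tree), this delivers the claimed GHP convergence.
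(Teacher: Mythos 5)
Your proposal follows the same architecture as the paper's proof: the size--biased chain of subtree sizes along the ancestral line of a uniform leaf, its scaling limit as the tagged fragment $\exp(-\xi_{\tau})$ of the $(\gamma,\nu)$--fragmentation, induction over the MB--structure for the $k$--leaf reduced trees, and a polynomial tail bound on heights to close the GHP tightness. Two steps are, however, glossed over in a way that matters. First, you pass from Skorokhod convergence of $\bigl(X^{(n)}_{\lfloor n^{\gamma}t\rfloor}/n\bigr)$ to convergence of its absorption time (the height of the marked leaf) without comment; convergence in $\mathbb D([0,\infty),[0,\infty))$ does not imply convergence of absorption times in general --- the paper flags exactly this point --- and one needs separate uniform control on the residual absorption time started from small states to conclude.

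Second, and more seriously, the tightness step. The Foster--Lyapunov drift for $V(m)=m^{\gamma}$ does work (since $m^{\gamma}\mathbb E[1-(X^{(m)}_1/m)^{\gamma}]\to\phi(\gamma)>0$ under $(\mathsf H)$) and yields $\mathbb P(\mathsf{ht}(\star_n)\geq xn^{\gamma})\leq C_p x^{-p}$ uniformly in $n$. But a union bound over the $n$ leaves then gives $nC_px^{-p}$, which does not vanish as $x\to\infty$ for fixed large $n$: the single--leaf estimate cannot be upgraded to the maximum over all leaves by a union bound. The bound actually needed, $\mathbb P(\mathsf{ht}(T_n)\geq xn^{\gamma})\leq C_px^{-p}$, is proved in the paper directly by induction on $n$, writing $\mathsf{ht}(T_n)\leq 1+\max_i\mathsf{ht}(T_{\Lambda_i})$ and using $(\mathsf H)$ together with the MB--property to control the sizes of the subtrees at each split; your recursive drift idea can be salvaged in that form, but as stated the step fails. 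A minor further point: $g(1)=0$ alone does not make $\tilde f(\mathbf s)=\sum_i s_ig(s_i)/(1-s_1)$ continuous at $(1,0,\ldots)$ --- take $g(x)=\sqrt{1-x}$ --- so you should restrict to test functions of the form $g(x)=(1-x)h(x)$ with $h$ continuous, which is all that the derived hypothesis $(\mathsf H')$ requires.
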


\smallskip
The goal of this section is to detail the main steps of the proof of this result and to discuss some properties  of the limiting measured tree, which belongs to the so--called family of \emph{self--similar fragmentation trees} (the distribution of such a tree is entirely characterized by the parameters $\gamma$ and $\nu$). In that aim we will first study how  the height of a leaf chosen uniformly at random in $T_n$ grows (Section \ref{sec:Markov} and Section \ref{sec:limitMarkov}). Then we will review some results on self--similar fragmentation trees (Section \ref{sec:fragtrees}). Last we will explain how one can use the scaling limit  of the height of a leaf chosen at random to obtain, by induction, the scaling limit  of the subtree spanned by $k$ leaves chosen independently, for all $k$, and then finish the proof of Theorem \ref{mainthm:leaves} with a tightness criterion (Section \ref{sec:proof}).

\bigskip

There is a similar result for MB--sequences indexed by the vertices. 

\begin{thm}[\cite{HM12}]
\label{mainthm:vertices}
Let $(T_n,n \geq 1)$ be a \emph{MB}--sequence indexed by the vertices and assume that its splitting probabilities satisfy $\mathsf{(H)}$ for some $0<\gamma<1$.
Then there exists a compact, measured real tree $(\mathcal T_{\gamma,\nu}, \mu_{\gamma,\nu})$ such that
$$
\textcolor{black}{ \left(\frac{T_n}{n^{\gamma}}, \mu_n \right) \underset{\mathrm{GHP}}{\overset {\mathrm{(d)}} \longrightarrow} \left(\mathcal T_{\gamma,\nu}, \mu_{\gamma,\nu}\right)},
$$
where $\mu_n$ is the uniform probability on the vertices of $T_n$. 
\end{thm}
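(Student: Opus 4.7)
The natural strategy is to follow the same blueprint as the proof of Theorem~\ref{mainthm:leaves}, with the MB--property now formulated with respect to vertices rather than leaves. The only substantive difference is that each splitting step consumes one vertex (the root of the current subtree), introducing a unit deterministic drift in the relevant Markov chain. The assumption $\gamma<1$ is precisely what makes this drift asymptotically negligible at the macroscopic scale~$n$.

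\textbf{Step 1: the chain along a uniform ray.} Pick a vertex $U$ uniformly in $T_n$ and let $X_k^{(n)}$ be the number of vertices of the subtree at generation $k$ on the unique path from the root of $T_n$ to $U$, with $X_0^{(n)}=n$. By the MB--property, $(X_k^{(n)})_{k\geq 0}$ is a Markov chain on $\{0,1,\ldots,n\}$: from state $m\geq 1$ it is absorbed at $0$ with probability $1/m$ (meaning that $U$ is the local root at this step), and otherwise it splits according to $p_{m-1}$, landing in block $\lambda_i$ with probability proportional to $\lambda_i/(m-1)$. In particular, at each non--absorbing step the new state is at most $m-1$, because only $m-1$ vertices are distributed among the subtrees.

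\textbf{Step 2: scaling limit of the chain.} Following the argument of Section~\ref{sec:Markov}, hypothesis~$(\mathsf{H})$ yields the convergence of $(X^{(n)}_{\lfloor n^\gamma t\rfloor}/n)_{t\geq 0}$ to a positive $\gamma$--self-similar Markov process $\xi$, whose characteristics are expressed in terms of $\nu$ in exactly the same way as in the leaf--indexed case. Macroscopic jumps (those moving $X^{(n)}_k/n$ by an order--one amount) concentrate on the prescribed fragmentation integral. The key point is that the deterministic $-1$ per step, together with all other microscopic contributions, gives a total cumulative perturbation of order $n^{-1}\cdot n^{\gamma}=n^{\gamma-1}\to 0$ on the rescaled chain, and this is exactly where the hypothesis $\gamma<1$ is used. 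Quantifying this perturbation is the main technical obstacle, and will require moment or martingale estimates on $X^{(n)}$ that are uniform in the starting state, together with a careful comparison with the leaf--case chain built from a splitting distribution derived from $p_{m-1}$.

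\textbf{Step 3: from the chain to the tree, and GHP tightness.} The rest of the argument parallels Section~\ref{sec:proof}. By induction on $k$, applying the MB--property at the root reduces the joint law of the subtree $R_n^{(k)}$ spanned by the root and $k$ i.i.d. uniform vertices to independent MB--subtrees above the first branch point separating the sample, of (random) sizes that are themselves controlled by Step~2. Combined with the $k=1$ convergence and the induction hypothesis, this identifies the limit of $n^{-\gamma}R_n^{(k)}$ as the $k$--subtree of $(\mathcal T_{\gamma,\nu},\mu_{\gamma,\nu})$. One concludes with the same GHP tightness criterion used in the leaves case, whose core is the estimate that, with probability tending to one, no branch of $T_n$ avoiding the sampled vertices has height of order $n^\gamma$; once more, $\gamma<1$ enters to absorb the unit--drift correction in this uniform height bound.
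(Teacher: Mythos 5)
Your strategy is workable, but it is not the paper's, and it is considerably heavier than necessary. The paper proves Theorem \ref{mainthm:vertices} in one paragraph by a reduction: attach to every internal vertex of $T_n$ a pendant edge--leaf, producing a tree $\overline T_n$ with exactly $n$ leaves; the sequence $(\overline T_n)$ is MB--indexed by leaves with splitting probabilities $q_n(\lambda_1,\ldots,\lambda_p,1)=p_{n-1}(\lambda_1,\ldots,\lambda_p)$, and $(q_n)$ satisfies $(\mathsf H)$ with the same $(\gamma,\nu)$ precisely when $(p_n)$ does and $\gamma<1$ (the extra part of size $1$ and the index shift $n\mapsto n-1$ contribute errors of order $n^{\gamma-1}$). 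Since the uniform measure on vertices of $T_n$ is carried to the uniform measure on leaves of $\overline T_n$ and the identification distorts distances by at most a bounded amount, $d_{\mathrm{GHP}}$ between the two rescaled measured trees is $O(n^{-\gamma})$, and Theorem \ref{mainthm:leaves} applies verbatim. This reduction is what the hypothesis $\gamma<1$ is really for, and it dispenses with all of your Steps 1--3.

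If you insist on the direct route, it does go through, but two points in your write--up need repair. First, in Step 2 you present the unit decrement as the "main technical obstacle'' requiring new martingale estimates and a "careful comparison with the leaf--case chain.'' In fact no comparison is needed: the vertex chain of Step 1, with transitions $p(m,j)=\sum_{\lambda\in\mathcal P_{m-1}}p_{m-1}(\lambda)m_j(\lambda)\,j/m$ for $j\geq 1$ and $p(m,0)=1/m$, satisfies hypothesis $(\mathsf H')$ of Theorem \ref{cvMC} directly, so that theorem (which is stated for arbitrary non--increasing $\mathbb Z_+$--valued chains, not only leaf--counting ones) applies as a black box. Second, the role of $\gamma<1$ is not best described as cancelling a cumulative drift of the path $X^{(n)}$; the decisive term is the absorption probability $1/m$ from state $m$, which in the $(\mathsf H')$ sum contributes $n^{\gamma}\cdot\frac1n f(0)=n^{\gamma-1}f(0)$. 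If $\gamma=1$ this would survive as an atom of $\mu$ at $0$, i.e.\ a killing rate for the subordinator, changing the limit law of the height; for $\gamma<1$ it vanishes and one recovers exactly the exponent $\phi$ of Corollary \ref{corocvMC}. As written, your Step 2 leaves this verification open, and your Step 3 should also note explicitly that the $k$ sampled points are uniform vertices rather than leaves, so the identification of the limit of $n^{-\gamma}R_n^{(k)}$ with the reduced tree of $k$ points sampled from $\mu_{\gamma,\nu}$ (a measure supported on leaves) requires the $k=1$ height convergence to be read as a statement about the measure, not only the metric. All of these frictions are exactly what the pendant--leaf reduction avoids.
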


Theorem \ref{mainthm:vertices} is actually a direct corollary of Theorem \ref{mainthm:leaves}, for the following reason. Consider a MB--sequence indexed by the vertices with splitting probabilities $(p_n)$ and for all $n$, branch on each internal vertex of the tree $T_n$ an edge with a leaf. This gives a tree $\overline T_n$ with $n$ vertices. It is then obvious that $(\overline T_n,n\geq 1)$ is a MB--sequence indexed by the \emph{leaves}, with splitting probabilities $(q_n)$ defined by
$$
q_n(\lambda_1,\ldots,\lambda_{p},1)=p_{n-1}(\lambda_1,\ldots,\lambda_{p}), \quad \text{for all } (\lambda_1,\ldots,\lambda_{p}) \in \mathcal P_{n-1}
$$
(and $q_n(\lambda)=0$ for all other $\lambda \in \mathcal P_n$). It is moreover easy to see that $(q_n)$ satisfies ($\mathsf H$) with parameters $(\gamma,\nu)$, $0<\gamma<1$, if and only if $(p_n)$ does. Hence Theorem \ref{mainthm:leaves} implies Theorem \ref{mainthm:vertices}.

\bigskip

We will present in Section \ref{sec:applic} several applications of these two theorems. Let us just consider here the ``basic example" of Section \ref{sec:MB} (Example 2). We have already noticed  that its splitting probabilities satisfy Hypothesis $(\mathrm H)$, with parameters $\alpha$ and $\delta_{(1/2,1/2,\ldots)}$. Hence in this case, the corresponding sequence of MB--trees $T_n$ divided by $n^{\alpha}$ and endowed with the uniform probability measure on its leaves converges for the GHP--topology towards a $(\alpha, \delta_{(1/2,1/2,\ldots)})$--self--similar fragmentation tree. 

\bigskip

\textbf{Remark.} These two statements are also valid when replacing in ($\mathsf H$) and in the theorems the power sequence $n^{\gamma}$ by any regularly varying sequence with index $\gamma>0$. We recall that a sequence $(a_n)$ is said to vary regularly with index $\gamma>0$ if for all $c>0$,
$$
\frac{a_{\lfloor cn \rfloor}}{a_n} \underset{n \rightarrow \infty} \longrightarrow c^{\gamma}.
$$
We refer to \cite{BGT} for backgrounds on that topic. For simplicity, in the following we will only works with power sequences, but the reader should have in mind that everything holds similarly for regularly varying sequences. 

\bigskip

\textbf{Convergence in probability.} In \cite{HMPW08}, scaling limits are established for some MB--sequences that moreover satisfy a property of \emph{sampling consistency}, namely that for all $n$, $T_n$ is distributed as the tree with $n$ leaves obtained by removing a leaf picked uniformly at random in $T_{n+1}$, as well as the adjacent edge. This consistency property is demanding and the approach developed in \cite{HM12} allows to do without it. However a strengthened  version of the consistency property has also a significant advantage, leading to convergence in probability. Indeed, if the MB--sequence is \emph{strongly sampling consistent}, which means that versions of the trees can be built on a same probability space so that if $T_n^{\circ}$ denotes the tree with $n$ leaves obtained by removing a leaf picked uniformly at in $T_{n+1}$, as well as the adjacent edge, then $(T_n,T_{n+1})$ is distributed as $(T^{\circ}_n,T_{n+1})$, then one can establish under suitable conditions the \emph{convergence in probability} of the rescaled trees. See \cite{HMPW08} for details.

\subsection{A Markov chain in the Markov--Branching sequence of trees}
\label{sec:Markov}

Consider $(T_n,n\geq 1)$  a MB--sequence of trees indexed by the leaves, with splitting distribution $(q_n,n\geq 1)$. Before studying the scaling limit of the trees in their whole, we start by studying the scaling limit of a \emph{typical leaf}. I.e., in each $T_n$, we mark one of the $n$ leaves uniformly at random and we want to determine how the height of the marked leaf behaves as $n \rightarrow \infty$. In that aim, let  $\star_n$ denote this marked leaf and  $\star_n(k)$ denote its ancestor at generation $k$, $0\leq k \leq n$ (so that $\star_n(0)$ is the root of $T_n$ and $\star_n({\mathrm{ht}(\star_n)})=\star_n$). Let also $T^{\star}_n(k)$ be the subtree composed by the descendants of $\star_n(k)$ in $T_n$, formally,
$$
T^{\star}_n(k):=\left\{v \in T_n : \star_n(k)\in [[\rho,v]]\right\}, \quad k \leq \mathrm{ht}(\star)
$$
and $T^{\star}_n(k):=\emptyset$ if $k>\mathrm{ht}(\star)$. We then set
\begin{equation}
\label{aMC}
X_n(k):=\#\left\{\text{leaves of }T^{\star}_n(k)\right\}, \quad \forall k \in \mathbb Z_+
\end{equation}
with the convention that $X_n(k)=0$ for $k >\mathrm{ht}(\star)$.

\begin{figure}
\begin{center}
\includegraphics[height=2.7cm]{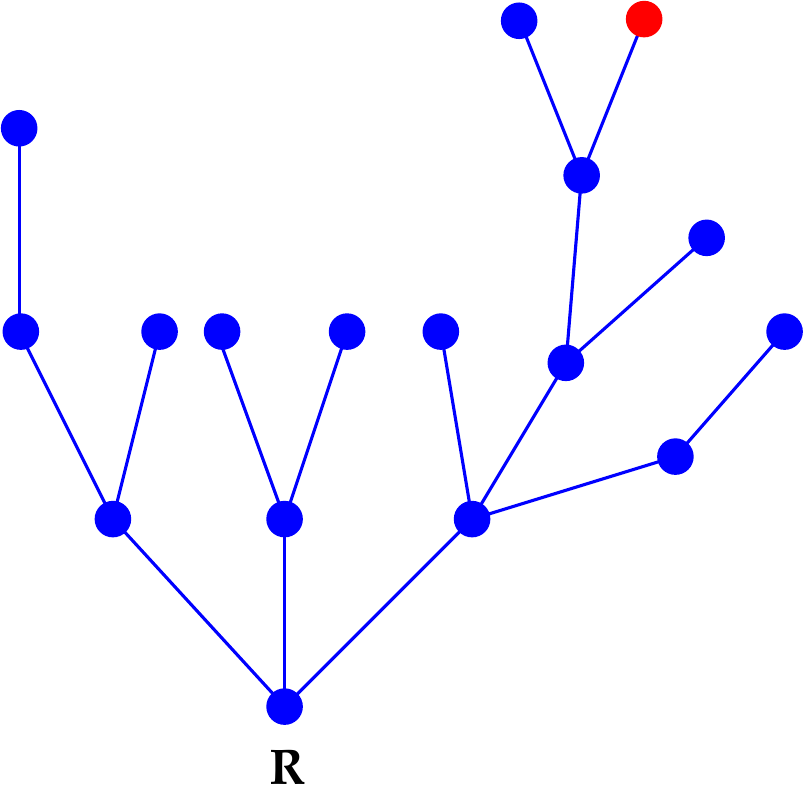}
\hspace{0.15cm}
\includegraphics[height=2.7cm]{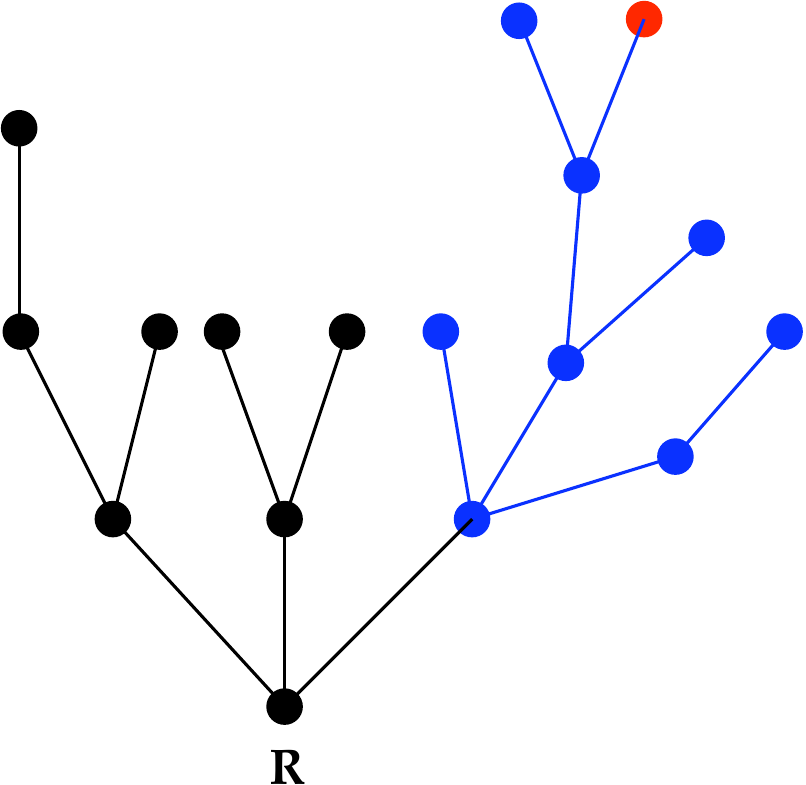}
\hspace{0.15cm}
\includegraphics[height=2.7cm]{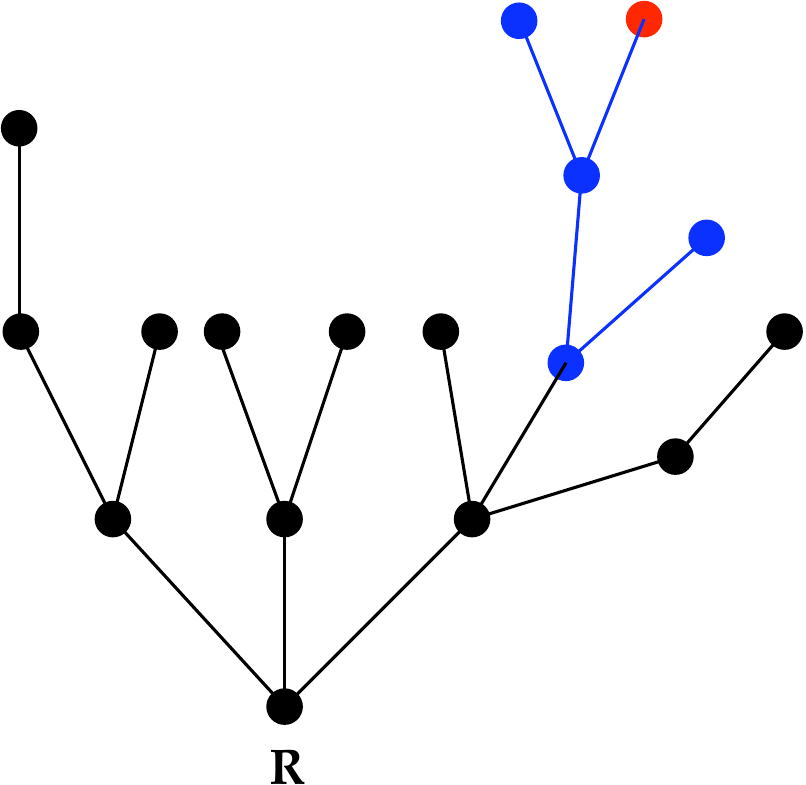}
\hspace{0.15cm}
\includegraphics[height=2.7cm]{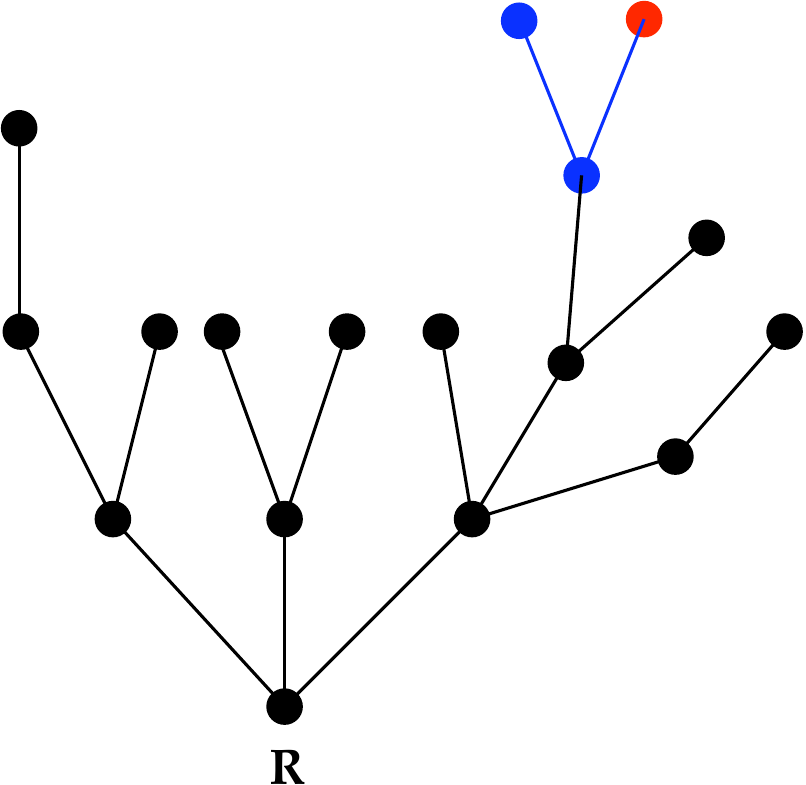}
\hspace{0.15cm}
\includegraphics[height=2.7cm]{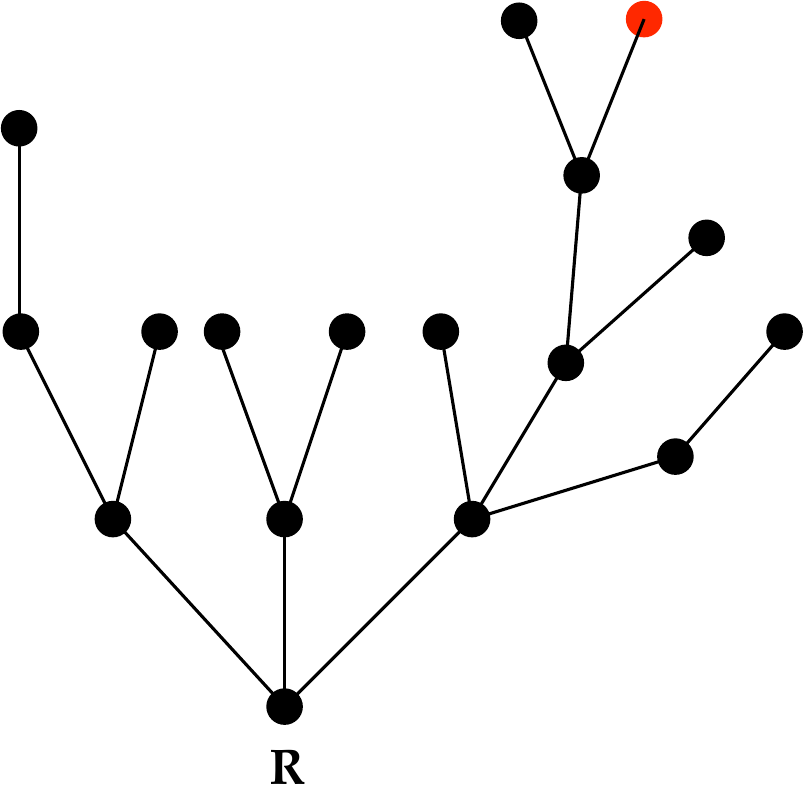}
\hspace{0.15cm}
\caption{A Markov chain in the Markov--Branching trees: here, $n=9$ and $X_9(0)=9$, $X_9(1)=5$, $X_9(2)=3$, $X_9(3)=2$, $X_9(4)=1$ and $X_9(i)=0, \forall i \geq 5$.}
\end{center}
\end{figure}

\begin{prop}
The process $(X_n(k),k \geq 0)$ is a $\mathbb Z_+$--valued non--increasing Markov chain starting from $X_n(0)=n$, with transition probabilities 
\begin{equation}
\label{transitionproba}
p(i,j)=\sum_{\lambda \in \mathcal P_i}q_i(\lambda)m_ j(\lambda)\frac{j}{i} \quad \text{ for all } 1\leq j \leq i, \text{ with }i \geq 1
\end{equation}
and $p(1,0)=q_1(\emptyset)=1-p(1,1)$.
\end{prop}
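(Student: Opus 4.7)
The plan is to establish simultaneously the Markov property and the transition formula \eqref{transitionproba} by identifying the correct conditional distribution of the subtree above the marked ancestor at level $k$.

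The central claim, which I would prove by induction on $k \geq 0$, is the following \emph{conditional distribution lemma}: conditionally on the sigma-field $\mathcal{F}_k$ generated by the ancestral path $\star_n(0),\ldots,\star_n(k)$ together with all subtrees of $T_n$ hanging off of this path up to height $k$, and on the event $\{X_n(k)=i\}$, the pair $\bigl(T^\star_n(k),\star_n\bigr)$ has the law of $\mathcal{L}^{\mathbf q}_i$ equipped with a leaf chosen uniformly at random among its $i$ leaves. The case $k=0$ is the definition of $T_n$ together with the uniform sampling of $\star_n$. For the induction step, apply the MB--construction to $T^\star_n(k)\sim \mathcal{L}^{\mathbf q}_i$: it is obtained by drawing a partition $\Lambda\sim q_i$ and then gluing independent subtrees $T_1,\ldots,T_{p(\Lambda)}$ with $T_s\sim \mathcal{L}^{\mathbf q}_{\Lambda_s}$. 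Since $\star_n$ was uniform among the $i$ leaves of $T^\star_n(k)$ and the leaves are (unordered, hence) exchangeable, conditionally on $\{\Lambda=\lambda\}$ and on $\star_n\in T_s$, the marked leaf is uniform among the $\lambda_s$ leaves of $T_s$. The subtree $T^\star_n(k+1)$ is precisely this $T_s$, which proves the lemma at level $k+1$.

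Given this lemma, the Markov property is immediate: the conditional law of $X_n(k+1)$ given $\mathcal{F}_k$ on $\{X_n(k)=i\}$ depends only on $i$. For the transition probabilities when $i\geq 2$, conditionally on $\Lambda=\lambda=(\lambda_1,\ldots,\lambda_p)$, the marked leaf lies in the subtree $T_s$ with probability $\lambda_s/i$ by uniform sampling, so
\[
\mathbb P\bigl(X_n(k+1)=j \,\big|\, X_n(k)=i,\, \Lambda=\lambda\bigr) \;=\; \sum_{s\,:\,\lambda_s=j} \frac{\lambda_s}{i} \;=\; m_j(\lambda)\,\frac{j}{i},
\]
and averaging over $\Lambda\sim q_i$ yields the formula for $p(i,j)$. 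The monotonicity $X_n(k+1)\leq X_n(k)$ is clear since every part $\lambda_s$ is at most $i$, with equality only if $\lambda=(i)$ (which is allowed and accounts for $p(i,i)=q_i((i))$). The base case $i=1$ needs a separate but straightforward argument: by the lemma, $T^\star_n(k)$ is then distributed as a line--tree of length $G\sim\mathrm{Geo}(q_1(\emptyset))$, its unique leaf being $\star_n$; by the memoryless property of the geometric distribution, conditionally on reaching state $1$ at height $k$ (i.e.\ on $\{G\geq 1\}$ in the current subtree), the residual height is again $\mathrm{Geo}(q_1(\emptyset))$. Hence $p(1,0)=q_1(\emptyset)$ and $p(1,1)=1-q_1(\emptyset)=q_1((1))$, as stated.

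The one genuine point of care, and in my view the main obstacle, is the exchangeability step in the induction: one must check that uniformly sampling among the $i$ indistinguishable leaves of an unordered tree $\mathcal{L}^{\mathbf q}_i$ is compatible with the unordered gluing $\langle T_1,\ldots,T_{p(\Lambda)}\rangle$ in the sense that the marked leaf, conditioned to fall in a part of size $\lambda_s$, is uniform in that part. This is where the choice of working with unordered trees (and of $q_i$ being a measure on unordered partitions $\mathcal P_i$) really pays off; otherwise one would need to track symmetry factors $p!/\prod_i m_i(\lambda)!$ explicitly.
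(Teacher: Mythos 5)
Your proof is correct and follows essentially the same route as the paper: both arguments rest on the observation that, given its number of leaves $i$, the subtree $T^\star_n(k)$ containing the marked leaf is distributed as a fresh MB tree $\mathcal L^{\mathbf q}_i$ with a uniformly chosen leaf, from which the Markov property and the formula $p(i,j)=\sum_{\lambda}q_i(\lambda)\,m_j(\lambda)\,j/i$ follow by conditioning on the partition at the root. Your explicit induction, the remark on exchangeability of parts of equal size, and the separate treatment of the state $i=1$ via the geometric law are just slightly more detailed renderings of steps the paper treats as immediate.
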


\begin{proof}
The Markov property is a direct consequence of the Markov branching property. Indeed, given $X_n(1)=i_1,\ldots,X_n(k-1)=i_{k-1}$, the tree $T^{\star}_n(k-1)$ is distributed as $T_{i_{k-1}}$ if $i_{k-1} \geq 1$ and is the emptyset otherwise. In particular, when $i_{k-1}=0$, the conditional distribution of $X_{n}(k)$ is the Dirac mass at 0. When $i_{k-1}\geq 1$, we use the fact that $\star_n$ is in $T^{\star}_n(k-1)$, hence, still conditioning on the same event,  we have that $\star_n$ is uniformly distributed amongst the $i_{k-1}$ leaves of $T_{i_{k-1}}$. Otherwise said, given $X_n(1)=i_1,\ldots,X_n(k-1)=i_{k-1}$ with $i_{k-1} \geq 1$, $(T^{\star}_n(k-1), \star_n)$ is distributed as $(T_{i_{k-1}},\star_{i_{k-1}})$ and consequently $X_n(k)$ is distributed as $X_{i_{k-1}}(1)$. Hence the Markov property of the chain $(X_n(k),k \geq 0)$.
It remains to compute the transition probabilities:
$$
p(n,k)=\mathbb P(X_n(1)=k)=\sum_{\lambda \in \mathcal P_n} q_n(\lambda) \mathbb P\left(X_n(1)=k | \Lambda_n=\lambda\right)
$$
where $\Lambda_n$ denotes the partition of $n$ corresponding to the distribution of the leaves in the subtrees of $T_n$ above the root. Since $\star_n$ is chosen uniformly amongst the set of leaves, we clearly have that
$$
\mathbb P\left(X_n(1)=k | \Lambda_n=\lambda\right)=\frac{k}{n} \times \#\{j:\lambda_j=k\}, \quad \forall k \geq 1.
$$
\end{proof}

Hence studying the scaling limit of  the height of the marked leaf in the tree $T_n$ reduces to studying the scaling limit of the absorption time $A_n$ of the Markov chain 
$(X_n(k),k \geq 0)$ at 0:
$$
A_n:=\inf \big\{k\geq 0:X_n(k)=0 \big\}
$$
(to be precise, this absorption time is equal to the height of the marked leaf $+1$). The study of the scaling limit of $\left((X_n(k),k \geq 1),A_n\right)$ as $n \rightarrow \infty$ is the goal of the next section. Before getting in there, let us notice that the Hypothesis $(\mathrm H)$ on the splitting probabilities $(q_n,n\geq 1)$ of $(T_n,n\geq 1)$, together with (\ref{transitionproba}), implies the following behavior of the transition probabilities $(p(n,k),k\leq n)$:
\begin{equation}
\label{cvprobaheight}
n^{\gamma} \sum_{k=0}^n p(n,k) \left(1-\frac{k}{n}\right)g\left(\frac{k}{n}\right) \underset{n \rightarrow \infty}\longrightarrow \int_{[0,1]} g(x) \mu (\mathrm dx)
\end{equation}
for all continuous functions $g:[0,1] \rightarrow \mathbb R$, where the measure $\mu$ in the limit is a finite, non--zero measure on $[0,1]$ defined by 
\begin{equation}
\label{defmuheight}
\int_{[0,1]} g(x) \mu (\mathrm dx)= \int_{\mathcal S^{\downarrow}} \sum_{i \geq 1}s_i (1-s_i)g(s_i) \nu(\mathrm d \mathbf s).
\end{equation}
To see this, apply $(\mathrm H)$ to the continuous function defined by
$$
f(\mathbf s)=\frac{\sum_{i\geq 1} s_i(1-s_i) g(s_i)}{1-s_1} \quad \text{for }\mathbf s \neq (1,0,\ldots)
$$ 
and $f(1,0,\ldots)=g(1)+g(0)$.

\subsection{Scaling limits of non--increasing Markov chains}
\label{sec:limitMarkov}

As discussed in the previous section, studying the height of a typical leaf in MB--trees amounts to studying the absorption time at 0 of a $\mathbb Z_+$--valued  non-increasing Markov chain. In this section, we study in a general framework the scaling limits of $\mathbb Z_+$--valued  non--increasing Markov chains, under appropriate assumptions on the transition probabilities. At the end of the section we will see how this applies to the height of a typical leaf in a MB--sequence.
In the following,
$$
\left(X_n(k),k \geq 0\right)
$$
denotes a non--increasing $\mathbb Z_+$--valued  Markov chain starting from $n$ ($X_n(0)=n$), with transition probabilities $(p(i,j), 0 \leq j \leq i)$ such that

\medskip

\begin{mybox3}
\textbf{Hypothesis ($\mathsf H'$)}: $\exists$ \textcolor{black}{$\gamma>0$} and \textcolor{black}{$\mu$} a non--trivial \textcolor{black}{finite measure}  on $[0,1]$ such that
$$
n^{\gamma} \sum_{k=0}^n p(n,k) \left(1-\frac{k}{n}\right)f\left(\frac{k}{n}\right) \underset{n \rightarrow \infty}{\longrightarrow} \int_{[0,1]} f(x) \mu (\mathrm dx)
$$
for all continuous functions $f:[0,1] \rightarrow \mathbb R.$
\end{mybox3}

\vspace{-0.2cm}

This hypothesis implies that starting from $n$, \emph{macroscopic jumps} (i.e. with size proportional to $n$) are rare, since for a.e. $0<\varepsilon \leq 1$, the probability to do a jump larger than $\varepsilon n$ is of order $c_{\varepsilon}n^{-\gamma}$ where 
$c_{\varepsilon}= \int_{[0,1-\varepsilon]} (1-x)^{-1}\mu (\mathrm dx)$
(note that this may tend to $\infty$ when $\varepsilon$ tends to 0). 

Now, let
$$
A_n:=\inf\big\{k \geq 0: X_n(i)=X_n(k), \quad \forall i \geq k\big\}
$$
be the first time at which the chain enters an absorption state (note that $A_n<\infty$ a.s. since the chain is non-increasing and $\mathbb Z_+$--valued). In the next theorem, $\mathbb D([0,\infty),[0,\infty))$ denotes the set of non--negative càdlàg processes, endowed with the Skorokhod topology. 

\begin{thm}[\cite{HM11}]
\label{cvMC}
Assume $(\mathsf{H'})$.

\emph{(i)} Then, in $\mathbb D([0,\infty),[0,\infty))$,
$$
\left( \frac{X_n\left(\lfloor n^{\gamma}t \rfloor \right)}{n}, t \geq 0 \right) \overset{\mathrm{(d)}}{\underset{n \rightarrow \infty} \longrightarrow} \left(\exp(-\xi_{\tau(t)}),t \geq 0 \right),
$$
where 
$\xi$ is a subordinator, i.e. a non--decreasing Lévy process, and $\tau$ the time--change (acceleration of time) $$\tau(t):=\inf \left \{u \geq 0 : \int_0^u \exp(-\gamma \xi_r) \mathrm dr \geq t\right\}, t \geq 0.$$
The distribution of $\xi$ is characterized by its Laplace transform $\mathbb E[\exp(-\lambda \xi_t)]=\exp(-t \phi(\lambda))$,  with
$$
{\phi(\lambda)=\mu(\{0\})}+\mu(\{1\}) \lambda + \int_{(0,1)}(1-x^{\lambda})\frac{\mu(\mathrm dx)}{1-x}, \ \ \lambda \geq 0.
$$

\emph{(ii)} Moreover, jointly with the above convergence,  
$$
\frac{A_n}{n^{\gamma}} \overset{\mathrm{(d)}}{\underset{n \rightarrow \infty} \longrightarrow} \int_0^{\infty}\exp(-\gamma \xi_r) \mathrm dr=\inf \left\{t \geq 0: \exp(-\xi_{\tau(t)})=0 \right\}.
$$
\end{thm}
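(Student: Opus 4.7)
The plan is to work in logarithmic coordinates and view the rescaled chain $Z_n(t):=X_n(\lfloor n^\gamma t\rfloor)/n$ as a discrete approximation of a positive self--similar Markov process of index $\gamma$ killed at $0$, whose Lamperti representation is precisely $\exp(-\xi_{\tau(t)})$. The algebraic identity that will link the two pictures is
$$\int_0^t e^{\gamma \xi_{\tau(s)}}\,ds \;=\; \tau(t),$$
obtained by differentiating $\int_0^{\tau(t)}e^{-\gamma\xi_r}dr=t$ (the defining equation of $\tau$) in $t$; in terms of $Z(t):=\exp(-\xi_{\tau(t)})$ this reads $\int_0^t Z(s)^{-\gamma}ds=\tau(t)$. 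I would prove (i) first and then deduce (ii) as the hitting time of $0$.

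The first step is a one-step Laplace computation. Writing $\psi_i(\lambda):=1-\sum_{j=0}^i p(i,j)(j/i)^\lambda$ for $\lambda\geq 0$, one has $\mathbb E[(X_n(k+1)/n)^\lambda\mid X_n(k)=i]=(i/n)^\lambda(1-\psi_i(\lambda))$. The function $f_\lambda(x):=(1-x^\lambda)/(1-x)$ extends continuously to $[0,1]$ with $f_\lambda(0)=1$ and $f_\lambda(1)=\lambda$, so $(\mathsf H')$ applied to $f_\lambda$ gives
$$i^\gamma\psi_i(\lambda) \;=\; i^\gamma\sum_{j=0}^i p(i,j)\bigl(1-\tfrac{j}{i}\bigr)f_\lambda\bigl(\tfrac{j}{i}\bigr) \;\xrightarrow[i\to\infty]{}\; \int_{[0,1]}\!\frac{1-x^\lambda}{1-x}\mu(dx) \;=\; \phi(\lambda),$$
uniformly in $\lambda$ on compacts, which identifies $\phi$ as the Laplace exponent of the prospective limit subordinator and matches the formula in the statement.

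The second step is a semigroup estimate: iterating the one-step identity via the Markov property yields
$$\mathbb E[Z_n(t)^\lambda] \;=\; \mathbb E\!\left[\prod_{j=0}^{\lfloor n^\gamma t\rfloor-1}\!\bigl(1-\psi_{X_n(j)}(\lambda)\bigr)\right] \;\approx\; \mathbb E\!\left[\exp\!\Bigl(-\phi(\lambda)\!\int_0^t Z_n(s)^{-\gamma}\,ds\Bigr)\right],$$
the approximation coming from $\log(1-x)\approx -x$ combined with $i^\gamma\psi_i(\lambda)\approx\phi(\lambda)$. The main analytic obstacle is to justify this uniformly, since the factor $Z_n(s)^{-\gamma}$ blows up near absorption; I would handle it by truncating at the first time $Z_n$ drops below a small threshold $\varepsilon$ (past which $(X_n(k)/n)^\lambda\leq\varepsilon^\lambda$ is negligible) and using the uniform bound $\psi_i(\lambda)\leq C(\lambda,\varepsilon) i^{-\gamma}$ on $\{i\geq\varepsilon n\}$ supplied by $(\mathsf H')$. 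Tightness of $(Z_n)$ in the Skorokhod space $\mathbb D([0,\infty),[0,\infty))$ follows from Aldous' criterion, because $(\mathsf H')$ gives that the probability of a single jump of size $\geq\varepsilon$ at one step is at most $c_\varepsilon n^{-\gamma}$ with $c_\varepsilon:=\int_{[0,1-\varepsilon]}(1-x)^{-1}\mu(dx)$, so such jumps only accumulate at rate $O(1)$ on the time scale $n^\gamma$. Along any subsequential limit $Z$, the passage to the limit then yields
$$\mathbb E[Z(t)^\lambda] \;=\; \mathbb E\!\left[\exp\!\Bigl(-\phi(\lambda)\!\int_0^t Z(s)^{-\gamma}\,ds\Bigr)\right],$$
and the analogous equation for finite-dimensional Laplace transforms $(\lambda_1,t_1;\dots;\lambda_m,t_m)$ comes from iterated conditioning.

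To identify the limit, I would apply the exponential martingale $u\mapsto\exp(-\lambda\xi_u+u\phi(\lambda))$ at the (locally bounded) stopping time $\tau(t)$, and invoke the identity $\tau(t)=\int_0^t\tilde Z(s)^{-\gamma}ds$ from the preamble to see that the Lamperti process $\tilde Z(t):=\exp(-\xi_{\tau(t)})$ satisfies exactly the same Laplace equation as $Z$; the multi-time version then determines the law, forcing $Z\stackrel{d}{=}\tilde Z$ and establishing (i). For (ii), $A_n/n^\gamma$ is the absorption time of $Z_n$, a functional which is continuous in the Skorokhod topology at any non-increasing càdlàg path that eventually reaches $0$; combining (i) with the identity $\inf\{t:\tilde Z(t)=0\}=\inf\{t:\tau(t)=\infty\}=\int_0^\infty e^{-\gamma\xi_r}dr$ yields the joint convergence stated. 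Finiteness of that integral a.s.\ is forced by $\phi>0$ on $(0,\infty)$ when $\mu\neq 0$, which makes $\xi$ either drift to $+\infty$ or be killed in finite time, so that $e^{-\gamma\xi_r}$ is integrable over $[0,\infty)$.
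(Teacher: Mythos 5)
For part (i) your strategy is essentially the one the paper (and \cite{HM11}) follows: the same one--step asymptotic $1-G_i(\lambda)=\psi_i(\lambda)\sim i^{-\gamma}\phi(\lambda)$ extracted from $(\mathsf H')$ applied to $f_\lambda(x)=(1-x^\lambda)/(1-x)$, the same Aldous tightness via the $c_\varepsilon n^{-\gamma}$ bound on macroscopic jumps, and the same martingale/Laplace identification. The only organizational difference is the order of operations: the paper performs the Lamperti time change at the \emph{discrete} level first (passing to $Z_n=Y_n\circ\tau_{Y_n}$), so that the limiting martingale is $(Z')^\lambda e^{\phi(\lambda)t}$ with a deterministic compensator, from which independence and stationarity of the increments of $-\ln Z'$ follow directly. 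You keep the original clock and arrive at the implicit relation $\mathbb E[Z(t)^\lambda]=\mathbb E[\exp(-\phi(\lambda)\int_0^t Z(s)^{-\gamma}ds)]$; your assertion that ``the multi-time version then determines the law'' is exactly the point that needs an argument, since $Z$ appears on both sides. The standard way to close it is to upgrade the multi-time version to the statement that $Z(t)^\lambda\exp(\phi(\lambda)\int_0^t Z(s)^{-\gamma}ds)$ is a martingale and then invert the time change --- i.e.\ to do what the paper does, just in the limit rather than at the discrete level. So (i) is correct in substance but that uniqueness step should be spelled out.

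Part (ii) contains a genuine gap. The absorption-time functional is \emph{not} continuous on $\mathbb D([0,\infty),[0,\infty))$ at non-increasing c\`adl\`ag paths that reach $0$, and the paper explicitly warns about this. For a counterexample, take $f_n(t)=\mathbbm 1_{\{t<1\}}+n^{-1}\mathbbm 1_{\{1\leq t<2\}}$: these converge uniformly (hence in the Skorokhod sense) to $f=\mathbbm 1_{\{t<1\}}$, which is non-increasing and hits $0$ at time $1$, yet the absorption times are $2$ for every $n$ and $1$ for the limit. The probabilistic manifestation is that the chain $X_n$ could in principle linger for a time $\gg n^{\gamma}$ at states that are $o(n)$ but nonzero, which is invisible in the rescaled path $Z_n$ but contributes to $A_n$. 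Ruling this out requires a quantitative estimate that is not a consequence of (i) --- typically a uniform bound such as $\mathbb E[A_m]\leq C m^{\gamma}$ for all $m$, obtained from $(\mathsf H')$ by a separate induction/recursion argument, which then controls $\mathbb E[A_n-\inf\{k:X_n(k)\leq\varepsilon n\}]\leq C(\varepsilon n)^\gamma$ uniformly and lets $\varepsilon\to 0$. This is precisely the content of \cite[Section 4.3]{HM11}, and your proposal is missing it.
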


\smallskip

\textbf{Comments.} For background on Lévy processes, we refer to \cite{BertoinLevy}. Let us simply recall here that the law of a subordinator is characterized by three parameters: a measure on $(0,\infty)$ that codes its jumps (which here is the push--forward of $\mu(\mathrm dx)(1-x)\mathbbm 1_{\{x \in (0,1)\}}$ by the application $x \mapsto -\ln(x)$), a linear drift (here $\mu(\{1\})$) and a killing rate at which the process jumps to $+\infty$ (here $\mu(\{0\})$).

\bigskip

\textbf{Main ideas of the proof of Theorem \ref{cvMC}}. (i) Let $Y_n(t): = n ^{-1}X_n( \lfloor n^{\gamma}t \rfloor)$, for $t\geq 0,n\in \mathbb N$. First, using Aldous' tightness criterion \cite[Theorem 16.10]{Bill99} and $(\mathsf H')$, one can check that the sequence $(Y_n,n \geq 1)$ is tight. 
It is then sufficient to prove that every possible limit in distribution of subsequences of $(Y_n)$ are distributed as $\exp(-\xi_{\tau})$. 
Let \textcolor{black}{$Y'$} be such a limit and  $(n_k, k\geq 1)$ a sequence such that $Y_{n_k}$ converges to $Y'$ in distribution.  In the limit, we actually prefer to deal with $\xi$ than with $\xi_{\tau}$, and for this reason we start by changing time in $Y_n$ by setting 
$$\tau_{Y_n}(t):=\inf \left\{u \geq 0:\int_0^u Y_n^{-\gamma}(r) \mathrm dr>t \right\}
\quad \text{and}  \quad Z_n(t):=Y_n\left(\tau_{Y_n}(t)\right), \quad t \geq 0.
$$
One can then easily check that $(Z_{n_k})$ converges in distribution to $Z'$ where $Z' =Y'\circ \tau_{Y'}$, with $\tau_{Y'}(t):=\inf \big\{u \geq 0:\int_0^u (Y'(r))^{-\gamma} \mathrm dr>t \big\}$. It is also easy to reverse the time--change and get that
$$Y'(t)=Z'\big(\tau^{-1}_{Y'}(t)\big) \ =Z'\left(\inf \left\{u \geq 0:\int_0^u Z'^{\gamma}(r) \mathrm dr>t \right\}\right), \quad t \geq 0.$$
With this last equality, we see that it just remains to prove that $Z'$ is distributed as $\exp(-\xi)$. This can be done in three steps:
\vspace{-0.1cm}
\begin{enumerate}
\item[] (a) Observe the following (easy!) fact: if $P$ is the transition function of a Markov chain $M$ with countable state space $\subset \mathbb R$, then for  any positive function $f$ such that $f^{-1}(\{0\})$ is absorbing,
$$
f(M(k))\prod_{i=0}^{k-1}\frac{f(M(i))}{Pf(M(i))}, \quad  k\geq 0
$$
is a martingale. As a consequence: for all $\lambda \geq 0$ and $n \geq 1$, if we let $G_n(\lambda):=\mathbb E\left[(X_n(1)/n)^{\lambda}\right]$, then,
$$
\textcolor{black}{\textstyle M^{(\lambda)}_n(t)}:=Z^{\lambda}_n(t) \left( \prod_{i=0} ^{\lfloor  n^{\gamma}\tau_{Y_n}(t)   \rfloor-1} G_{X_n(i)}(\lambda)\right)^{-1}, \quad t \geq 0
$$
is a martingale.

\item[] (b) Under $(\mathsf H')$, $1-G_n(\lambda) \underset{n \rightarrow \infty}{\sim} n^{-\gamma} \phi(\lambda)$. Together with the convergence in distribution of $(Z_{n_k})$ to $Z'$ and the definition of $M^{(\lambda)}_n$, this leads to the convergence  (this is the most technical part)
$$M^{(\lambda)}_{n_k} \overset{\mathrm{(d)}}{\underset{k \rightarrow \infty} \longrightarrow} (Z')^{\lambda} \exp(\phi(\lambda) \cdot),$$ and the martingale property passes to the limit.

\item[] (c) Hence $(Z')^{\lambda}\exp(\phi(\lambda) \cdot)$ is a martingale for all $\lambda \geq 0$. Using Laplace transforms, it is then easy to see that this implies in turn that $-\ln Z'$ is a non--decreasing process with independent and stationary increments (hence a subordinator), with Laplace exponent $\phi$.
\end{enumerate}
Hence $Z'\overset{\mathrm{(d)}}=\exp(-\xi)$. 

(ii) We do not detail this part and refer to \cite[Section 4.3]{HM11}. Let us  simply point out that it is \emph{not} a direct consequence of the convergence of $(Y_n)$ to $\exp(-\xi_{\tau})$ since
convergence of functions in $\mathbb D([0,\infty),[0,\infty))$ does not lead, in general, to the convergence of their absorption
times (when they exist). $\hfill \square$

\bigskip

This result leads to the following corollary. 

\begin{cor}
\label{corocvMC}
Let $(T_n,n\geq 1)$ be a \emph{MB}-sequence indexed by the leaves, with splitting probabilities satisfying $\mathsf{(H)}$ with parameters $(\gamma,\nu)$. For each $n$, let $\star_{n}$ be a leaf chosen uniformly amongst the $n$ leaves of $T_n$. Then,
$$
\frac{\mathsf{ht}(\star_{n})}{n^{\gamma}} \overset{\mathrm{(d)}}{\underset{n \rightarrow \infty} \longrightarrow} \int_0^{\infty} \exp(-\gamma \xi_r) \mathrm dr
$$
where $\xi$ is a subordinator with Laplace exponent $\phi(\lambda)=\int_{\mathcal S^{\downarrow}} \sum_{i \geq 1}\left(1-s_i^{\lambda}\right)s_i \nu(\mathrm d \mathbf s), \lambda \geq 0$.
\end{cor}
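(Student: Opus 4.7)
The plan is to combine two previously established facts: the reduction of the height of a uniformly chosen leaf to the absorption time of the non--increasing Markov chain $(X_n(k), k\ge 0)$ constructed in Section \ref{sec:Markov}, together with the general scaling limit for such chains given in Theorem \ref{cvMC}. First I would observe that, since $\star_n$ is its own ancestor at generation $\mathsf{ht}(\star_n)$ and is a leaf, one has $X_n(\mathsf{ht}(\star_n)) = 1$ and $X_n(\mathsf{ht}(\star_n)+1) = 0$; moreover, since $q_1(\emptyset) > 0$ (as $q_1((1))<1$) and $q_i((i)) < 1$ for every $i \ge 2$, the state $0$ is the unique absorbing state of the chain. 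Consequently the absorption time of Section \ref{sec:limitMarkov} satisfies $A_n = \mathsf{ht}(\star_n) + 1$, and it suffices to show that $A_n / n^\gamma$ converges in distribution to $\int_0^\infty e^{-\gamma \xi_r}\, dr$ with the announced subordinator $\xi$, the additive $1$ being negligible after division by $n^\gamma$.

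Next I would verify that the transition probabilities $(p(n,k))$ of this chain satisfy hypothesis $\mathsf{(H')}$, with the same index $\gamma$ and with the finite non--trivial measure $\mu$ on $[0,1]$ defined by the push--forward
\[
\int_{[0,1]} g(x)\, \mu(dx) = \int_{\mathcal S^\downarrow} \sum_{i \ge 1} s_i(1-s_i) g(s_i)\, \nu(d\mathbf s).
\]
This is exactly the content of \eqref{cvprobaheight}: one applies $\mathsf{(H)}$ to the function $f(\mathbf s) = \sum_{i \ge 1} s_i(1-s_i) g(s_i)/(1-s_1)$, extended at $(1,0,\ldots)$ by $g(1) + g(0)$, a limit value that one checks to be independent of the direction of approach. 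Finiteness of $\mu$ follows from the bound $\sum_i s_i(1-s_i) \le 1 - s_1^2 \le 2(1-s_1)$ together with the integrability $\int (1-s_1)\, \nu(d\mathbf s) < \infty$, and non--triviality from $\nu \not\equiv 0$ and $\nu(\{(1,0,\ldots)\}) = 0$. Theorem \ref{cvMC}(ii) then yields $A_n / n^\gamma \overset{\mathrm{(d)}}{\longrightarrow} \int_0^\infty e^{-\gamma \xi_r}\, dr$ for a subordinator $\xi$ with Laplace exponent
\[
\phi(\lambda) = \mu(\{0\}) + \mu(\{1\})\lambda + \int_{(0,1)}(1-x^\lambda) \frac{\mu(dx)}{1-x}.
\]

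The last step is to rewrite $\phi$ in the form displayed in the statement. Since the weight $s_i(1-s_i)$ vanishes at $s_i \in \{0,1\}$, we have $\mu(\{0\}) = \mu(\{1\}) = 0$. Applying the push--forward identity to the bounded continuous function $g(x) = (1-x^\lambda)/(1-x)$ on $(0,1)$ and noting that $s_i(1-s_i^\lambda) = 0$ when $s_i \in \{0,1\}$, the three terms in the formula for $\phi$ collapse into
\[
\phi(\lambda) = \int_{\mathcal S^\downarrow} \sum_{i \ge 1} (1-s_i^\lambda) s_i\, \nu(d\mathbf s),
\]
which is the announced formula. The main technical point of the plan is the derivation of $\mathsf{(H')}$ from $\mathsf{(H)}$, since the auxiliary function $f$ has an apparent $0/0$ singularity at $(1,0,\ldots)$ that must be resolved by continuity; once this verification is carried out, the rest of the argument is a direct invocation of Theorem \ref{cvMC} together with a bookkeeping identification of the Laplace exponent.
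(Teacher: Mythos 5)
Your proposal is correct and follows exactly the paper's own route: derive $\mathsf{(H')}$ for the chain $(X_n(k))$ from $\mathsf{(H)}$ via the auxiliary function $f(\mathbf s)=\sum_i s_i(1-s_i)g(s_i)/(1-s_1)$ (this is precisely \eqref{cvprobaheight}--\eqref{defmuheight}), then invoke Theorem \ref{cvMC}(ii) and identify $A_n=\mathsf{ht}(\star_n)+1$. The extra bookkeeping you supply (finiteness of $\mu$, the vanishing of $\mu(\{0\})$ and $\mu(\{1\})$, and the collapse of the three terms of the Laplace exponent into the stated formula) is accurate and merely makes explicit what the paper leaves implicit.
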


\begin{proof} 
As seen at the end of the previous section, under ($\mathsf H$) the transition probabilities of the Markov chain (\ref{aMC}) satisfy assumption $(\mathsf{H'})$ with parameters $\gamma$ and $\mu$, with $\mu$ defined by (\ref{defmuheight}). 
The conclusion follows with Theorem \ref{cvMC} (ii).
\end{proof}

\medskip

\textbf{Further reading.} Apart from applications to Markov--Branching trees, Theorem \ref{cvMC} can be used to describe the scaling limits  of various stochastic processes, e.g.
random walks with a barrier or the number of collisions in $\Lambda$--coalescent processes, see \cite{HM11}. Recently, Bertoin and Kortchemski \cite{BK15} set up results similar to Theorem \ref{cvMC} for \emph{non--monotone} Markov chains and develop several applications, to random walks conditioned to stay positive, to the number of particles in some coagulation--fragmentations processes, to random planar maps (see \cite{BCK15} for this last point). Also in \cite{HSMAP} (in preparation) similar convergences for bivariate Markov chains towards time--changed Markov additive processes are studied. This will have applications to dynamical models of tree growth in a broader context than the one presented in Section \ref{sec:cvdynam}, and more generally to multi--type MB--trees.

\subsection{Self--similar fragmentation trees}
\label{sec:fragtrees}

Self--similar fragmentation trees are random compact measured real trees that describe the genealogical structure of self--similar fragmentation processes with a negative index. It turns out that this set of trees is closely related to the set of trees arising as scaling limits of MB--trees. We start by introducing the self--similar fragmentation processes, following Bertoin \cite{BertoinBook}, and then turn to the description of their genealogical trees, which were first introduced in \cite{HM04} and then in \cite{Steph13} in a broader context.

\subsubsection{Self--similar fragmentation processes}

Fragmentation processes are continuous--time processes that describe the evolution of an object that splits repeatedly and randomly as time passes. In the models we are interested in, the fragments are characterized by their mass alone, other characteristics, such as their shape, do not come into account. Many researchers have been working on such models. From a historical perspective, it seems that Kolmogorov \cite{Kolmo41} was the first in 1941. Since the early 2000s, there has been a full treatment of fragmentation processes satisfying a self--similarity property. We refer to Bertoin's book \cite{BertoinBook} for an overview of work in this area and a deepening of the results presented here. 

We will work on the space of masses
$$\textcolor{black}{\mathcal S_{-}^{\downarrow}=\left\{\mathbf s=(s_1,s_2,\ldots): s_1 \geq s_2 \geq ... \geq 0 \ \text{and} \ \sum_{i\geq 1} s_i \leq 1\right\}},$$
which contains the set $\mathcal S^{\downarrow}$, and which is equipped with the same metric $d_{\mathcal S^{\downarrow}}$.

\begin{defn}
Let $\alpha \in \mathbb R$.
An $\alpha-$self--similar fragmentation process is a $\mathcal S^{\downarrow}_{-}$--valued Markov process  $(F(t),t\geq 0)$ which is continuous in probability and such that, for all $t_0\geq 0$, given that $F(t_0)=(s_1,s_2,\ldots)$, the process $(F(t_0+t),t \geq 0)$  is distributed as the process $G$ obtained by considering a sequence $(F^{(i)},i\geq 1)$ of i.i.d. copies of $F$ and then defining $G(t)$ to be the decreasing rearrangement of the sequences $s_iF^{(i)}(s_i^{\alpha}t),i\geq 1$, for all $t\geq 0$.
\end{defn}

In the following, we will always consider processes starting from a unique mass equal to 1, i.e. $F(0)=(1,0,\ldots)$. 
At time $t$, the sequence $F(t)$ should be understood as  the decreasing sequence of the masses of fragments present at that time. 

It turns out that such processes indeed exist and that their distributions are characterized by three parameters: the \emph{index of self--similarity} $\alpha \in \mathbb R$, an \emph{erosion coefficient} $c \geq 0$ that codes a continuous melt of the fragment (when $c=0$ there is no erosion) and a \emph{dislocation measure} $\nu$, which is a measure $\nu$ on  $\mathcal S^{\downarrow}_{-}$ such that $\int_{\mathcal S^{\downarrow}_{-}}(1-s_1)\nu(\mathrm d \mathbf s)<\infty$. The role of the parameters $\alpha$ and $\nu$ can be specified as follows when $c=0$ and $\nu$ is finite: then, each fragment with mass $m$ waits a random time with exponential distribution with parameter $\nu(\mathcal S^{\downarrow}_{-})$ and then splits in fragments with masses $m\mathbf S$, where $\mathbf S$ is distributed according to $\nu/\nu(\mathcal S^{\downarrow}_{-})$, independently of the splitting time. When $\nu$ is infinite, the fragments split immediately, see \cite[Chapter3]{BertoinBook} for further details. 

The index $\alpha$ as an enormous influence on the behavior of the process: when $\alpha=0$, all fragments split at the same rate, whereas when $\alpha>0$ fragments with small masses split slower and  when $\alpha<0$ fragments with small masses split faster. In this last case the fragments split so quickly that the whole initial object is reduced to ``dust" in a finite time, almost surely, i.e. $\inf\{t\geq 0:F(t)=(0,\ldots)\}<\infty$ a.s.

\textbf{The tagged fragment process.} We turn to a connection with the results seen in the previous section. Pick a point uniformly at random in the initial object (this object can be seen as an interval of length 1, for example), independently of the evolution of the process and let $F_{*}(t)$ be the mass of the fragment containing this marked point at time $t$. The process $F_{*}(t)$ is non--increasing and more precisely,

\begin{thm}[Bertoin \cite{BertoinBook}, Theorem 3.2 $\&$ Corollary 3.1]
\label{thm:tagged}

The process $F_{*}$ can be written as
$$
F_{*}(t)=\exp(-\xi_{\tau(t)}), \quad \forall t \geq 0,
$$
where $\xi$ is a subordinator with Laplace exponent $$\phi(\lambda)=c+\int_{\mathcal S^{\downarrow}_-}\Big(1-\sum_{i\geq 1} s_i\Big) \nu(\mathrm d \mathbf s)+c\lambda+\int_{\mathcal S^{\downarrow}_-}\sum_{i\geq 1} (1-s_i^{\lambda})s_i \nu(\mathrm d \mathbf s), \quad \lambda \geq 0$$ 
and $\tau$ is a time--change depending on the parameter $\alpha$, $\tau(t)=\inf\left\{u\geq 0:\int_0^u \exp(\alpha \xi_t) \mathrm dr>t\right\}$. 
\end{thm}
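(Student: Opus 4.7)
The plan is to first handle the homogeneous case $\alpha=0$ by exploiting the branching/Markov property of $F$ to show directly that $-\log F_*$ is a (possibly killed) subordinator, identify its Laplace exponent by an infinitesimal calculation, and then reduce the general self-similar case to the homogeneous one through a deterministic pathwise time-change driven by self-similarity.

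For $\alpha=0$, the self-similarity property is a genuine branching property: conditionally on $F(t_0)=(s_1,s_2,\ldots)$, the process $(F(t_0+t))_{t\geq 0}$ is the decreasing rearrangement of the sequences $(s_iF^{(i)}(t))_{t\geq 0}$ for i.i.d.\ copies $F^{(i)}$ of $F$, with no mass-dependent acceleration. Working in Bertoin's interval representation, so that the tagged point is a uniform variable $U\in[0,1]$ independent of $F$, one checks that conditionally on $F_*(t_0)=m$ the normalized tagged fragment $(m^{-1}F_*(t_0+t))_{t\geq 0}$ is independent of $\mathcal F_{t_0}$ and distributed as $F_*$ itself; the key point is the size-bias induced by a uniform marker, namely that $U$ lands in the fragment of mass $s_i$ with probability $s_i$. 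Hence $Y_t:=-\log F_*(t)$ has stationary independent increments; being non-decreasing (fragments only shrink) and stochastically continuous, it is a subordinator, killed when $F_*$ jumps to $0$, and one obtains the representation $F_*(t)=\exp(-\xi_t)$.

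To identify $\phi$ I would analyze the infinitesimal dynamics, first assuming $\nu(\mathcal S^\downarrow_-)<\infty$ and then extending by approximation: each split of the tagged fragment, occurring at rate $\nu(d\mathbf s)$, sends $F_*^\lambda$ to $F_*^\lambda s_i^\lambda$ with probability $s_i$ (size-biased selection of the child containing $U$) and to $0$ with probability $1-\sum_i s_i$ (tagged point lost to dust), while continuous erosion multiplies $F_*$ by $\exp(-ct)$, producing the drift $c\lambda$; assembling these contributions yields the claimed form of $\phi$, the integrability of the Lévy-measure term near $s_1=1$ following from $\int(1-s_1)\nu<\infty$. For general $\alpha$, self-similarity says that a fragment of mass $m$ inside $F$ evolves like a unit-mass fragmentation whose internal clock runs at speed $m^\alpha$; applying this along the single trajectory of the tagged fragment and setting $\tau(t):=\int_0^t F_*(r)^\alpha\,dr$, the process $u\mapsto F_*(\tau^{-1}(u))$ is the tagged fragment of a homogeneous fragmentation with the same $(\nu,c)$ and therefore equals $\exp(-\xi_u)$. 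Substituting back gives $F_*(t)=\exp(-\xi_{\tau(t)})$, and differentiating yields $\tau'(t)=\exp(-\alpha\xi_{\tau(t)})$; the change of variable $v=\tau(r)$ then gives $t=\int_0^{\tau(t)}\exp(\alpha\xi_v)\,dv$, which is the announced inverse formula for $\tau$.

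The main obstacle is making the homogeneous branching step rigorous: this requires a concrete construction of $F$ (via Bertoin's Poisson point process of dislocations on exchangeable partitions or on nested intervals) so that the conditional independence and the size-biased selection statements are measurable, and so that the passage from finite $\nu$ to arbitrary $\nu$ with $\int(1-s_1)\nu<\infty$ is justified by a monotone-approximation and coupling argument. A secondary difficulty appears in the self-similar case when $\alpha<0$: then $\int_0^\infty\exp(\alpha\xi_r)\,dr<\infty$ almost surely (a standard fact on exponential functionals of subordinators), so $F_*$ is absorbed at $0$ in finite time, and the argument must verify that $\tau$ remains a genuine right-continuous time-change up to and including this absorption time.
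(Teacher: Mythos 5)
The paper does not actually prove this statement: it is quoted directly from Bertoin's book (the homogeneous tagged--fragment theorem plus the Lamperti--type time change for self--similar processes), so there is no in--paper argument to compare against. Your sketch reconstructs precisely the route of the cited source: treat $\alpha=0$ first, show via the branching property and the size--biasing of a uniformly tagged point that $-\log F_*$ has stationary independent non--negative increments, identify the Laplace exponent by an infinitesimal/Poissonian computation, and pass to general $\alpha$ through the fragment--wise clock $\tau(t)=\int_0^t F_*(r)^{\alpha}\,\mathrm{d}r$; your verification that this $\tau$ inverts $u\mapsto\int_0^u e^{\alpha\xi_v}\,\mathrm{d}v$, and your remark on finite absorption when $\alpha<0$, are correct and standard.

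There is one bookkeeping step that, as written, would yield the wrong exponent. You attribute to the erosion only the deterministic factor $e^{-ct}$, hence only the drift term $c\lambda$; but the stated $\phi$ also carries a killing contribution $c$ originating from the erosion, visible as $\phi(0)=c+\int_{\mathcal S^{\downarrow}_-}(1-\sum_i s_i)\,\nu(\mathrm{d}\mathbf{s})$. The reason is that erosion does not merely shrink the tagged fragment: in the partition representation the tagged integer is isolated into a singleton (i.e.\ sent to dust) at rate $c$, and in the interval representation the tagged point itself is eroded away at rate $c$. Accounting for this, erosion contributes $c(1+\lambda)=c+c\lambda$, and the total becomes $\phi(\lambda)=c(1+\lambda)+\int_{\mathcal S^{\downarrow}_-}\bigl(1-\sum_{i}s_i^{1+\lambda}\bigr)\nu(\mathrm{d}\mathbf{s})$, which expands to the displayed formula. (You did handle the analogous killing correctly for dislocations, via the probability $1-\sum_i s_i$ of landing in the dust.) Note that this correction is invisible in the only case the paper uses, namely $c=0$ and conservative $\nu$. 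The remaining difficulties you flag --- a measurable Poissonian construction making the size--biased restarting rigorous, and the passage from finite to $\sigma$--finite $\nu$ --- are real but are resolved in the cited reference exactly along the lines you propose.
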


\subsubsection{Self--similar fragmentation trees} 
\label{sec:ssft}

It was shown in \cite{HM04} that to every self--similar fragmentation process with a negative index $\alpha=-\gamma<0$, no erosion ($c=0$) and a dislocation measure $\nu$ satisfying  $\nu(\sum_{i \geq 1}s_i<1)=0$ (we say that $\nu$ is \emph{conservative}), there is an associated compact rooted measured tree that describes its genealogy. We denote such a tree by $(\mathcal T_{\gamma,\nu}, \mu_{\gamma,\nu})$ and precise that the measure $\mu_{\gamma,\nu}$ is fully supported by the set of leaves of $\mathcal T_{\gamma,\nu}$ and non--atomic. In \cite{Steph13}, Stephenson more generally constructed and studied  compact rooted measured trees that describe the genealogy of any self--similar fragmentation process with a negative index, however in this survey, we restrict ourselves to the family of trees $(\mathcal T_{\gamma,\nu}, \mu_{\gamma,\nu})$, with $\gamma>0$ and $\nu$ conservative. 

The connection between a tree $(\mathcal T_{\gamma,\nu}, \mu_{\gamma,\nu})$ and the fragmentation process it is related to can be summarized as follows: for all $t\geq 0$, consider the connected components of $\{v \in \mathcal T_{\gamma,\nu}: \mathrm{ht}(v)>t \}$, the set of points in $\mathcal T_{\gamma,\nu}$ that have a height strictly larger than $t$, and let $F(t)$ denote the decreasing rearrangement of the $\mu_{\gamma,\nu}$--masses of these components. Then $F$ is a fragmentation process, with index of self--similarly $-\gamma$, dislocation measure $\nu$ and no erosion. Besides, we note that $\mathcal T_{\gamma,\nu}$ possesses a fractal property, in the sense that if we fix a $t\geq 0$ (deterministic) and consider a point $x$ at height $t$, then any subtree of $\mathcal T_{\gamma,\nu}$ descending from this point $x$ (i.e. any connected component of $\{v\in \mathcal T_{\gamma,\nu}: x \in [[\rho,v]] \}$), having, say, a $\mu_{\gamma,\nu}$--mass $m$, is distributed as $m^{\gamma}\mathcal T_{\gamma,\nu}$.

\bigskip

\textbf{First examples.} The Brownian tree and the $\alpha$--stable trees that arise as scaling limits of Galton--Watson trees all belong to the family of self--similar fragmentation trees. More precisely,

$\bullet$ Bertoin \cite{BertoinSSF} notices that the Brownian tree ($\mathcal T_{\mathrm{Br}}$, $\mu_{\mathrm{Br}}$) is a self--similar fragmentation tree and calculates its characteristics: $\gamma=1/2$ and $\nu_{\mathrm{Br}}(s_1+s_2<1)=0$ and $$\nu_{\mathrm{Br}}(s_1 \in \mathrm dx)=\frac{\sqrt 2}{\sqrt \pi x^{3/2}(1-x)^{3/2}},  \quad1/2<x<1.$$
The fact that $\nu_{\mathrm{Br}}(s_1+s_2<1)=0$ corresponds to the fact the tree is binary: every branch point has two descendants trees, and in the corresponding fragmentation, every splitting events give two fragments.  

$\bullet$ Miermont \cite{Mier03} proves that each stable tree $\mathcal T_{\alpha}$ is self--similar and calculates its characteristics when $\alpha \in (1,2)$: $\gamma=1-1/\alpha$ and 
$$
\int_{\mathcal S^{\downarrow}}f(\mathbf s)\nu_{\alpha}(\mathrm d \mathbf s) =
C_{\alpha} \mathbb E\left[T_1 f\left(\frac{\Xi_i}{T_1},i \geq 1\right)\right],
$$
where 
\begin{equation*}
\label{Cbeta}
C_{\alpha}=\frac{\alpha(\alpha-1) \Gamma(1- 1/\alpha)}{\Gamma(2 -\alpha)}
\end{equation*}  
and  $\left(\Xi_i,i\geq 1 \right)$ is the sequence of lengths, ranked in the decreasing order, of intervals between successive atoms of a Poisson measure on $\mathbb R_+$ with intensity $(\alpha\Gamma(1-1/\alpha))^{-1} \mathrm dr /r^{1+1/\alpha}$, and $T_1=\sum_{i}\Xi_i$.

\bigskip

\textbf{Hausdorff dimension.} We first quickly recall the definition of Hausdorff dimension, which is a quantity that measures the ``size" of metric spaces. We refer to the book of Falconer \cite{Falc14} for more details on that topic and for an introduction to fractal geometry in general. For all $r>0$, the $r$--dimensional Hausdorff measure of a metric space $(Z,d_Z)$ is defined by
$$
\mathcal M^{r}(Z):=\lim_{\varepsilon \rightarrow 0} \inf_{\{(C_i)_{i\in \mathbb N} : \mathrm{diam}C_i\leq \varepsilon\}}\left\{\sum_{i \geq 1} \mathrm{diam}(C_i)^{r} : Z \subset \cup_{i\geq1} C_i \right\}.
$$
where the infimum is taken over all coverings of $Z$ by countable families of subsets $C_i \subset Z$, $i\in \mathbb N$, all with a diameter smaller than $\varepsilon$. The function $r >0 \mapsto \mathcal M^{r}(Z) \in [0,\infty]$ is finite, non--zero at most one point. 
The Hausdorff dimension of $Z$ is then given by
$$
\dim_{\mathrm{H}}(Z)=\inf\big\{r>0: \mathcal M^{r}(Z)=0\big\}=\sup\big\{r>0: \mathcal M^{r}(Z)=\infty\big\}.
$$

The Hausdorff dimension of a fragmentation tree depends mainly on its index of self--similarity. Let $\mathcal L(\mathcal T_{\gamma,\nu})$ denote the set of leaves of $\mathcal T_{\gamma,\nu}$. Then we know that, 
\begin{thm}[\cite{HM04}]
If $\int_{\mathcal S^{\downarrow}}(s_1^{-1}-1) \nu (\mathrm d \mathbf s)<\infty$, then almost surely
$$
\dim_{\mathrm H}(\mathcal L\left({\mathcal T_{\gamma,\nu}})\right)=\frac{1}{\gamma}
\quad\text{and} \quad
\dim_{\mathrm H}(\mathcal T_{\gamma,\nu})=\max\left(\frac{1}{\gamma},1\right). 
$$
\end{thm}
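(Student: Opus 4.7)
My plan is to first compute $\dim_{\mathrm H}(\mathcal L(\mathcal T_{\gamma,\nu}))$ using the tagged-fragment representation of Theorem \ref{thm:tagged}, and then to deduce the dimension of $\mathcal T_{\gamma,\nu}$ itself by decomposing it into its leaves and its skeleton. If $x$ is a leaf sampled from $\mu_{\gamma,\nu}$, the process of masses of the fragments along the ancestral line of $x$ is $F_*(t)=\exp(-\xi_{\tau(t)})$ for $\xi$ a subordinator with Laplace exponent $\phi(\lambda)=\int_{\mathcal S^\downarrow}\sum_i(1-s_i^\lambda)s_i\,\nu(\mathrm d\mathbf s)$, and the height of $x$ is $\mathrm{ht}(x)=\int_0^\infty e^{-\gamma\xi_r}\,\mathrm dr$. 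The integrability hypothesis $\int(s_1^{-1}-1)\nu(\mathrm d\mathbf s)<\infty$ will deliver the finite-moment controls on $\xi$ used below, in particular that $m:=\phi'(0^+)=\mathbb E[\xi_1]<\infty$.

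\textbf{Lower bound on the leaves.} The key geometric observation is that if $d(x,y)\le r$ then $\mathrm{ht}(x\wedge y)\ge \mathrm{ht}(x)-r$, so $y$ belongs to the fragment at time $\mathrm{ht}(x)-r$ containing $x$; therefore
\begin{equation*}
\mu_{\gamma,\nu}(B(x,r))\le F_*(\mathrm{ht}(x)-r)=\exp\!\bigl(-\xi_{\tau(\mathrm{ht}(x)-r)}\bigr).
\end{equation*}
Setting $u=\tau(\mathrm{ht}(x)-r)$, the identity $r=\int_u^\infty e^{-\gamma\xi_s}\,\mathrm ds$ combined with the strong law of large numbers $\xi_u/u\to m$ yields $u\sim -\log r/(\gamma m)$ as $r\to 0$, and hence
\begin{equation*}
\liminf_{r\to 0^+}\frac{\log\mu_{\gamma,\nu}(B(x,r))}{\log r}\ge\frac{1}{\gamma}\qquad \mu_{\gamma,\nu}\text{-a.s.}
\end{equation*}
The mass distribution principle (\emph{cf.} \cite{Falc14}) then gives $\dim_{\mathrm H}(\mathcal L(\mathcal T_{\gamma,\nu}))\ge 1/\gamma$.

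\textbf{Upper bound on the leaves.} To match this from above, I would cover $\mathcal L(\mathcal T_{\gamma,\nu})$ via a stopping line: fix $\varepsilon>0$ and, independently on each branch, freeze the current fragment the first time its mass drops below $\varepsilon$. By the conservativeness of $\nu$ and the strong Markov branching property at this stopping line, the frozen fragments $(M_k)$ satisfy $\sum_k M_k=1$, $M_k\le\varepsilon$, and the sub-trees $(\mathcal T_k)$ descending from them are conditionally independent with $\mathrm{diam}(\mathcal T_k)\stackrel{(d)}{=}M_k^\gamma D_k$, where $(D_k)$ are i.i.d. copies of $D:=\mathrm{diam}(\mathcal T_{\gamma,\nu})$. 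For any $s>1/\gamma$, so that $s\gamma>1$,
\begin{equation*}
\mathbb E\!\left[\sum_k (\mathrm{diam}\,\mathcal T_k)^s\right]=\mathbb E[D^s]\cdot \mathbb E\!\left[\sum_k M_k^{s\gamma}\right]\le \mathbb E[D^s]\,\varepsilon^{s\gamma-1}\underset{\varepsilon\to 0}{\longrightarrow}0,
\end{equation*}
where $\mathbb E[D^s]<\infty$ follows from $D\le 2\,\mathrm{ht}(\mathcal T_{\gamma,\nu})$ combined with standard moment estimates on the exponential functional $\int_0^\infty e^{-\gamma\xi_r}\mathrm dr$ driven by $\phi$. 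After iterating the stopping-line construction on any subtree whose diameter happens to exceed $\varepsilon$, one obtains a genuine covering of $\mathcal L(\mathcal T_{\gamma,\nu})$ by sets of diameter at most $\varepsilon$, so $\mathcal M^s(\mathcal L(\mathcal T_{\gamma,\nu}))=0$ for every $s>1/\gamma$, giving $\dim_{\mathrm H}(\mathcal L(\mathcal T_{\gamma,\nu}))\le 1/\gamma$.

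\textbf{From leaves to the whole tree, and main obstacle.} For the whole tree one has $\dim_{\mathrm H}(\mathcal T_{\gamma,\nu})\ge\max(1,1/\gamma)$ because $\mathcal T_{\gamma,\nu}\supset\mathcal L(\mathcal T_{\gamma,\nu})$ and $\mathcal T_{\gamma,\nu}$ contains any isometric segment $\varphi_{\rho,x}([0,\mathrm{ht}(x)])$ of positive length; conversely, the skeleton $\mathcal T_{\gamma,\nu}\setminus\mathcal L(\mathcal T_{\gamma,\nu})$ is a countable union of arcs (one between each pair of consecutive branch points) and hence has $\sigma$-finite one-dimensional Hausdorff measure, so $\dim_{\mathrm H}(\mathcal T_{\gamma,\nu}\setminus\mathcal L(\mathcal T_{\gamma,\nu}))\le 1$, which together with the leaf estimate yields $\dim_{\mathrm H}(\mathcal T_{\gamma,\nu})\le\max(1,1/\gamma)$. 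The hardest step is the upper bound on the leaves: the moment identity is short, but turning it into a legitimate covering of arbitrarily small diameter requires handling the heavy right tail of $D$, and establishing $\mathbb E[D^s]<\infty$ for some $s>1/\gamma$ is exactly where the strengthened hypothesis $\int(s_1^{-1}-1)\nu(\mathrm d\mathbf s)<\infty$ (rather than the bare assumption $\int(1-s_1)\nu<\infty$ needed merely to define the fragmentation) is used in an essential way.
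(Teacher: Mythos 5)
First, note that these lecture notes do not prove this statement: it is quoted from \cite{HM04}, so there is no in--paper argument to compare against, and your proposal has to be measured against the proof in that reference. Your overall architecture is the right one and matches \cite{HM04} in outline (a stopping--line covering for the upper bound, a Frostman--type estimate at a $\mu_{\gamma,\nu}$--typical leaf for the lower bound, and the observation that the skeleton is a countable union of rectifiable arcs, hence of dimension at most $1$). However, the lower bound --- which is the genuinely hard half of the theorem and the only place where the hypothesis $\int_{\mathcal S^{\downarrow}}(s_1^{-1}-1)\nu(\mathrm d\mathbf s)<\infty$ is really needed --- does not close as you have written it. From $r=\int_u^{\infty}e^{-\gamma\xi_s}\,\mathrm ds=e^{-\gamma\xi_u}\tilde I_u$ with $\tilde I_u\overset{(d)}{=}\int_0^{\infty}e^{-\gamma\xi_s}\,\mathrm ds$, the conclusion $\log\mu_{\gamma,\nu}(B(x,r))/\log r\to 1/\gamma$ requires $\log\tilde I_u=o(\xi_u)$, and the dangerous direction is $\tilde I_u$ being super--exponentially \emph{small}; controlling this lower tail amounts to $\mathbb E[e^{\lambda\xi_1}]<\infty$ for some $\lambda>0$, i.e.\ $\int\sum_i(s_i^{1-\lambda}-s_i)\,\nu(\mathrm d\mathbf s)<\infty$. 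This is \emph{not} implied by the stated hypothesis: take $\nu=\sum_k 2^{-k}\delta_{\mathbf s^{(k)}}$ with $\mathbf s^{(k)}=(1/2,\tfrac{1}{2N_k},\dots,\tfrac{1}{2N_k})$ and $N_k\to\infty$ fast; then $\int(s_1^{-1}-1)\nu=1$ while $\int\sum_i(s_i^{1-\lambda}-s_i)\nu=\infty$ for every $\lambda>0$. The same example shows that $\phi'(0^+)=\int\sum_i s_i\ln(1/s_i)\,\nu(\mathrm d\mathbf s)$ (an entropy, controlled by $\ln N$ rather than by $s_1^{-1}-1$) need not be finite, so your assertion that the hypothesis yields $m=\mathbb E[\xi_1]<\infty$ is also unfounded. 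In short, the two--line ``SLLN yields $u\sim-\log r/(\gamma m)$'' hides precisely the estimate that fails for the $\mu$--tagged fragment; this is why \cite{HM04} has to work harder for the lower bound, and why the hypothesis enters there rather than where you place it.

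Two further, more localized, problems. In the upper bound you justify $\mathbb E[D^s]<\infty$ by $D\le 2\,\mathrm{ht}(\mathcal T_{\gamma,\nu})$ together with ``standard moment estimates on $\int_0^{\infty}e^{-\gamma\xi_r}\,\mathrm dr$''; but that exponential functional is the height of a \emph{typical} leaf sampled from $\mu_{\gamma,\nu}$, not the supremum of the heights over all leaves, which is what $\mathrm{ht}(\mathcal T_{\gamma,\nu})$ is. The statement $\mathbb E[(\mathrm{ht}\,\mathcal T_{\gamma,\nu})^p]<\infty$ for all $p$ is true, but it requires a separate recursive tail estimate (it is established in \cite{HM04} as part of the compactness proof, without the extra integrability hypothesis). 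Consequently your closing claim, that $\int(s_1^{-1}-1)\nu<\infty$ is ``exactly'' what makes $\mathbb E[D^s]<\infty$, misidentifies the role of the hypothesis: the covering upper bound goes through without it, and it is the lower bound that cannot be obtained by the route you propose.
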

In particular, the Hausdorff dimension of the Brownian tree is 2, and more generally the Hausdorff dimension of the $\alpha$-stable tree, $\alpha \in (1,2)$, is $\alpha/(\alpha-1)$. This recovers a result of Duquesne and Le Gall \cite{DLG05}  proved in the framework of Lévy trees (we note that the intersection between the set of Lévy trees and that of self--similar fragmentation trees is exactly 
the set of stable Lévy trees).

\bigskip

\textbf{Height of a typical leaf.} The measured tree $(\mathcal T_{\gamma,\nu}, \mu_{\gamma,\nu})$ has been constructed in such a way that if we pick a leaf $L$ at random in $\mathcal T_{\gamma,\nu}$ according to $ \mu_{\gamma,\nu}$ and we consider for each $t\geq 0$ the $\mu_{\gamma,\nu}$-mass of the connected component of $\big\{v \in \mathcal T_{\gamma,\nu}: \mathrm{ht}(v)>t \big\}$ that contains this marked leaf (with the convention that this mass is 0 if $\mathrm{ht}(L)\leq t$), then we obtain a process which is distributed as the tagged fragment of the corresponding fragmentation process, as defined in the previous section. In particular, the height of $L$ is distributed as the absorption time of the process $\exp(-\xi_{\tau})$ introduced in Theorem \ref{thm:tagged}, i.e.
\begin{equation}
\label{Lawtypical}
\mathrm{ht}(L)\overset{\mathrm{(d)}}{=}\int_0^{\infty}\exp(-\gamma \xi_r) \mathrm dr
\end{equation}
where $\xi$ is a subordinator with Laplace exponent $\phi(\lambda)=\int_{\mathcal S^{\downarrow}}\sum_{i\geq 1} (1-s_i^{\lambda})s_i \nu(\mathrm d \mathbf s), \lambda \geq 0$. This is exactly the distribution of the limit appearing in Corollary \ref{corocvMC}.

\subsection{Scaling limits of Markov--Branching trees}
\label{sec:proof}

We can now explain the main steps of the proof of Theorem \ref{mainthm:leaves}. In that aim, let $(T_n,n\geq 1)$ denote a MB--sequence indexed by the leaves with splitting probabilities satisfying $(\mathsf H)$, with parameters $(\gamma,\nu)$ in the limit.
We actually only give here a hint of the proof of the convergence of the rescaled trees and refer to \cite[Section 4.4]{HM12} to see how to incorporate the measures. The proof of the convergence of the rescaled trees proceeds in three main steps:

\bigskip

\textbf{First step: convergence of the height of a typical leaf.} Keeping the notations previously introduced, $\mathrm{ht}(\star_n)$ for the height of a typical leaf in $T_n$ and $\mathrm{ht}(L)$ for the height of a typical leaf in a fragmentation tree $(\mathcal T_{\gamma,\nu}, \mu_{\gamma,\nu})$, we get by (\ref{Lawtypical}) and Corollary \ref{corocvMC} that
$$
\frac{\mathsf{ht}(\star_{n})}{n^{\gamma}} \overset{\mathrm{(d)}}{\underset{n \rightarrow \infty} \longrightarrow} \mathrm{ht}(L).
$$

\medskip

\textbf{Second step: convergence of finite--dimensional marginals.} For all integers $k \geq 2$, let $T_n(k)$ be the subtree of $T_n$ spanned by the root and $k$ (different) leaves picked independently, uniformly at random. Similarly, let $\mathcal T_{\gamma,\nu}(k)$  be the subtree of $\mathcal T_{\gamma,\nu}$ spanned by the root and $k$ leaves picked independently at random according to the measure $\mu_{\gamma,\nu}$. Then (under $(\mathsf H)$),
\begin{equation}
\label{fdcv}
\frac{T_n(k)}{n^{\gamma}} \overset{\mathrm{(d)}}{\underset{n \rightarrow \infty}\longrightarrow} \mathcal T_{\gamma,\nu}(k).
\end{equation}
This can be proved by induction on $k$. For $k=1$, this is Step 1 above. For $k \geq 2$, we use the induction hypothesis and the MB--property. Here is the main idea. Consider the decomposition of $T_n$ into subtrees above its first branch point in $T_n(k)$ and take only into account the subtrees having marked leaves. We obtain $m \geq 2$ subtrees with, say, $n_1,\ldots,n_m$ leaves respectively ($\sum_{i=1}^m n_i \leq n$), and each of these trees have $k_1\geq 1,\ldots k_m\geq 1$ marked leaves ($\sum_{i=1}^m k_i =k$). Given $m$,  $n_1,\ldots,n_m$, $k_1,\ldots k_m$, the MB--property ensures that the $m$ subtrees are independent with respective distributions that of $T_{n_1}(k_1),\ldots,T_{n_m}(k_m)$. An application of the induction hypothesis to these subtrees leads to the expected result. We refer to \cite[Section 4.2]{HM12} for details. 

\bigskip

\textbf{Third step: a tightness criterion.} To get the convergence for the GH--topology, the previous result must be completed with a tightness criterion. The idea is to use the following well--known result.
\begin{thm}[\cite{Bill99}, Theorem 3.2]
If $X_n,X,X_n(k),X(k)$ are r.v. in a metric space $(E,d)$ such that
$
X_n(k) \overset{\mathrm{(d)}}{\underset{n \rightarrow \infty}\longrightarrow} X(k)$, $\forall k$
and 
$
X(k) \overset{\mathrm{(d)}}{\underset{k \rightarrow \infty}\longrightarrow}X
$
and for all $\varepsilon>0$,
\begin{equation}
\label{tightnesscriterion}
\lim_{k \rightarrow \infty} \limsup_{n \rightarrow \infty} \mathbb P\left(d(X_n,X_n(k))>\varepsilon \right)=0
\end{equation}
then 
$X_n \overset{\mathrm{(d)}}{\underset{n \rightarrow \infty}\longrightarrow} X$.
\end{thm}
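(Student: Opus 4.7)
The plan is to prove convergence in distribution via the Portmanteau theorem: it suffices to show that $\mathbb E[f(X_n)] \to \mathbb E[f(X)]$ for every bounded uniformly continuous function $f : E \to \mathbb R$. I would then introduce the auxiliary variables $X_n(k)$ as stepping stones between $X_n$ and $X$, and control the three resulting discrepancies via a $3\varepsilon$--argument.

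Concretely, fix such an $f$ and write
\[
\bigl|\mathbb E[f(X_n)] - \mathbb E[f(X)]\bigr| \;\leq\; A_{n,k} + B_{n,k} + C_k,
\]
where $A_{n,k} = \bigl|\mathbb E[f(X_n)] - \mathbb E[f(X_n(k))]\bigr|$, $B_{n,k} = \bigl|\mathbb E[f(X_n(k))] - \mathbb E[f(X(k))]\bigr|$ and $C_k = \bigl|\mathbb E[f(X(k))] - \mathbb E[f(X)]\bigr|$. For $B_{n,k}$, the assumption $X_n(k) \Rightarrow X(k)$ as $n \to \infty$ immediately yields $\lim_{n \to \infty} B_{n,k} = 0$ for each fixed $k$, by the Portmanteau theorem applied to $f$. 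For $C_k$, the assumption $X(k) \Rightarrow X$ as $k \to \infty$ likewise gives $\lim_{k \to \infty} C_k = 0$. The interesting term is $A_{n,k}$: using uniform continuity, pick $\delta>0$ such that $d(x,y)<\delta$ implies $|f(x)-f(y)| < \varepsilon$; then
\[
A_{n,k} \;\leq\; \varepsilon \;+\; 2\|f\|_\infty \, \mathbb P\bigl(d(X_n,X_n(k)) \geq \delta\bigr).
\]
Taking $\limsup_{n\to\infty}$ and then $\lim_{k\to\infty}$ and invoking the hypothesis (\ref{tightnesscriterion}) (with $\delta$ in place of $\varepsilon$) yields $\lim_{k\to\infty}\limsup_{n\to\infty} A_{n,k} \leq \varepsilon$.

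Combining these three bounds gives, for every fixed $k$,
\[
\limsup_{n\to\infty} \bigl|\mathbb E[f(X_n)] - \mathbb E[f(X)]\bigr| \;\leq\; \limsup_{n\to\infty} A_{n,k} + 0 + C_k,
\]
and then letting $k \to \infty$ yields $\limsup_{n\to\infty} \bigl|\mathbb E[f(X_n)] - \mathbb E[f(X)]\bigr| \leq \varepsilon$. Since $\varepsilon>0$ was arbitrary, the left-hand side is zero, which proves weak convergence $X_n \Rightarrow X$. The only delicate point is the order of limits in $A_{n,k}$: one must take $\limsup_{n}$ \emph{before} $\lim_k$, otherwise the probability bound from (\ref{tightnesscriterion}) cannot be applied. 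Aside from this bookkeeping the argument is entirely standard, and the Portmanteau reduction to bounded uniformly continuous test functions is what makes the uniform-continuity estimate on $A_{n,k}$ available in the first place.
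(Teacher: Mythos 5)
Your proof is correct. Note that the paper does not prove this statement at all -- it is quoted verbatim as Theorem 3.2 of Billingsley's \emph{Convergence of Probability Measures} and used as a black box in the tightness step of Theorem \ref{mainthm:leaves} -- so there is no in-paper argument to compare against. Your $3\varepsilon$--argument via bounded \emph{uniformly} continuous test functions is a standard and complete route: the reduction to such test functions is legitimate (it is one of the equivalent conditions in the Portmanteau theorem, valid in any metric space), the bound $A_{n,k}\leq \varepsilon + 2\|f\|_\infty\,\mathbb P(d(X_n,X_n(k))\geq\delta)$ is right, and you correctly take $\limsup_n$ before $\lim_k$ so that hypothesis (\ref{tightnesscriterion}) applies (the passage from the event $\{d\geq\delta\}$ to $\{d>\delta'\}$ for some $\delta'<\delta$ is harmless). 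For the record, Billingsley's own proof runs instead through the closed-set form of Portmanteau, bounding $\mathbb P(X_n\in F)\leq \mathbb P(X_n(k)\in F^\varepsilon)+\mathbb P(d(X_n,X_n(k))\geq\varepsilon)$ for closed $F$ and then shrinking the enlargement $F^\varepsilon$; the two arguments are of essentially equal length and difficulty, and yours buys nothing and loses nothing relative to it.
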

In our context, the finite--dimensional convergence (\ref{fdcv}) has already been checked. Moreover, since $\mu_{\gamma,\nu}$ is fully supported on the set of leaves of $\mathcal T_{\gamma,\nu}$, we see by picking an infinite sequence of i.i.d. leaves according to $\mu_{\gamma,\nu}$, that there exist versions of the  $\mathcal T_{\gamma,\nu}(k),k \geq 1$ that converge almost surely to $\mathcal T_{\gamma,\nu}$ as $k \rightarrow \infty$. It remains to  
establish the tightness criterion (\ref{tightnesscriterion}) for $T_n,T_n(k)$, with respect to the distance $d_{\mathrm{GH}}$. The main tool is the following bounds:
\begin{prop}
Under $(\mathrm{H})$, for all $p>0$, there exists a finite constant $C_p$ such that
$$
\mathbb P \left(\frac{\mathsf{ht}(T_n)}{n^{\gamma}} \geq x \right) \leq \frac{C_p}{x^p}, \quad \forall x>0, \forall n \geq 1. 
$$
\end{prop}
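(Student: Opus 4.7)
The plan is to establish a uniform moment bound $\mathbb{E}[\mathsf{ht}(T_n)^p] \leq K_p n^{p\gamma}$ for all $p$ sufficiently large; Markov's inequality then gives the stated tail bound, and for small $p$ the bound follows from a large-$p$ bound via $\mathbb{P}(Y \geq x) \leq \max(C_q,1)/x^p$ whenever $p \leq q$. The main structural input is the recursive decomposition supplied by the MB property: absorbing the geometric number $G_n$ of consecutive $\Lambda=(n)$ self-loops into a single variable (geometric with success probability $1-q_n((n))$), one has
\begin{equation*}
\mathsf{ht}(T_n) \stackrel{(\mathrm d)}{=} G_n + 1 + \max_{1 \leq i \leq p(\Lambda^*)} \mathsf{ht}\bigl(T_{\Lambda_i^*}^{(i)}\bigr),
\end{equation*}
where $\Lambda^*$ has law $q_n(\cdot\,|\,\Lambda \neq (n))$ and the subtrees $T_{\Lambda_i^*}^{(i)}$ are independent given $\Lambda^*$.

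Two analytic ingredients then drive an induction on $n$. First, applying $(\mathsf H)$ with $f \equiv 1$ gives $n^\gamma A_n \to I_0 := \int(1-s_1)\nu(\mathrm d\mathbf s) > 0$, where $A_n := \sum_\lambda q_n(\lambda)(1-\lambda_1/n)$; since $1-q_n((n)) \geq A_n$, this yields $1 - q_n((n)) \geq c\,n^{-\gamma}$ for large $n$ and hence $\mathbb{E}[G_n^p] \leq C_p n^{p\gamma}$ for every $p>0$. Second, for $p\gamma > 1$ one proves the strict concavity-type inequality
\begin{equation*}
\sum_{i \geq 1} s_i^{p\gamma} \leq 1 - c_\star (1-s_1), \qquad \mathbf s \in \mathcal S^{\downarrow},
\end{equation*}
with some $c_\star = c_\star(p\gamma) > 0$. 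This follows from the continuity (for $p\gamma > 1$) of $\mathbf s \mapsto (1-\sum_i s_i^{p\gamma})/(1-s_1)$ on the compact space $\mathcal S^{\downarrow}$, its extension by $p\gamma$ at $(1,0,\ldots)$, and its strict positivity elsewhere.

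With these two ingredients, apply Young's inequality $(a+b)^p \leq (1+\eta)^p b^p + C_{\eta,p}\,a^p$ (with $a = G_n+1$ and $b = \max_i \mathsf{ht}(T_{\Lambda_i^*}^{(i)})$) together with $(\max_i x_i)^p \leq \sum_i x_i^p$. Writing $\phi_n = \mathbb{E}[\mathsf{ht}(T_n)^p]$ and assuming $\phi_m \leq K m^{p\gamma}$ for all $m < n$, one obtains
\begin{equation*}
\phi_n \leq (1+\eta)^p K n^{p\gamma}\Bigl(1 - c_\star \frac{A_n}{1 - q_n((n))}\Bigr) + C_{\eta,p}\,\mathbb{E}[(G_n+1)^p].
\end{equation*}

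The main obstacle is closing this recursion uniformly in $n$. When $\nu$ is finite, $n^\gamma(1-q_n((n))) \to \nu(\mathcal S^{\downarrow}) < \infty$, so $A_n/(1-q_n((n)))$ is bounded below by a positive constant; then choosing $\eta$ small and $K$ large gives the induction. When $\nu$ is infinite, however, $n^\gamma(1-q_n((n))) \to \infty$ and this ratio tends to $0$, so the contraction in the first term degenerates; the compensating mechanism is that $G_n$ becomes smaller in this regime, making $\mathbb{E}[(G_n+1)^p]/n^{p\gamma}$ shrink. The crux of the argument is thus a careful trade-off between the two right-hand terms; this can be carried out either by strengthening the induction hypothesis to $\phi_n \leq K n^{p\gamma} - K' n^{p\gamma-\varepsilon}$ for some small $\varepsilon>0$, or by choosing $\eta$ as a vanishing (but summable) function of $n$ so that the product telescopes. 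Once the moment bound is established for one $p > 1/\gamma$, the stated tail bound follows for all $p > 0$ by Markov's inequality and the monotonicity remark above.
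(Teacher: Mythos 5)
Your architecture is the right one and matches the argument the paper defers to (\cite[Section 4.3]{HM12}): induct on $n$ via the MB decomposition at the first effective branching, feed $f\equiv 1$ into $(\mathsf H)$ to get $1-q_n((n))\geq c\,n^{-\gamma}$ and hence $\mathbb E[(G_n+1)^p]\leq C_p(1-q_n((n)))^{-p}$, and feed $f(\mathbf s)=(1-\sum_i s_i^{p\gamma})/(1-s_1)$ into $(\mathsf H)$ to get the contraction $\sum_{\lambda}q_n(\lambda)\sum_i(\lambda_i/n)^{p\gamma}\leq 1-c\,n^{-\gamma}$. (A side remark: $\mathcal S^{\downarrow}$ is \emph{not} compact for $d_{\mathcal S^{\downarrow}}$ — consider $(1/N,\ldots,1/N,0,\ldots)$ — so your compactness justification of $\sum_i s_i^{p\gamma}\leq 1-c_\star(1-s_1)$ does not stand as written; but the inequality is elementary, since $1-\sum_i s_i^{a}\geq (1-s_1^{a-1})\sum_i s_i = 1-s_1^{a-1}\geq \min(1,a-1)(1-s_1)$, and this also gives the boundedness of $f$ needed to apply $(\mathsf H)$.)

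The genuine gap is exactly where you flag it: the induction is never closed, and neither of your two proposed remedies is the right mechanism. Writing $\pi_n=1-q_n((n))$ and $\rho_n=c n^{-\gamma}/\pi_n\in(0,1]$, the gain from the contraction is $-K\rho_n n^{p\gamma}$, while Young's inequality with a \emph{fixed} $\eta$ costs $((1+\eta)^p-1)Kn^{p\gamma}\asymp \eta K n^{p\gamma}$, which is not controlled by $\rho_n$ when $\rho_n\to 0$ (the infinite--$\nu$ regime); your ``summable $\eta_n$ / telescoping'' suggestion does not address this, because the recursion is not a product over $n$ and $C_{\eta,p}\asymp\eta^{1-p}$ blows up. The fix is to make \emph{every} error term proportional to $\rho_n$: either take $\eta_n=\varepsilon\rho_n$, so that $(1+\eta_n)^{p-1}(1-\rho_n)\leq 1-\rho_n/2$ for $\varepsilon$ small while $C_{\eta_n,p}\,\mathbb E[(G_n+1)^p]\leq C'\varepsilon^{1-p}\rho_n n^{p\gamma}$; or, more simply, expand $(G_n+1+M)^p\leq M^p+\sum_{k=1}^{p}\binom{p}{k}(G_n+1)^kM^{p-k}$ keeping the coefficient of $M^p$ equal to $1$, use independence of $G_n$ and $M$, $\mathbb E[(G_n+1)^k]\leq C_k\pi_n^{-k}=C_k' (\rho_n n^{\gamma})^k\rho_n^{-2k}\rho_n^{k}$ — i.e. $\pi_n^{-k}= c^{-k}\rho_n^k n^{k\gamma}$ — and $\mathbb E[M^{p-k}]\leq (Kn^{p\gamma})^{1-k/p}$ to bound the cross terms by $\tilde C_pK^{1-1/p}\rho_n n^{p\gamma}$. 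In either form one gets $\phi_n\leq Kn^{p\gamma}\bigl(1-\rho_n(1-\tilde C_pK^{-1/p})\bigr)\leq Kn^{p\gamma}$ once $K\geq\tilde C_p^{\,p}$, uniformly over $\rho_n\in(0,1]$, which handles the finite and infinite $\nu$ cases simultaneously. With that single repair (and the base case $\mathbb E[\mathsf{ht}(T_1)^p]<\infty$, which uses $q_1(\emptyset)>0$), your proof is complete and coincides in substance with the one in \cite{HM12}.
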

The proof holds by induction on $n$, using $(\mathrm{H})$ and the MB--property. We refer to \cite[Section 4.3]{HM12} for details and to see how, using again the MB--property, this helps to control the distance between $T_n$ and $T_n(k)$, to get that for $\varepsilon>0$:
$$
\lim_{k \rightarrow \infty} \limsup_{n \rightarrow \infty} \mathbb P\left(d_{\mathrm{GH}} \left(\frac{T_n(k)}{n^{\gamma}},\frac{T_n}{n^{\gamma}}\right) \geq \varepsilon\right)=0
$$ 
as required.

\section{Applications}
\label{sec:applic}

We now turn to the description of the scaling limits of various models of random trees that are closely linked to the MB--property.

\subsection{Galton--Watson trees}

\subsubsection{Galton--Watson trees with $n$ vertices}

A first application of Theorem \ref{mainthm:vertices} is that it permits to recover the classical results of Aldous and Duquesne (grouped together in Theorem \ref{thm:AldousDuquesne}) on the scaling limits of Galton--Watson trees conditioned to have $n$ vertices. To see this, one just has to check the two following lemmas, for $\eta$ a critical offspring distribution, $\eta(1)\neq 1$, and $(p_n^{\mathrm{GW},\eta})$ the associated splitting distributions defined in (\ref{splitGW}).   

\begin{lem}
\label{lemGW1}
If $\eta$ has a finite variance $\sigma^2$, then $(p_n^{\mathrm{GW},\eta})$ satisfies $\mathrm{(H)}$ with 
$$\gamma=1/2 \quad \text{and} \quad  \nu=\frac{\sigma}{2} \nu_{\mathrm Br}.$$
\end{lem}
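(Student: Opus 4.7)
The plan is to verify $(\mathsf H)$ with $\gamma=1/2$ and $\nu = (\sigma/2)\nu_{\mathrm{Br}}$ directly for the explicit formula (\ref{splitGW}) for $p_n^{\mathrm{GW},\eta}$. The core analytic input is Kolmogorov's asymptotic
\begin{equation*}
\mathbb P(\#T^\eta = n)\underset{n\to\infty}{\sim}\frac{c_0}{n^{3/2}},\qquad c_0:=\frac{1}{\sigma\sqrt{2\pi}},
\end{equation*}
together with the uniform estimate $\mathbb P(\#T^\eta=k)\leq Ck^{-3/2}$ for all $k\geq 1$; both follow from the Dwass cycle formula $\mathbb P(\#T^\eta=n)=n^{-1}\mathbb P(S_n=n-1)$ (with $S_n=\xi_1+\dots+\xi_n$, $\xi_i$ i.i.d.\ $\sim\eta$) and the local CLT applied to the centred walk $S_n-n$, whose variance is $n\sigma^2$.

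After substituting (\ref{splitGW}) and converting the sum over unordered partitions into a sum over ordered tuples (the multinomial $p!/\prod m_i!$ being exactly the number of orderings of a partition), the left-hand side of $(\mathsf H)$ becomes
\begin{equation*}
\frac{n^{1/2}}{\mathbb P(\#T^\eta=n+1)}\sum_{p\geq 2}\eta(p)\sum_{\substack{\mathbf n\in\mathbb N^p\\\sum n_i=n}}\prod_i\mathbb P(\#T^\eta=n_i)\,\bigl(1-\tfrac{n^\downarrow_1}{n}\bigr)f(\mathbf n^\downarrow/n).
\end{equation*}
(The $p=1$ term vanishes thanks to the factor $1-\lambda_1/n$.) The guiding heuristic is now the one-big-jump principle: since $\#T^\eta$ has regularly varying tail of index $3/2$, the conditional law of $(N_1,\dots,N_p)$ given $\sum N_i=n$ concentrates on configurations with exactly one macroscopic $N_i$ (of size $\sim n$) and the others $O(1)$. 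On such configurations $1-\lambda_1/n=O(1/n)$, which kills their contribution after the $n^{1/2}$ rescaling; likewise, configurations with three or more macroscopic parts have joint probability $O(n^{-5/2})$ and are negligible. The limit is therefore carried entirely by configurations with exactly two parts of order $n$.

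Extracting this contribution: fix $p\geq 2$, pick the two macroscopic indices (combinatorial factor $p(p-1)$ by symmetry), set $M:=n_3+\dots+n_p$, and use the Kolmogorov asymptotic on the two large terms. The sum over $(n_1,n_2)$ with $n_1\geq n_2$ and $n_1+n_2=n-M$ converts by Riemann-sum approximation (mesh $1/n$ in $x=n_1/n$, uniform for $M\leq n^{1-\delta}$ by dominated convergence against the uniform tail bound) into
\begin{equation*}
\sim \frac{c_0^2}{n^2}\int_{1/2}^1 \frac{f(x,1-x,0,\ldots)\,dx}{x^{3/2}(1-x)^{1/2}}.
\end{equation*}
Summing out the small parts uses
$\sum_{M\geq 0}\sum_{n_3,\dots,n_p\geq 1,\sum=M}\prod_i\mathbb P(\#T^\eta=n_i)=\Phi(1)^{p-2}=1$ where $\Phi$ is the p.g.f.\ of $\#T^\eta$; and summing over $p$ uses $\sum_{p\geq 2}\eta(p)p(p-1)=\Psi''(1)=\mathbb E[\xi(\xi-1)]=\sigma^2$, where $\Psi$ is the p.g.f.\ of $\eta$ (recall $\mathbb E\xi=1$ and $\mathrm{Var}\,\xi=\sigma^2$). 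Multiplying by $\mathbb P(\#T^\eta=n+1)^{-1}\sim c_0^{-1}n^{3/2}$, the limit equals $\sigma^2 c_0\int_{1/2}^1 x^{-3/2}(1-x)^{-1/2}f(x,1-x,0,\ldots)\,dx$, which rearranges to $\tfrac{\sigma}{2}\int_{\mathcal S^\downarrow}(1-s_1)f(\mathbf s)\,\nu_{\mathrm{Br}}(\mathrm d\mathbf s)$ once one recognises the density $\nu_{\mathrm{Br}}(s_1\in dx)=\sqrt{2}/(\sqrt\pi(x(1-x))^{3/2})$ on $(1/2,1)$ and the support $\{s_1+s_2=1\}$.

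The main obstacle will be the rigorous one-big-jump reduction: controlling the $o(1)$-contributions of configurations with exactly one macroscopic part (by combining $1-\lambda_1/n=O(1/n)$ with the uniform tail bound) and with three or more macroscopic parts (by summing $\prod c_0 n_j^{-3/2}$ over the $O(n^2)$-sized index set of such configurations). The uniformity in $M$ of the Riemann-sum approximation is likewise handled by dominated convergence based on the same uniform tail bound.
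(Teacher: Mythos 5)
Your proposal is correct and follows essentially the same route as the paper's sketch: the Otter--Dwass formula plus the local limit theorem to get $\mathbb P(\#T^\eta=n)\sim(2\pi\sigma^2)^{-1/2}n^{-3/2}$, substitution into the explicit splitting probabilities, reduction to configurations with exactly two macroscopic parts, and a Riemann-sum passage to the integral against $\nu_{\mathrm{Br}}$ (your constants check out, since $\sigma^2 c_0=\sigma/\sqrt{2\pi}=\tfrac{\sigma}{2}\cdot\tfrac{\sqrt2}{\sqrt\pi}$). The only difference is that you spell out the ``technical part'' (the one-big-jump reduction and the summations over small parts and over $p$ via $\Psi''(1)=\sigma^2$) that the paper only alludes to.
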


\bigskip

\begin{lem}
\label{lemGW2}
If $\eta(k) \sim \kappa k^{-\alpha-1} $ for some $\alpha \in (1,2)$, then $(p_n^{\mathrm{GW},\eta})$  satisfies $\mathrm{(H)}$ with 
$$\gamma=1-1/\alpha \quad and \quad \nu=\left(\kappa \Gamma(2-\alpha)\alpha^{-1}(\alpha-1)^{-1} \right)^{1/\alpha}\nu_{\alpha}.$$
\end{lem}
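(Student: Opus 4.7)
My plan is to verify hypothesis $(\mathsf H)$ directly from the explicit formula (\ref{splitGW}) for $p_{n-1}^{\mathrm{GW},\eta}$, with the local limit theorem (LLT) for the total progeny of $T^\eta$ as the main analytic input, and the jump structure of a spectrally positive $\alpha$-stable Lévy process providing the interpretation of the limit.

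First I would establish a sharp asymptotic of the form
\[
\mathbb{P}\!\left(\#_{\mathrm{vertices}}T^\eta = n\right) \sim c_\alpha\, n^{-1-1/\alpha}
\]
via the Otter--Dwass cyclic lemma $\mathbb{P}(\#_{\mathrm{vertices}}T^\eta=n)= n^{-1}\mathbb{P}(S_n=n-1)$, where $(S_n)$ is a random walk with iid $\eta$-distributed increments. Under the tail assumption $\eta(k)\sim \kappa k^{-\alpha-1}$, the centered walk $(S_n-n)/(a_n)$ converges to a spectrally positive $\alpha$-stable law with appropriate scaling $a_n$ proportional to $n^{1/\alpha}$, and Gnedenko's local limit theorem gives the constant $c_\alpha$ explicitly in terms of $\kappa$, $\alpha$ and the stable density at $0$.

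Second, I would plug this LLT back into (\ref{splitGW}). For a partition $\lambda=(\lambda_1,\ldots,\lambda_p)\in\mathcal P_{n-1}$ with macroscopic parts $\lambda_i\approx s_i n$, the splitting probability becomes asymptotically
\[
p_{n-1}^{\mathrm{GW},\eta}(\lambda)\;\sim\;\eta(p)\cdot \frac{p!}{\prod_i m_i(\lambda)!}\cdot c_\alpha^{\,p-1}\, n^{-(p-1)(1+1/\alpha)} \prod_{i=1}^p s_i^{-1-1/\alpha}.
\]
After multiplication by $n^{\gamma}=n^{1-1/\alpha}$, summation over partitions turns into a Riemann-sum approximation of an integral of $f$ against a limiting intensity. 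The key observation is that the factor $\eta(p)\sim \kappa p^{-\alpha-1}$ in the above display is exactly what one gets from the Lévy measure of an $\alpha$-stable subordinator, propagated through the total-progeny machinery; this is where the interpretation via the stable subordinator becomes useful.

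To identify the limiting measure as a multiple of $\nu_\alpha$, I would compare with the description of $\nu_\alpha$ recalled in Section \ref{sec:ssft}: it is given up to the normalizing constant $C_\alpha = \alpha(\alpha-1)\Gamma(1-1/\alpha)/\Gamma(2-\alpha)$ by the law of the ordered jumps of an $\alpha$-stable subordinator on $[0,1]$, size-biased by the total mass. Exactly this structure emerges on the limit side: the denominator $\mathbb{P}(T^\eta=n)$ plays the role of the conditioning on the subordinator's endpoint, and the asymptotic $\eta(p)\sim \kappa p^{-\alpha-1}$ together with the LLT for each subtree size produces the Poissonian intensity of the stable jumps. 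Tracking the constants through this identification is where the prefactor $\bigl(\kappa\,\Gamma(2-\alpha)\alpha^{-1}(\alpha-1)^{-1}\bigr)^{1/\alpha}$ arises.

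The main obstacle will be twofold: (i) the careful bookkeeping of constants, since the LLT's normalizing sequence, the stable density at $0$, and the definition of $\nu_\alpha$ all involve $\alpha$-dependent factors that must combine correctly; and (ii) the uniform control of contributions from non-macroscopic parts. Partitions with $\lambda_1/n\to 1$ and all other parts of lower order contribute most of the mass of $p_{n-1}^{\mathrm{GW},\eta}$, but the factor $(1-\lambda_1/n)$ in $(\mathsf H)$ precisely dampens this region; this is what allows an approximation by test functions supported away from $(1,0,\ldots)$ and legitimizes the passage to the limit for the $\sigma$-finite measure $\nu$.
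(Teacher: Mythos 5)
Your skeleton (Otter--Dwass, a stable local limit theorem giving $\mathbb P(\#_{\mathrm{vertices}}T^\eta=n)\sim c_\alpha n^{-1-1/\alpha}$, substitution into (\ref{splitGW}), identification of the limit with $\nu_\alpha$) is the same one the paper sketches for the finite--variance Lemma \ref{lemGW1}; for Lemma \ref{lemGW2} itself the paper gives no argument and defers to \cite{HM12}, Section 5. The difficulty is that the one step you actually write down --- the asymptotic $p_{n-1}^{\mathrm{GW},\eta}(\lambda)\sim\eta(p)\,\frac{p!}{\prod_i m_i(\lambda)!}\,c_\alpha^{\,p-1}n^{-(p-1)(1+1/\alpha)}\prod_{i=1}^p s_i^{-1-1/\alpha}$ for a \emph{fixed} number $p$ of macroscopic parts, followed by a Riemann sum over the profile $(s_1,\dots,s_p)$ --- is precisely the finite--variance mechanism, and it contributes \emph{nothing} in the limit when $\alpha\in(1,2)$. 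Indeed, for fixed $p$ there are of order $n^{p-1}$ partitions with all parts of order $n$, so the total mass of this regime is of order $\eta(p)\,c_\alpha^{\,p-1}\,n^{-(p-1)/\alpha}$; after multiplying by $n^{\gamma}=n^{1-1/\alpha}$ this is $n^{1-p/\alpha}\to0$ for every $p\ge2$, since $\alpha<2$. Consistently, $\nu_\alpha$ only charges sequences with infinitely many positive entries (every branch point of $\mathcal T_\alpha$ has infinite degree), so no fixed-$p$ Riemann sum can converge to it.

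The surviving contribution comes from partitions in which the root has $p=p(n)\to\infty$ children, all but finitely many subtrees are microscopic, and the normalized macroscopic subtree sizes converge to the ranked jumps $\Xi_i/T_1$ in the definition of $\nu_\alpha$. Your displayed formula cannot be used there: for parts $\lambda_i=O(1)$ the estimate $\mathbb P(\#_{\mathrm{vertices}}T^\eta=\lambda_i)\sim c_\alpha\lambda_i^{-1-1/\alpha}$ is not valid and the factor $\prod_i s_i^{-1-1/\alpha}$ degenerates. What is needed is a joint treatment: condition on the root having $p$ children, represent the vector of the $p$ subtree sizes via the Otter--Dwass formula for forests as the increments of a length-$p$ heavy-tailed random walk bridge, and use the convergence of the large jumps of such conditioned walks to the jumps of a stable bridge; the weight $\eta(p)\sim\kappa p^{-1-\alpha}$ then combines with the local limit theorem to produce the size-biasing by $T_1$ and the constant $C_\alpha$. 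You allude to this ("the factor $\eta(p)$ is exactly what one gets from the L\'evy measure of an $\alpha$-stable subordinator"), but the proposal never sets up this $p\to\infty$ regime, which is where the entire limit lives; that is the genuine gap. Your final remark --- that the damping factor $(1-\lambda_1/n)$ in $(\mathsf H)$ controls the region $s_1\to1$ where $\nu_\alpha$ has infinite mass --- is correct and is indeed an essential part of the argument.
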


The measures $\nu_{\mathrm{Br}}$ and $\nu_{\alpha}$ are the dislocation measures of the Brownian and $\alpha$--stable tree, respectively, and are defined in Section \ref{sec:ssft}.
Together with the scaling limit results on MB--trees, this gives Theorem \ref{thm:AldousDuquesne} (i) and (ii) respectively. 
The proofs of these lemmas are not completely obvious. We give here a rough idea of the main steps of the proof of Lemma \ref{lemGW1}, and refer to \cite[Section 5]{HM12} for more details and for the proof of Lemma \ref{lemGW2}.

\textbf{Sketch of the main steps of the proof of Lemma \ref{lemGW1}.} Recall that $T^{\eta}$ denotes a Galton--Watson tree with offspring distribution $\eta$. To simplify, we assume that the support of $\eta$ generates $\mathbb Z$, so that $\mathbb P(\#_{\mathrm{vertices}} T^{\eta}=n)>0$ for all $n$ large enough.  The Otter--Dwass formula (or cyclic lemma) \cite[Chapter 6]{PitmanStFl} then implies that
$$
\mathbb P(\#_{\mathrm{vertices}}  T^{\eta}=n)=n^{-1} \mathbb P(S_n=-1)
$$
where $S_n$ is a random walk with i.i.d. increments of law $(\eta(i+1), i \geq -1)$. Together with the local limit Theorem, which ensures that $\mathbb P(S_n=-1)\underset{n \rightarrow \infty}\sim (2 \pi \sigma^2 n)^{-1/2}$, this leads to
$$
\mathbb P(\#_{\mathrm{vertices}}  T^{\eta}=n)\underset{n \rightarrow \infty}\sim (2 \pi \sigma^2)^{-1/2}n^{-3/2}.
$$
(We note that this argument is also fundamental in the study of large Galton--Watson trees via their contour functions).
Then the idea is to use this approximation in the definition of $p_n^{\mathrm{GW},\eta}$ to show that the two sums 
\begin{equation*}
\sqrt n \sum_{\lambda \in \mathcal P_{n}}p_{n}^{\mathrm{GW},\eta}(\lambda)\left(1-\frac{\lambda_1}{n}\right) f\left(\frac{\lambda}{n}\right) \quad \text{and} \quad 
 \frac{\sigma}{ \sqrt{2\pi}} \frac{1}{n} \sum_{\lambda_1 = \lceil n/2 \rceil}^{n} f\left(\frac{\lambda_1}{n},\frac{n-\lambda_1}{n},\ldots\right)\left(\frac{\lambda_1}{n}\right)^{-3/2} \left(\frac{n-\lambda_1}{n}\right)^{-3/2}
\end{equation*}
are asymptotically equivalent (this is the technical part), for all continuous functions $f:\mathcal S^{\downarrow} \rightarrow \mathbb R.$ 
The conclusion follows, since the second sum is a Riemann sum that converges to \linebreak $\sigma(\sqrt{2\pi})^{-1} \int_{1/2}^{1} f(x,1-x,\ldots)x^{-3/2}(1-x)^{-3/2} \mathrm dx.$

\subsubsection{Galton--Watson trees with arbitrary degree constraints}
\label{GWmore}

One may then naturally wonder if Theorem \ref{mainthm:leaves} could also be used to get the scaling limits of Galton--Watson trees conditioned to have $n$ leaves. The answer is yes, and moreover this can be done in a larger context, using a simple generalization of Theorem \ref{mainthm:leaves} and Theorem \ref{mainthm:vertices} to MB--trees with arbitrary degree constraints. This generalization was done by Rizzolo  \cite{Riz15} using an idea similar to the one presented below  Theorem \ref{mainthm:vertices} to get this theorem from Theorem \ref{mainthm:leaves}. It is quite heavy to state neatly, so we let the reader see the paper  \cite{Riz15} and focus here on the applications developed in this paper to Galton--Watson trees.

The classical theorems of Aldous and Duquesne on conditioned Galton--Watson trees can be extended to Galton--Watson trees conditioned to have a number of vertices with out--degree in a given set. To be more precise, fix $A \subset \mathbb Z_+$ and consider an offspring distribution $\eta$ with mean 1 and variance $0<\sigma^2<\infty$.  For integers $n$ for which such a conditioning is possible, let 
$T_n^{\eta,A}$ denote a version of a  $\eta-$Galton--Watson tree conditioned to have $n$ vertices with out--degree in $A$. For example, if $A=\mathbb Z_+$, this is the model of the previous section, whereas if $A=\{0\}$,  $T_n^{\eta,A}$ is a $\eta-$Galton--Watson tree conditioned to have $n$ leaves.

\begin{thm}[Kortchemski \cite{Kort12} and Rizzolo \cite{Riz15}]
As $n \rightarrow \infty$,
$$\left(\frac{T_n^{\eta,A}}{\sqrt n},\mu_n^{\eta,A}\right) \overset{\mathrm{(d)}}{\underset{\mathrm{GHP}} \longrightarrow} \left(\frac{2}{\sigma \sqrt{\eta(A)}} \mathcal T_{\mathrm{Br}},\mu_{\mathrm{Br}}\right).$$
\end{thm}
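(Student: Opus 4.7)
The strategy is to apply the extension of Theorem \ref{mainthm:vertices} to MB--sequences indexed by vertices with an arbitrary degree set, developed by Rizzolo in \cite{Riz15}. The first step is to verify that $(T_n^{\eta,A})$ is such an MB--sequence: conditionally on the root's out--degree being $p$ and on the way the $n$ (or $n-1$, depending on whether the root itself has out--degree in $A$) vertices with out--degree in $A$ split among the $p$ subtrees, the branching property of the unconditioned GW tree implies that the subtrees are independent GW trees conditioned on their respective numbers of $A$--vertices. The splitting distribution is then an analog of (\ref{splitGW}),
$$
p_n^{\mathrm{GW},\eta,A}(\lambda)\;\propto\; \eta(p)\cdot\frac{p!}{\prod_i m_i(\lambda)!}\cdot \prod_{i=1}^{p(\lambda)}\mathbb P\big(\#_A T^\eta = \lambda_i\big),
$$
with a minor bookkeeping adjustment depending on whether the root is itself counted as one of the $n$ $A$--vertices.

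The core of the proof is then to verify hypothesis $(\mathsf H)$ for these splitting distributions with $\gamma=1/2$ and $\nu = \frac{\sigma\sqrt{\eta(A)}}{2}\nu_{\mathrm{Br}}$; combined with the MB scaling limit theorem and the correspondence between dislocation measures and the Brownian tree recalled in Section \ref{sec:ssft}, this delivers exactly the rescaling $\frac{2}{\sigma\sqrt{\eta(A)}}\mathcal T_{\mathrm{Br}}$ (mirroring the derivation of Theorem \ref{thm:AldousDuquesne}(i) from Lemma \ref{lemGW1}). The key analytic input needed is the asymptotic
$$
\mathbb P\big(\#_A T^\eta = n\big)\;\underset{n\to\infty}{\sim}\;\frac{c_A}{n^{3/2}}
$$
for an explicit constant $c_A$ depending on $\sigma$ and $\eta(A)$.

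To obtain this asymptotic I would use a bivariate extension of the Otter--Dwass formula: if $X_1,X_2,\ldots$ are i.i.d.\ with law $\eta$ and $S_N=\sum_{i=1}^N(X_i-1)$, then the cyclic lemma gives
$$
\mathbb P\big(\#_{\mathrm v} T^\eta=N,\ \#_A T^\eta=n\big)=\frac{1}{N}\,\mathbb P\bigg(S_N=-1,\ \sum_{i=1}^N \mathbbm 1_{\{X_i\in A\}}=n\bigg).
$$
Summing over $N$ and applying a joint local limit theorem to the $2$--dimensional random walk $\big(S_N,\sum_{i\le N}\mathbbm 1_{X_i\in A}\big)$ -- whose increments have mean $(0,\eta(A))$ and a non--degenerate covariance -- localizes the dominant contribution at $N\sim n/\eta(A)$ and yields the $n^{-3/2}$ decay with the right constant. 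Injecting this asymptotic into $p_n^{\mathrm{GW},\eta,A}(\lambda)$ and proceeding as in the Riemann--sum argument sketched after Lemma \ref{lemGW1}, one matches the limit in $(\mathsf H)$ with the integral against $\frac{\sigma\sqrt{\eta(A)}}{2}\nu_{\mathrm{Br}}$.

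The main obstacle is precisely this bivariate local limit theorem: one must control the joint density uniformly enough to pass to the limit inside the sum in $(\mathsf H)$ (which involves test functions of the normalized partition $\lambda/n$), and to handle the case when $\eta$ is aperiodic versus lattice--valued, or when $0\in A$ so that $\eta(A)$ is bounded below trivially, along with the combinatorial bookkeeping arising when the root has out--degree in $A$. Once the $n^{-3/2}$ asymptotic is established with an explicit constant, the rest is a direct invocation of the MB framework of Section \ref{SCMB} together with its extension to arbitrary degree sets.
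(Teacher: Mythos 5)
Your proposal follows essentially the same route the paper attributes to Rizzolo: recognize $(T_n^{\eta,A})$ as a Markov--Branching sequence with arbitrary degree constraints, evaluate the splitting probabilities via a generalized (bivariate) Otter--Dwass formula to get the $n^{-3/2}$ asymptotic for $\mathbb P(\#_A T^\eta=n)$, and verify $(\mathsf H)$ with $\gamma=1/2$ and $\nu=\tfrac{\sigma\sqrt{\eta(A)}}{2}\nu_{\mathrm{Br}}$, which is exactly the normalization needed to produce $\tfrac{2}{\sigma\sqrt{\eta(A)}}\mathcal T_{\mathrm{Br}}$. The plan and the constant are correct, and the technical obstacle you identify (the joint local limit theorem and the periodicity bookkeeping) is precisely the part the survey itself flags as the hard step.
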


The proof of Rizzolo \cite{Riz15} relies on his theorem on scaling limits of MB--trees with arbitrary degree constraints. The most technical part is to evaluate the splitting probabilities, which is done by generalizing the Otter--Dwass formula. The proof of Kortchemski \cite{Kort12} is more in the spirit of the proofs of Aldous and Duquesne and consists in studying the contour functions of the conditioned trees. We note that \cite{Kort12} also includes cases where $\eta$ has an infinite variance, and is in the domain of attraction of a stable distribution (the limit is then a multiple of a stable tree). It should be possible to recover this more general case via the approach of Rizzolo.

\textbf{An example of application to combinatorial trees indexed by the number of leaves.} Let $T_n$ be a tree uniformly distributed amongst the set of rooted ordered trees with $n$ leaves with no vertex with out--degree 1.  One checks, using (\ref{lawGW}),  that $T_n$ is distributed as a $\eta$--Galton--Watson tree conditioned to have $n$ leaves, with $\eta$ defined by $\eta(i)=(1-2^{-1/2})^{i-1}, i \geq 2$, $\eta(1)=0$ and $\eta(0)=2-2^{1/2}$. The variance of $\eta$ is $4 (\sqrt 2- 1)$, so that finally, 
$$
\left(\frac{T_n}{\sqrt n},\mu_n\right) \overset{\mathrm{(d)}}{\underset{\mathrm{GHP}} \longrightarrow}\left(\frac{1}{ 2^{1/4}(\sqrt 2 -1)} \mathcal T_{\mathrm{Br}},\mu_{\mathrm{Br}}\right)
$$
where $\mu_n$ is the uniform probability on the leaves of $T_n$.

\subsection{P\'olya trees}
\label{sec:Polya}

The above results on conditioned Galton--Watson trees give the scaling limits of several sequences of combinatorial trees, as already mentioned. There is however a significant case which does not fall within the Galton--Watson framework, that of \emph{uniform P\'olya trees}. By P\'olya trees we simply mean rooted finite trees (non--ordered, non--labelled).  They are named after P\'olya \cite{Polya37} who developed an analytical treatment of this family of trees, based on generating functions. In this section, we let $T_n(\mathrm P)$ be uniformly distributed amongst the set of P\'olya trees with $n$ vertices. 

These trees are more complicated to study than uniform rooted trees with labelled vertices, or  uniform rooted, ordered trees, because of their lack of symmetry.  In this direction, Drmota and Gittenberger \cite{DrG10} showed that the shape of $T_n(\mathrm P)$ \emph{is not} a conditioned Galton--Watson tree. 
However Aldous \cite{Ald91} conjectured in 1991 that  the scaling limit of $(T_n(\mathrm P))$ should nevertheless be the Brownian tree, up to a multiplicative constant. 
Quite recently, several papers studied the scaling limits of P\'olya trees, with different points of view. Using techniques of analytic combinatorics,  Broutin and Flajolet \cite{BrF08} studied the scaling limit of the height of a uniform \emph{binary} P\'olya tree with $n$ vertices, whereas Drmota and Gittenberger \cite{DrG10} studied the profil of $T_n(\mathrm P)$  (the profile is the sequence of the sizes of each generation of the tree) and showed that it converges after an appropriate rescaling to the local time of a Brownian excursion. Marckert and Miermont  \cite{MM11} obtained a full scaling limit picture of uniform \emph{binary} P\'olya trees: by appropriate trimming procedures, they showed that rescaled by $\sqrt n$, they converge in distribution towards a multiple of the Brownian tree.  

More recently, with different methods, the following result was proved.

\begin{thm}[Haas--Miermont \cite{HM12} and Panagiotou--Stufler \cite{PS16+}]
\label{thmPolya}
As $n \rightarrow \infty$,
$$\left(\frac{T_n(\mathrm P)}{\sqrt n},\mu_n(\mathrm P)\right) \overset{\mathrm{(d)}}{\underset{\mathrm{GHP}} \longrightarrow}  \big(c_{\mathrm P}\mathcal T_{\mathrm{Br}}, \mu_{\mathrm{Br}}\big), \quad c_{\mathrm P} \sim 1.491$$
where $\mu_n(\mathrm P)$ denotes the uniform probability on the vertices of $T_n(\mathrm P)$.
\end{thm}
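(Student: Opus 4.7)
The strategy is to recognize $(T_n(\mathrm P),n \geq 1)$ as a Markov--Branching sequence indexed by the vertices and to verify Hypothesis $(\mathsf H)$ with $\gamma=1/2$ and a dislocation measure $\nu$ proportional to $\nu_\mathrm{Br}$. Theorem \ref{mainthm:vertices} then gives GHP--convergence to $(\mathcal T_{1/2,\nu},\mu_{1/2,\nu})$, and since $\nu_\mathrm{Br}$ is the dislocation measure of the Brownian CRT, the limit tree will be a multiple of $\mathcal T_\mathrm{Br}$. The scaling relation $\mathcal T_{\gamma,a\nu} \overset{\mathrm{(d)}}{=} a^{-1}\mathcal T_{\gamma,\nu}$ then pins down the constant $c_\mathrm{P}$.

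The MB--property is essentially combinatorial: conditionally on the (unordered) multiset of sizes of the subtrees at the root of the uniform Polya tree $T_n(\mathrm P)$, these subtrees are independent uniform Polya trees of the prescribed sizes. Writing $t_n$ for the number of Polya trees with $n$ vertices and $m_i(\lambda)$ for the number of parts of size $i$ in $\lambda$, a direct multiset count would give
\[
p_{n-1}^{\mathrm P}(\lambda) \;=\; \frac{1}{t_n} \prod_{i\geq 1} \binom{t_i + m_i(\lambda) -1}{m_i(\lambda)}, \qquad \lambda \in \mathcal P_{n-1}.
\]

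To verify $(\mathsf H)$, I would plug Otter's asymptotic $t_n \sim c\kappa^n n^{-3/2}$ into this formula, along the lines of the proof of Lemma \ref{lemGW1}. For a purely binary partition $\lambda = (\lambda_1,n-1-\lambda_1)$ with both parts of order $n$, the product collapses to $t_{\lambda_1}t_{n-1-\lambda_1}/t_n$, and Otter's estimate yields
\[
p_{n-1}^\mathrm{P}(\lambda) \;\sim\; \frac{c}{\kappa} \cdot n^{-3/2} \cdot \Big(\tfrac{\lambda_1}{n}\Big)^{-3/2} \Big(1 - \tfrac{\lambda_1}{n}\Big)^{-3/2}.
\]
Multiplying by $n^{1/2}(1-\lambda_1/n) f(\lambda/n)$ and summing over $\lambda_1 \in (n/2,n-1)$ then produces, up to a multiplicative constant, a Riemann sum converging to $\int (1-s_1) f \, d\nu_\mathrm{Br}$, which identifies the binary contribution to $\nu$ as a positive multiple of $\nu_\mathrm{Br}$.

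The hard part will be handling partitions that have, in addition to two macroscopic parts, a cloud of ``small'' parts whose sizes remain bounded or are $o(n)$. The multiset coefficients $\prod_i \binom{t_i + m_i -1}{m_i}$ and the combinatorial interactions between small parts have no clean analogue in the Galton--Watson setting, where the Otter--Dwass formula provided a sharp separation. One must show that, once summed over all possible small--part configurations, these contributions factor asymptotically into a single multiplicative constant expressible through the Polya tree generating function $T$ evaluated at its dominant singularity $\rho = 1/\kappa$, while simultaneously controlling partitions with three or more macroscopic parts (which should be negligible again by Otter's asymptotic). This uniform control on the entire sum is the technical crux. Once the correct proportionality constant $c_\mathrm{P}' > 0$ with $\nu = c_\mathrm{P}' \nu_\mathrm{Br}$ is identified, Theorem \ref{mainthm:vertices} combined with $\mathcal T_{1/2, c_\mathrm{P}' \nu_\mathrm{Br}} \overset{\mathrm{(d)}}{=} (c_\mathrm{P}')^{-1} \mathcal T_\mathrm{Br}$ yields the announced limit with $c_\mathrm{P} = (c_\mathrm{P}')^{-1}$; the numerical value $c_\mathrm{P} \approx 1.491$ should then fall out of the explicit computation in terms of $c$, $\kappa$ and $T(\rho)$.
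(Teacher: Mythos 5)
Your overall route (identify the splitting probabilities, plug in Otter's asymptotic $t_n\sim c\kappa^n n^{-3/2}$, show the sum in $(\mathsf H)$ is asymptotic to a Riemann sum for $\nu_{\mathrm{Br}}$ up to a constant, then apply Theorem \ref{mainthm:vertices} and the scaling relation $\mathcal T_{\gamma,a\nu}\overset{\mathrm{(d)}}{=}a^{-1}\mathcal T_{\gamma,\nu}$) is exactly the strategy of \cite{HM12}, and your formula $p_{n-1}^{\mathrm P}(\lambda)=t_n^{-1}\prod_i\binom{t_i+m_i(\lambda)-1}{m_i(\lambda)}$ for the law of the subtree sizes above the root is the correct one. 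But there is a genuine gap at the very first step: the sequence $(T_n(\mathrm P))$ is \emph{not} Markov--Branching, so Theorem \ref{mainthm:vertices} does not apply to it directly. Conditionally on the multiset of subtree sizes at the root, the collection of root subtrees of a uniform P\'olya tree is uniform over \emph{multisets} of P\'olya trees with those sizes, and this is not the law of independent uniform P\'olya trees taken up to reordering: for instance, two independent uniform trees of size $j$ coincide with probability $1/t_j$, whereas a uniform $2$--element multiset from $\mathbb T_j$ has a repeated element with probability $2/(t_j+1)$. Equivalently, the distribution $\mathcal V^{\mathbf p}_n$ built from your $p_{n-1}^{\mathrm P}$ via the MB recursion is not the uniform distribution on P\'olya trees with $n$ vertices. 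The paper explicitly flags this (``it is easy to check that the sequence $(T_n(\mathrm P))$ is not Markov--Branching'').

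The missing ingredient is a coupling argument: one constructs a genuine MB--sequence $(T'_n(\mathrm P))$ having the \emph{same} splitting probabilities $(p_n)$, proves that $\mathbb E\big[d_{\mathrm{GHP}}(n^{-\varepsilon}T_n(\mathrm P),n^{-\varepsilon}T'_n(\mathrm P))\big]\to 0$ for every $\varepsilon>0$ (intuitively, the discrepancy caused by coinciding subtrees is concentrated on small subtrees and is negligible at scale $n^{1/2}$), applies Theorem \ref{mainthm:vertices} to $(T'_n(\mathrm P))$, and transfers the limit back to $(T_n(\mathrm P))$. Without this step your argument proves convergence for the wrong sequence of trees. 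The remaining technical work you identify --- uniform control of the contributions of small parts and of partitions with three or more macroscopic parts in the verification of $(\mathsf H)$ --- is indeed the other crux and is handled in \cite{HM12} much as you sketch; the alternative proof of \cite{PS16+} avoids the MB framework altogether by exhibiting $T_n(\mathrm P)$ as a large conditioned Galton--Watson tree decorated with $O(\log n)$--sized attachments.
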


The proof of \cite{HM12} uses connections with MB--trees, whereas that of \cite{PS16+} uses, still, connections with Galton--Watson trees. Let us first quickly discuss the MB point of view. 
It is easy to check that the sequence ($T_n(\mathrm P)$) is not Markov--Branching (this is left as an exercise!), however it is not far from being so. It is actually possible to couple this sequence with a Markov--Branching sequence $(T'_n(\mathrm P))$ such that 
$$\mathbb E\left[d_{\mathrm{GHP}}(n^{-\varepsilon} T_n(\mathrm P),n^{-\varepsilon} T'_n(\mathrm P))\right] \underset{n \rightarrow \infty}\longrightarrow 0, \quad \forall \varepsilon>0$$
and $(T_n(\mathrm P))$ and $(T'_n(\mathrm P))$ have the same splitting probabilities $(p_n)$ (by splitting probabilities for trees that are not MB, we mean the distribution of the sizes of the subtrees above the root). These splitting probabilities are given here by 
$$
p_{n-1}(\lambda)=\frac{\prod_{j=1}^{n-1} \# F_j(m_j(\lambda))}{ \# \mathbb T_n}, \quad \text{for } \lambda\in \mathcal P_{n-1}
$$
where $m_j(\lambda)=\{i:\lambda_i=j\}$, $\# \mathbb T_n$ is the number of rooted trees with $n$ vertices
and $F_j(k)$ denotes the set of multisets with $k$ elements in $\mathbb T_j$ (with the convention $F_j(0):=\{\emptyset\}$). It remains to check that these splitting probabilities satisfy $(\mathsf H)$ with appropriate parameters and to do this, we use the result of Otter \cite{Otter48}:
$$\# \mathbb T_n \underset{n \rightarrow \infty}\sim \mathrm{c} \frac{\kappa^{n}}{n^{3/2}}, \quad \text{for some }\mathrm{c}>0, \kappa>1.$$
Very roughly, this allows to conclude that the two following sums 
$$\sqrt n \sum_{\lambda \in \mathcal P_{n}}p_n(\lambda)\left(1-\frac{\lambda_1}{n}\right) f\left(\frac{\lambda}{n}\right) \quad \text{and} \quad \frac{\mathrm c}{n} \sum_{\lambda_1 = \lceil (n-1)/2 \rceil}^{n-1} f\left(\frac{\lambda_1}{n},\frac{n-\lambda_1}{n},0,\ldots \right)\left(\frac{\lambda_1}{n}\right)^{-3/2} \left(\frac{n-\lambda_1}{n}\right)^{-3/2}$$
are asymptotically equivalent, 
so that finally, using that the second sum is a Riemann sum, $(\mathsf H)$ holds with parameters $\gamma=1/2$ and $\nu=\nu_{\mathrm{Br}}/c_P$, with $c_{\mathrm P}=\sqrt 2/(\mathrm{c} \sqrt{\pi})$. The method of \cite{PS16+} is different. It consists in showing that asymptotically $T_n(\mathrm P)$ can be seen as  a large finite--variance critical Galton--Watson tree of random size concentrated around a constant times $n$ on which small subtrees of size $O(\log(n))$ are attached. The conclusion then follows from the classical result by Aldous on scaling limits of Galton--Watson trees.

Both methods can be adapted to P\'olya trees with other degree constraints. In \cite{HM12} uniform P\'olya trees with $n$ vertices having out--degree in $\{0,m\}$ for some fixed integer $m$, or out--degree at most $m$, are considered.  More generally, in  \cite{PS16+},  uniform P\'olya trees with $n$ vertices having out--degree in a fixed set $A$ (containing at least 0 and an integer larger than 2) are studied. In all cases, the trees rescaled by $\sqrt n$ converge in distribution towards a multiple of the Brownian tree.

\textbf{Further result.} To complete the picture on combinatorial trees asymptotics, we mention a recent result by Stufler on \emph{unrooted} trees, that was conjectured by Aldous, but remained open for a while. 

\begin{thm}[Stufler \cite{Stu16+}]
Let $T_n^*(\mathrm P)$ be uniform amongst the set of unrooted trees with $n$ vertices $($unordered, unlabelled$)$. Then,
$$\frac{T_n^*(\mathrm P)}{\sqrt n} \overset{\mathrm{(d)}}{\underset{\mathrm{GH}}\longrightarrow}  c_{\mathrm P}\mathcal T_{\mathrm{Br}}$$
$($with the same $c_{\mathrm P}$ as in Theorem \ref{thmPolya}$)$.
\end{thm}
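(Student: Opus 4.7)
The plan is to transfer the scaling limit from rooted to unrooted Pólya trees via a rerooting argument combined with the generic triviality of automorphism groups. For any unrooted tree $U$ with $n$ vertices, the number of inequivalent rooted versions of $U$ equals the number of orbits of $\mathrm{Aut}(U)$ on its vertex set, which equals $n$ precisely when $\mathrm{Aut}(U)$ is trivial. Consequently, if $T_n(\mathrm P)$ denotes a uniform rooted Pólya tree and $U(T_n(\mathrm P))$ its underlying unrooted tree, then for every $U$ with $|\mathrm{Aut}(U)|=1$,
\[
\mathbb P\bigl(U(T_n(\mathrm P))=U\bigr)=\frac{n}{|\mathbb T_n|},\qquad \mathbb P\bigl(T_n^*(\mathrm P)=U\bigr)=\frac{1}{|\mathcal T_n|}.
\]
Otter's asymptotic enumeration of rooted and unrooted unlabelled trees yields $n|\mathcal T_n|/|\mathbb T_n|\to 1$, so the pointwise ratio of these two mass functions tends to $1$ on the set of asymmetric trees.

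The crucial technical step is then the concentration
\[
\mathbb P\bigl(|\mathrm{Aut}(T_n^*(\mathrm P))|=1\bigr)\to 1\quad\text{as }n\to\infty,
\]
together with the analogous statement under the law of $U(T_n(\mathrm P))$. Granted this, the distributions of $U(T_n(\mathrm P))$ and of $T_n^*(\mathrm P)$ are asymptotically equivalent in total variation: both concentrate on the set of asymmetric trees, and on that set their densities agree asymptotically.

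The conclusion then follows in two final steps. First, Theorem~\ref{thmPolya} gives $T_n(\mathrm P)/\sqrt n\to c_{\mathrm P}\mathcal T_{\mathrm{Br}}$ in distribution for the rooted Gromov--Hausdorff topology, which implies the same convergence for $U(T_n(\mathrm P))/\sqrt n$ in the coarser unrooted GH topology simply by forgetting the root. Second, total variation closeness transfers this convergence in distribution to $T_n^*(\mathrm P)/\sqrt n$. The principal obstacle is the automorphism concentration. Since the automorphism group of a Pólya tree is generated by permutations of root-preservingly isomorphic children subtrees, controlling $|\mathrm{Aut}|$ reduces to bounding the expected number of vertices at which two descending subtrees coincide up to isomorphism. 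Using either the approximate Markov--Branching structure from Section~\ref{sec:Polya} or the Galton--Watson coupling of Panagiotou--Stufler, one shows that the expected contribution from macroscopic symmetric pairs of subtrees vanishes; the contribution from small fringe subtrees of size $O(\log n)$ requires sharper fringe-distribution estimates to confirm that it does not produce nontrivial automorphisms with positive limiting probability. This combinatorial control of fringe symmetries is the heart of the argument.
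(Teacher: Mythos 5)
Your reduction breaks at exactly the step you call crucial: both quantitative claims underpinning it are false. First, Otter's asymptotics do \emph{not} give $n|\mathcal T_n|/|\mathbb T_n|\to 1$. One has $|\mathbb T_n|\sim c\,\kappa^n n^{-3/2}$ with $c\approx 0.4399$ (as quoted in Section \ref{sec:discrete}) and $|\mathcal T_n|\sim c'\,\kappa^n n^{-5/2}$ with $c'\approx 0.5349$, so $n|\mathcal T_n|/|\mathbb T_n|\to c'/c\approx 1.216$. Since $|\mathbb T_n|=\sum_U \#\{\text{vertex-orbits of }\mathrm{Aut}(U)\}$, this says the mean number of vertex-orbits of a uniform unrooted tree is asymptotic to $(c/c')\,n\approx 0.82\,n$, strictly below $n$; that alone forces $\liminf_n \mathbb P\bigl(|\mathrm{Aut}(T_n^*(\mathrm P))|>1\bigr)\geq 1-c/c'>0$, so your concentration claim $\mathbb P(|\mathrm{Aut}(T_n^*(\mathrm P))|=1)\to 1$ cannot hold. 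In fact the situation is much worse: a uniform unlabelled tree contains with high probability a linear number of ``cherries'' (vertices carrying two indistinguishable leaf-neighbours), each contributing a nontrivial automorphism, so $|\mathrm{Aut}(T_n^*(\mathrm P))|$ is typically exponentially large. Consequently the laws of $U(T_n(\mathrm P))$ and of $T_n^*(\mathrm P)$ are not asymptotically equal in total variation in the way you assert: rooting size-biases by the number of vertex-orbits, and this bias does not vanish. A rerooting argument could conceivably be salvaged by proving instead that the number of vertex-orbits, divided by $n$, concentrates around $c/c'$ (so that the Radon--Nikodym derivative between the two laws tends to $1$ in probability, and Scheff\'e gives vanishing total variation distance), but that is a genuinely different and harder lemma than the one you state, and you give no argument for it.

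For comparison, the route indicated in the text (following Stufler) sidesteps automorphism counting entirely: it uses the decomposition of Bodirsky, Fusy, Kang and Vigerske to approximate the uniform unrooted tree by uniform \emph{rooted} P\'olya trees and then applies Theorem \ref{thmPolya}. Your surrounding steps are fine in spirit --- forgetting the root only coarsens the topology, and total variation closeness would indeed transfer the limit --- but as written the argument does not prove the theorem.
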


The main idea to  prove this scaling limit of unrooted uniform trees consists in using a decomposition due to Bodirsky, Fusy, Kang and Vigerske \cite{BFKV11} to approximate $T_n^*(\mathrm P)$ by uniform \emph{rooted} P\'olya trees and then use Theorem \ref{thmPolya}. This result more generally holds for unrooted trees with very general degree constraints.

\subsection{Dynamical models of tree growth}
\label{sec:cvdynam}

As mentioned in Section \ref{sec:examples}, the prototype example of Rémy's algorithm $(T_n(\mathrm R),n \geq 1)$ is strongly connected to Galton--Watson trees since the shape of $T_n(\mathrm R)$ (to which has been subtracted the edge between the root and the first branch point) is distributed as the shape of a binary critical Galton--Watson tree conditioned to have $2n-1$ vertices. This implies that
$$
\left(\frac{T_n(\mathrm R)}{\sqrt n},\mu_n(\mathrm R)\right) \overset{\mathrm{(d)}}{\underset{\mathrm{GHP}}\longrightarrow} \left(2\sqrt 2 \mathcal T_{\mathrm{Br}}, \mu_{\mathrm{Br}}\right).
$$ 
Similar scaling limits results actually extends to most of the tree--growth models seen in Section \ref{sec:examples}. To see this, it suffices to check that their splitting probabilities satisfy Hypothesis $(\mathsf H)$. Technically, this mainly relies on Stirling's formula and/or balls in urns schemes. Note however that the convergence in distribution is not fully satisfactory in these cases, since the trees are recursively built on a same probability space, and we may hope to have convergence in a stronger sense. We will see below that this is indeed the case. 

\subsubsection{Ford's alpha model}

For Ford's $\alpha$-model, with $\alpha \in (0,1)$, it is easy to check (see \cite{HMPW08}) that the splitting probabilities $q_n^{\mathrm{Ford},\alpha}$ satisfy hypothesis $(\mathsf H)$ with $\gamma=\alpha$ and $\nu=\nu_{\mathrm{Ford},\alpha}$,  
where $\nu_{\mathrm{Ford},\alpha}$ is a binary measure on $\mathcal S^{\downarrow}$ \linebreak  ($\nu_{\mathrm{Ford},\alpha }(s_1+s_2<1)=0$) defined by
\vspace{-0.10cm}
$$
\nu_{\mathrm{Ford},\alpha}(s_1 \in \mathrm dx)=\frac{ \mathbbm1_{\{1/2 \leq x \leq 1\}}}{\Gamma(1-\alpha )} \left(\alpha (x(1-x))^{-\alpha -1} + (2-4 \alpha )(x(1-x))^{-\alpha } \right)  \mathrm dx.
$$
This, together with Theorem \ref{mainthm:vertices} leads for $\alpha \in (0,1)$ to the convergence:
\begin{thm}[\cite{HMPW08} and \cite{HM12}] For all $\alpha  \in (0,1)$, 
$$
\left(\frac{T_n(\alpha )}{n^\alpha },\mu_n(\alpha)\right) \overset{\mathrm{(d)}}{\underset{\mathrm{GHP}}\longrightarrow} \left(\mathcal T_{\alpha ,\nu_{\mathrm{Ford},\alpha }}, \mu_{\alpha,\nu_{\mathrm{Ford},\alpha }}\right).$$
\end{thm}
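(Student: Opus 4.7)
The plan is to apply Theorem \ref{mainthm:leaves} to the MB--sequence associated to Ford's $\alpha$--model. The verification proceeds in three steps.

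First, I would check the MB--property: removing from each $T_n(\alpha)$ the unique edge adjacent to the root gives a tree $\widetilde T_n(\alpha)$ with $n$ leaves, and $(\widetilde T_n(\alpha), n \geq 1)$ is an MB--sequence indexed by the leaves with splitting probabilities $q_n^{\mathrm{Ford},\alpha}$ as recalled in Section~\ref{sec:MB}. This is proved by induction on $n$: conditionally on the first split of the root into subtrees with $k$ and $n-k$ leaves ($k \geq n/2$), the weights in each subtree at subsequent steps obey a two-color P\'olya-type urn with parameters depending only on $(k-\alpha, n-k-\alpha)$, and a standard urn-scheme argument shows that each subtree, re-indexed by the leaves it contains, evolves independently as a Ford $\alpha$--model of its own size. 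The explicit formula for $q_n^{\mathrm{Ford},\alpha}(k,n-k)$ then follows by a direct computation of this urn.

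Second, and this is the technical heart, I would verify that $(q_n^{\mathrm{Ford},\alpha})$ satisfies Hypothesis $(\mathsf H)$ with $\gamma = \alpha$ and $\nu = \nu_{\mathrm{Ford},\alpha}$. Since the measures are supported on binary partitions, only $\lambda = (k, n-k)$ with $k \geq n/2$ contributes. Using the classical asymptotic $\Gamma(m+a)/\Gamma(m+b) \sim m^{a-b}$ together with the identities $\binom{n}{k} = \frac{n}{k(n-k)}\frac{\Gamma(n)}{\Gamma(k)\Gamma(n-k)}$ and $\binom{n-2}{k-1} = \frac{\Gamma(n-1)}{\Gamma(k)\Gamma(n-k)}$, one gets, for $k = \lfloor nx \rfloor$ with $x \in (1/2,1)$,
\[
q_n^{\mathrm{Ford},\alpha}(k,n-k) \sim \frac{2}{\Gamma(1-\alpha)}\, n^{-\alpha-1}\left[\frac{\alpha}{2}(x(1-x))^{-\alpha-1} + (1-2\alpha)(x(1-x))^{-\alpha}\right],
\]
uniformly on compact subsets of $(1/2,1)$. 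Multiplying by $n^{\alpha}(1 - k/n) f(k/n, (n-k)/n, 0, \ldots)$ and summing over $k \in \{\lceil n/2 \rceil, \ldots, n-1\}$ yields a Riemann sum on $[1/2,1]$ converging to $\int (1-s_1) f(\mathbf s)\, \nu_{\mathrm{Ford},\alpha}(d\mathbf s)$, as required. The atom $q_n((n))$ drops out thanks to the $(1 - \lambda_1/n)$ factor.

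The main obstacle is controlling the contribution near $x = 1$, where $\nu_{\mathrm{Ford},\alpha}$ has a singularity of order $(1-x)^{-\alpha-1}$; multiplication by $(1-s_1)$ tames it into an $(1-x)^{-\alpha}$ singularity, which is integrable precisely because $\alpha < 1$. To turn this heuristic into a genuine limit, I would establish a uniform upper bound of the form $(1 - k/n) q_n^{\mathrm{Ford},\alpha}(k,n-k) \leq C n^{-\alpha-1}(1 - k/n)^{-\alpha}$ for all $k \geq n/2$ and $n$ large, which follows from sharp Stirling estimates on $\Gamma(k-\alpha)\Gamma(n-k-\alpha)/\Gamma(n-\alpha)$, and then invoke dominated convergence on the Riemann sum. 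Once $(\mathsf H)$ is in place, Theorem \ref{mainthm:leaves} gives the GHP convergence of $(\widetilde T_n(\alpha)/n^\alpha, \mu_n(\alpha))$ to $(\mathcal T_{\alpha, \nu_{\mathrm{Ford},\alpha}}, \mu_{\alpha, \nu_{\mathrm{Ford},\alpha}})$. The removed root--edge has length $1$, which is $o(n^\alpha)$, so the same limit holds for $T_n(\alpha)/n^\alpha$; moreover $\nu_{\mathrm{Ford},\alpha}$ is conservative and binary, so the limiting tree is a.s.\ binary, consistent with the discrete picture.
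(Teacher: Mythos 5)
Your proposal follows exactly the paper's route: establish the Markov--Branching property of the root--edge--trimmed trees with splitting probabilities $q_n^{\mathrm{Ford},\alpha}$, verify Hypothesis $(\mathsf H)$ with $\gamma=\alpha$ and $\nu=\nu_{\mathrm{Ford},\alpha}$ via Gamma--function asymptotics and a Riemann--sum limit (the paper defers this computation to \cite{HMPW08}), and then invoke the main scaling--limit theorem for MB--sequences indexed by leaves. Your asymptotic for $q_n^{\mathrm{Ford},\alpha}(k,n-k)$ matches the stated density of $\nu_{\mathrm{Ford},\alpha}$, and your handling of the integrable singularity at $s_1=1$ (where $\alpha<1$ is used) is correct, so the argument is sound and essentially identical to the paper's.
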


This result was actually first proved in \cite{HMPW08}, using the fact that the sequence $(T_n(\alpha))$ is Markov--Branching and consistant. Chen and Winkel \cite{CW13} then  improved this result by showing that the convergence holds in probability. 

For $\alpha=1/2$ (Rémy's algorithm), note that we recover the result obtained via the Galton--Watson approach. Note also that the case $\alpha=1$ is not included in the hypotheses of the above theorem, however the trees $T_n(\alpha)$ are then deterministic (comb trees) and it is clear that they converge after rescaling by $n$ to a segment of length 1, equipped with the Lebesgue measure. This tree is a \emph{general fragmentation tree} as introduced by Stephenson \cite{Steph13}, with pure erosion (and no dislocation). 
When $\alpha=0$, we observe a different regime, the height of a typical leaf in the tree growth logarithmically, and there is no convergence in the GH--sense of the whole tree.  

\subsubsection{$k$--ary growing trees}
\label{scalkary}

Observing the asymptotic behavior of the sequence of trees constructed via Rémy's algorithm, it is natural to wonder how this may change when deciding to branch at each step $k-1$ branches on the pre--existing tree, instead of one. For this $k-$ary model, it was shown in \cite{HS15} that $q_n^k$ satisfies $(\mathrm H)$ with $\gamma=1/k$ and $\nu=\nu_k$ where 
$$
\nu_k({\mathrm d \mathbf s})=\frac{(k-1)!}{k(\Gamma(\frac{1}{k}))^{k-1}} \prod_{i=1}^k s_i^{-(1-1/k)}\left(\sum_{i=1}^k \frac{1}{1-s_i}\right)\mathbbm 1_{\{s_1\geq s_2 \geq ... \geq s_k\}}\mathrm d \mathbf s,
$$  
is supported on the simplex of dimension $k-1$.
Together with Theorem  \ref{mainthm:leaves} this gives the limit in distribution of the sequence $(T_n(k),n \geq 1)$. 
Besides, using some connections with the Chinese Restaurant Processes of Dubins and Pitman (see \cite[Chapter3]{PitmanStFl} for a definition) and more general urns schemes, it was shown that these models converge in probability (however this second approach did not give the distribution of the limiting tree.) Together, these two methods lead to:
\begin{thm}[\cite{HS15}]
\label{thm:kary}
Let $\mu_n(k)$ be the uniform measure on the leaves of $T_n(k)$.
Then,
$$
\left(\frac{T_n(k)}{n^{1/k}}, \mu_n(k) \right)\ \overset{\mathbb{P}}{\underset{\mathrm{GHP}}\longrightarrow} \ \left(\mathcal T_{k},\mu_k\right)
$$
where  \emph{($\mathcal T_k, \mu_k$)} is a self--similar fragmentation tree, with index of self--similarity $1/k$ and dislocation measure $\nu_k$.\end{thm}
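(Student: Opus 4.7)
The plan is to combine two complementary approaches. Analytically, I would recognize that, after removing the edge adjacent to the root, the sequence $(\overline T_m(k))$ is Markov--Branching indexed by leaves with the splitting probabilities $q_m^k$ displayed in Section~\ref{sec:MB}; verifying Hypothesis $(\mathsf H)$ for these would then unlock Theorem~\ref{mainthm:leaves}. Probabilistically, I would upgrade the resulting distributional GHP--convergence to convergence in probability by exploiting the sequential, urn--driven nature of the construction on the common probability space carrying $(T_n(k))_{n\geq 1}$.

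For the first step I would apply Stirling's formula to $\overline q_m(\mathbf n)$. The prefactor $(n-2)!/\Gamma(1/k + n - 1)$ is of order $n^{1-1/k}$, and $\Gamma(1/k + n_i)/n_i! \sim n_i^{1/k - 1}$ as $n_i\to\infty$. Setting $s_i = n_i/n$ and interpreting the sum over compositions $\mathbf n$ with $\mathbf n^{\downarrow}=\lambda$ as a Riemann sum on the simplex $\{s_1\geq\cdots\geq s_k\geq 0,\sum s_i=1\}$, the inner sum $\sum_{j}\frac{n_1!}{(n_1-j+1)!}\frac{(n-j-1)!}{(n-2)!}$ contributes, after symmetrisation over the position of the selected coordinate, the factor $\sum_i (1-s_i)^{-1}$. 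Multiplying by $m^{1/k}$ and by the prefactor $1-\lambda_1/m$, the limit identifies $\nu_k$ exactly in the form stated, and Theorem~\ref{mainthm:leaves} then yields $(T_n(k)/n^{1/k}, \mu_n(k)) \to (\mathcal T_k, \mu_k)$ in distribution for $d_{\mathrm{GHP}}$.

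For the second step I would track, along the dynamic construction, the numbers of edges carried by any fixed finite collection of subtrees rooted at early branch points of $T_n(k)$. By the uniform choice of the edge to split at each time step, these sizes evolve as a generalised $k$--colour P\'olya urn, whose normalised proportions converge almost surely by the classical martingale argument of Athreya and Karlin; hence any finite skeleton of $n^{-1/k}T_n(k)$, equipped with its rescaled edge lengths, converges almost surely in $d_{\mathrm{GH}}$ to a limiting finite tree of deterministic shape and random edge lengths. Combined with the tightness bound $\mathbb P(\mathsf{ht}(T_n)\geq x\,n^{1/k})\leq C_p x^{-p}$ from Section~\ref{sec:proof} (valid under $(\mathsf H)$), this promotes the skeleton convergence to a convergence in probability of the full rescaled measured trees towards some random compact measured real tree, via the tightness criterion recalled there. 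Distributional uniqueness inherited from the first step then identifies this limit as $(\mathcal T_k,\mu_k)$.

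The main obstacle will be the Stirling/Riemann--sum analysis in Step~1: both numerator and denominator of the inner sum involve terms near size $n$, and one has to carefully extract the symmetrised factor $\sum_i (1-s_i)^{-1}$ appearing in $\nu_k$ while correctly accounting for the combinatorial factor relating ordered compositions $\mathbf n$ to the unordered partition $\mathbf n^{\downarrow}=\lambda$ (in particular, the multiplicities $m_i(\lambda)$). A secondary subtlety is matching the two arguments: the urn--based step yields convergence in probability on the common probability space but without identifying the limit, while Theorem~\ref{mainthm:leaves} supplies the law; pinning them together requires that the tightness bound be strong enough to extend the skeleton--level almost sure convergence to the whole tree in $d_{\mathrm{GHP}}$.
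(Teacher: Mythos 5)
Your plan coincides with the paper's (and \cite{HS15}'s) two--pronged argument: verify $(\mathsf H)$ for the splitting probabilities $q_m^{k}$ with $\gamma=1/k$ and $\nu=\nu_k$ and invoke Theorem \ref{mainthm:leaves} for the distributional GHP--limit, then separately use urn--scheme / Chinese--restaurant--type martingale arguments along the dynamic construction to obtain convergence in probability on the common probability space, the first step supplying the identification of the limit that the second step alone does not give. One small slip in your Step 1: the prefactor $(n-2)!/\Gamma(1/k+n-1)=\Gamma(n-1)/\Gamma(n-1+1/k)$ is of order $n^{-1/k}$, not $n^{1-1/k}$, which is precisely what balances the factor $m^{1/k}$ required by $(\mathsf H)$.
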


Interestingly, using the approximation by discrete trees, it is possible to show that randomized versions of the limiting trees $\mathcal T_{k},k\geq 2$ -- note that $\mathcal T_2$ is the Brownian tree up to a scaling factor -- can be embedded into each other so as to form an increasing (in $k$) sequence of trees \cite[Section 5]{HS15}.

In Section \ref{sec:multitype}  we will discuss a generalization of this model. Here, we glue at each step star--trees with $k-1$ branches. However more general results are available when deciding to glue more general tree structures, with possibly a random number of leaves.    

\subsubsection{Marginals of stable trees}
\label{sec:marginals}

For $\beta \in (1,2]$, the sequence $(T_n(\beta),n\geq 1)$ built by Marchal's algorithm provides, for each $n$, a tree that is distributed as the shape of the subtree of the stable tree $\mathcal T_{\beta}$ spanned by $n$ leaves  taken independently according to $\mu_{\beta}$. Duquesne and Le Gall \cite{DLG02} showed that $T_n(\beta)$ is distributed as a Galton--Watson tree whose offspring distribution has probability generating function $z+\beta^{-1}(1-z)^{\beta}$, conditioned to have $n$ leaves. As so, it is not surprising that appropriately rescaled it should converge to the $\beta$--stable tree. Marchal \cite{Mar06} proved an almost--sure \emph{finite--dimensional convergence}, whereas the results of \cite{HMPW08} give the convergence in probability for the GHP--topology.
Additional manipulations even lead to an almost--sure convergence for the GHP--topology:
\begin{thm}[\cite{CH13}]
Let $\mu_n(\beta)$ be the uniform measure on the leaves of $T_n(\beta)$. Then
$$
\left(\frac{T_n(\beta)}{n^{\beta}},\mu_n(\beta)\right) \ \overset{\mathrm{a.s.}}{\underset{\mathrm{GHP}}\longrightarrow} \ \left(\beta \mathcal T_{\beta},\mu_\beta\right)
$$
\end{thm}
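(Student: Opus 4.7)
The approach is to place the entire sequence $(T_n(\beta), n\ge 1)$ together with its limit $\beta\mathcal T_\beta$ on a single probability space via the Duquesne--Le Gall--Marchal identification, and then to strengthen the in-probability GHP convergence of \cite{HMPW08} into an almost-sure statement by exploiting this explicit coupling. Concretely, fix a realisation of $(\mathcal T_\beta, \mu_\beta)$ and, given it, sample an i.i.d.\ sequence $(L_i)_{i\ge 1}$ according to $\mu_\beta$. Set $\mathcal R_n := [[\rho,L_1]]\cup\cdots\cup[[\rho,L_n]]\subset \beta\mathcal T_\beta$ for the reduced real subtree spanned by the root and $L_1,\ldots,L_n$, and define $\widetilde T_n$ to be $\mathcal R_n$ endowed with its \emph{combinatorial} graph metric (each edge of the reduced tree counted as length $1$) and with the empirical measure $\widetilde\mu_n := n^{-1}\sum_{i=1}^n \delta_{L_i}$. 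By Marchal's construction (recalled in Section \ref{sec:marginals}), $\widetilde T_n$ has the same law as $T_n(\beta)$, and the family $(\widetilde T_n)$ is automatically consistent as $n$ increases.

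Under this coupling the proof splits into two parts. First, working inside the fixed ambient space $\beta\mathcal T_\beta$, the leaves $(L_i)$ are a.s.\ dense in $\mathrm{supp}(\mu_\beta)=\mathcal L(\mathcal T_\beta)$, hence the increasing sets $\mathcal R_n$ a.s.\ exhaust $\beta\mathcal T_\beta$ in Hausdorff distance, and by the Glivenko--Cantelli theorem the empirical measures converge weakly to $\mu_\beta$ a.s. It remains to compare the intrinsic metric of $\beta\mathcal T_\beta$ restricted to $\mathcal R_n$ with the rescaled graph metric $n^{-(1-1/\beta)}d_{\mathrm{gr}}$ on $\widetilde T_n$: once these two metrics on the common set $\mathcal R_n$ are uniformly close, GHP-closeness is immediate. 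Marchal's almost-sure finite-dimensional convergence is precisely the pointwise version of this comparison, stating that for each fixed $k$ the two metrics agree asymptotically on $\mathcal R_n^{(k)}$, the subtree spanned by $L_1,\ldots,L_k$.

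The main obstacle is to make the metric comparison uniform, i.e.\ to pass from the $k$-leaf skeleton to the full tree $\mathcal R_n$. The strategy, following \cite{CH13}, is to couple Marchal's a.s.\ finite-dimensional result with a tightness estimate controlling $d_{\mathrm{GH}}(n^{-(1-1/\beta)}T_n(\beta),n^{-(1-1/\beta)}T_n(\beta)^{(k)})$ uniformly in $n$. For this one invokes the MB-structure: the splitting probabilities $q_n^{\mathrm{Marchal},\beta}$ satisfy Hypothesis $(\mathsf H)$ with $\gamma=1-1/\beta$, and the moment bound of Section \ref{sec:proof} then controls, for every $p>0$, the $p$-th moments of the rescaled heights of the pendant subtrees of $T_n(\beta)$ attached to its $k$-leaf skeleton, conditionally on their sizes $m_1,\ldots,m_p$. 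Under the coupling, the normalized sizes $m_j/n$ coincide with the $\mu_\beta$-masses of the components of $\beta\mathcal T_\beta\setminus \mathcal R_n^{(k)}$, which tend a.s.\ to zero as $k\to\infty$ by diffuseness of $\mu_\beta$. Combining these ingredients, a Borel--Cantelli argument along a sufficiently sparse subsequence $n_\ell\uparrow\infty$ together with the monotonicity $\mathcal R_{n_\ell}\subset \mathcal R_{n_{\ell+1}}$ promotes the in-probability estimate to almost-sure convergence on the full sequence; letting $k\to\infty$ then yields the claim. The delicate step, which is genuinely the hard part, is extracting from the polynomial MB-tightness bound a decay rate strong enough for Borel--Cantelli to apply without spoiling the coupling.
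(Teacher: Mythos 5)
Your outline matches the route the paper itself points to for this imported result: the survey offers no proof, but attributes the theorem to \cite{CH13} and names exactly the ingredients you assemble --- the identification of $T_n(\beta)$ with the combinatorial shape of the reduced subtree of $\mathcal T_\beta$ spanned by $n$ i.i.d.\ $\mu_\beta$-leaves, Marchal's almost-sure finite-dimensional convergence under that coupling, the in-probability GHP convergence from the sampling-consistent Markov--Branching framework of \cite{HMPW08}, and ``additional manipulations'' to upgrade to almost-sure convergence (you have also, correctly, read the normalization as $n^{1-1/\beta}$ rather than the $n^{\beta}$ appearing in the displayed statement). The delicate step you flag at the end is precisely where the work of \cite{CH13} lies --- one lets the skeleton size $k$ grow with $n$ and combines the all-order moment bound on rescaled heights of the pendant subtrees with quantitative control of the largest pendant $\mu_\beta$-mass so that Borel--Cantelli becomes summable --- so your proposal is a faithful, if not fully quantitative, reconstruction of the intended argument.
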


Using this convergence, it was shown in \cite{CH13} that randomized versions of the stable trees $\mathcal T_{\beta}, 1 <\beta \leq 2$ can be embedded into each other so as to form a decreasing (in $\beta$) sequence of trees.

To complete these results, we mention that  Chen, Ford and Winkel \cite{CFW09} propose a model that interpolate between the $\alpha$--model of Ford and  Marchal's recursive construction of the marginals of stable trees, and determine there scaling limits, relying on the results of \cite{HMPW08}.

\subsection{Cut--trees}

The notion of the cut--trees was introduced in Example 5 of Section \ref{sec:MB}.
 
\textbf{Cut--tree of a uniform Cayley tree.} We use the notation of Example 5, Section \ref{sec:MB} and let $C_n$ be a uniform Cayley tree and $T^{\mathrm{cut}}_n$ its cut--tree. Relying essentially on Stirling's formula, one gets that $q_n^{\mathrm{Cut, Cayley}}$ satisfies $(\mathrm{H})$ with $\gamma=1/2$ and $\nu=\nu_{\mathrm{Br}}/2$, which shows that the rescaled cut--tree $T^{\mathrm{cut}}_n/\sqrt n$ endowed with the uniform measure on its leaves converges in distribution to $(2\mathcal T_{\mathrm{Br}},\mu_{\mathrm{Br}})$. This was noticed in \cite{BerFire} and used to determine the scaling limits of the number of steps needed to isolated by edges delation a fixed number of vertices in $C_n$. Actually, Bertoin and Miermont \cite{BM13} improve this result by showing the joint convergence

\begin{thm}[Bertoin--Miermont \cite{BM13}]
$$\left(\frac{C_n}{\sqrt n},\frac{T^{\mathrm{cut}}_n}{\sqrt n} \right)  \overset{\mathrm{(d)}}{\underset{\mathrm{GHP}}\longrightarrow}  \ \left(2\mathcal T_{\mathrm{Br}}, 2 \overline{\mathcal T_{\mathrm{Br}}} \right)$$
where $\overline{\mathcal T_{\mathrm{Br}}}$ is a tree constructed from $ \mathcal T_{\mathrm{Br}}$, that can be interpreted as its cut--tree, and that is distributed as $\mathcal T_{\mathrm{Br}}$. 
\end{thm}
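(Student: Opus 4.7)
The plan is to realize $C_n$ and $T^{\mathrm{cut}}_n$ jointly on one probability space by equipping each edge of $C_n$ with an independent Uniform$[0,1]$ mark and carrying out the deletion procedure in the increasing order of these marks. In the continuum the analogue is: conditionally on $\mathcal T_{\mathrm{Br}}$, place a Poisson point process $\mathcal N$ on the skeleton of $2\mathcal T_{\mathrm{Br}}$ with intensity equal to its length measure, and use the atoms of $\mathcal N$ as successive cut points; the resulting genealogy of the cut fragments defines $\overline{\mathcal T_{\mathrm{Br}}}$. Under this coupling the $n$ leaves of $T^{\mathrm{cut}}_n$ are canonically identified with the vertices of $C_n$, and the leaves of $\overline{\mathcal T_{\mathrm{Br}}}$ are parametrized by points of $\mathcal T_{\mathrm{Br}}$ via $\mu_{\mathrm{Br}}$. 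These bijections are what make the pair GHP-comparable.

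I would then follow the finite-dimensional marginals plus tightness strategy already used for $T^{\mathrm{cut}}_n$ alone. Fix $k\geq 1$ and let $v_1,\ldots,v_k$ be i.i.d.\ uniform vertices of $C_n$; they serve simultaneously as $\mu_n$-samples in $C_n$ and as leaf-uniform samples in $T^{\mathrm{cut}}_n$. Aldous's theorem gives the convergence of the spanned subtree $R(C_n,\{v_i\})/\sqrt n$ to the $k$-leaf reduced subtree of $2\mathcal T_{\mathrm{Br}}$. The key combinatorial input is that the spanned subtree $R(T^{\mathrm{cut}}_n,\{v_i\})$ is measurable with respect to $R(C_n,\{v_i\})$, the Uniform$[0,1]$ marks of its edges, and the sizes of the connected components that $C_n$ splits into when those edges are successively removed: its tree shape is read off from the order of the marks, and its edge lengths count the number of further deletion steps strictly between two consecutive relevant cuts. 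After rescaling, the marks on a segment of $R(C_n,\{v_i\})$ behave as a Poisson process of intensity equal to length measure, and the counts of non-relevant deletions between two relevant ones concentrate by a law-of-large-numbers argument; one recovers in the limit the reduced tree of $2\overline{\mathcal T_{\mathrm{Br}}}$ at the corresponding $k$ leaves, jointly with the limiting $R(2\mathcal T_{\mathrm{Br}},\{L_i\})$.

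Coordinate-wise tightness is already available: for $C_n/\sqrt n$ from Aldous, and for $T^{\mathrm{cut}}_n/\sqrt n$ from Theorem \ref{mainthm:leaves} applied to $q^{\mathrm{Cut, Cayley}}_n$, which as recalled verifies $\mathsf{(H)}$ with $\gamma=1/2$ and $\nu=\nu_{\mathrm{Br}}/2$. Combining this with the joint finite-dimensional convergence through the tightness criterion of Section \ref{sec:proof} applied on the product space yields the stated joint GHP convergence. The identification $\overline{\mathcal T_{\mathrm{Br}}}\overset{\mathrm{(d)}}{=}\mathcal T_{\mathrm{Br}}$ then follows by inspecting the splitting rule of the fragmentation obtained by length-Poisson cuts of $\mathcal T_{\mathrm{Br}}$: it has index of self-similarity $1/2$ and dislocation measure $\nu_{\mathrm{Br}}$, and so its genealogical tree is again a Brownian CRT.

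The main obstacle is precisely the joint step, because the cut-tree construction is \emph{not} continuous for $d_{\mathrm{GH}}$: reordering two nearby edge marks can substantially alter $T^{\mathrm{cut}}_n$. The way around this is to work at the level of the reduced $k$-leaf subtrees, which are combinatorial objects of bounded complexity whose law is a measurable function of finitely many Uniform$[0,1]$ labels together with subtree-component sizes. One then needs to control uniformly in $n$ the error terms coming from (a) pairs of marked vertices lying inside the same or a very short branch of $R(C_n,\{v_i\})$, (b) Poisson atoms landing close to branch points in the scaling limit, and (c) the fluctuations of deletion counts around their deterministic limit. Quantifying these errors, with the help of the moment bound on $\mathsf{ht}(T_n)$ established in Section \ref{sec:proof}, is the technical core of the argument.
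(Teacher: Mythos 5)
The survey does not actually prove this statement: it is quoted from Bertoin and Miermont \cite{BM13}, and the only ingredient supplied in the text is the \emph{marginal} convergence of $T^{\mathrm{cut}}_n/\sqrt n$ to $(2\mathcal T_{\mathrm{Br}},\mu_{\mathrm{Br}})$, obtained from Theorem \ref{mainthm:leaves} once $q^{\mathrm{Cut, Cayley}}_n$ is checked to satisfy $(\mathsf H)$ with $\gamma=1/2$ and $\nu=\nu_{\mathrm{Br}}/2$. Your outline reconstructs, in its essentials, the strategy of \cite{BM13} itself: couple the deletion order with i.i.d.\ uniform edge marks, pass to reduced subtrees spanned by $k$ uniform vertices, prove joint finite--dimensional convergence using a law of large numbers for the number of cuts received by the relevant components, and upgrade to GHP via tightness of each coordinate (which indeed implies tightness of the pair). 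So the overall architecture is sound and is the right one.

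Two points need to be fixed before the sketch closes. First, the continuum object cannot be defined from ``a Poisson point process on the skeleton with intensity the length measure'': the skeleton of $\mathcal T_{\mathrm{Br}}$ has infinite total length, so such a process has infinitely many atoms in every subtree of positive length and admits no deletion order. One must use the time--indexed Aldous--Pitman point process on $\mathbb R_+\times\mathcal T_{\mathrm{Br}}$ with intensity $\mathrm dt$ times the length measure, and -- crucially -- the metric of $\overline{\mathcal T_{\mathrm{Br}}}$ is not a count of atoms but the mass integral $\int_0^{\infty}\mu_{\mathrm{Br}}\big(\mathcal T_t(x)\big)\,\mathrm dt$, where $\mathcal T_t(x)$ is the fragment containing $x$ at time $t$; this integral is precisely what your law--of--large--numbers step for the ``non--relevant'' deletions must produce, so the definition and the convergence argument have to be written consistently with each other. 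Second, the identification $\overline{\mathcal T_{\mathrm{Br}}}\overset{\mathrm{(d)}}{=}\mathcal T_{\mathrm{Br}}$ does not follow from Section \ref{sec:ssft} as directly as you assert: the fragmentation by Poissonian cuts of the skeleton is self--similar with \emph{positive} index $1/2$ (and dislocation measure $\nu_{\mathrm{Br}}$), whereas the fragmentation trees of Section \ref{sec:ssft} are genealogies of \emph{negative}--index fragmentations measured in the time scale. Passing from the time scale to the mass--integral scale requires an explicit time--change computation on the tagged fragment; alternatively, and more economically within this survey, the law of the limit of $T^{\mathrm{cut}}_n/\sqrt n$ is already identified as $2\mathcal T_{\mathrm{Br}}$ by Theorem \ref{mainthm:leaves} applied to $q^{\mathrm{Cut, Cayley}}_n$, which settles the distributional identity once the joint convergence to $(2\mathcal T_{\mathrm{Br}},2\overline{\mathcal T_{\mathrm{Br}}})$ is established. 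With these two repairs, and a quantitative version of the error controls you list at the end, the argument matches the published proof.
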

Bertoin and Miermont \cite{BM13} actually more  generally extend this result to cut--trees of Galton--Watson trees with a critical offspring distribution with finite variance.
This in turn was generalized by Dieuleveut \cite{Dieul15} to Galton--Watson trees with a critical offspring distribution in the domain of attraction of a stable law. See also \cite{ABBrH14, BrW16, Brw16+} for related results. 

\textbf{Cut--tree of a uniform recursive tree.} On the other hand, note that $q_n^{\mathrm{Cut, Recursive}}$ does not satisfy $(\mathrm{H})$. However, Bertoin showed in \cite{Ber15} that in this case, the cut--tree $T_n$ rescaled by $n/\ln(n)$ converges for the GHP--topology to a segment of length 1, equipped with the Lebesgue measure.

\section{Further perspectives}
\label{sec:further}

\subsection{Multi--type Markov--Branching trees and applications}
\label{sec:multitype}

It is possible to enrich trees with \emph{types}, by deciding that each vertex of a tree carries a type, which is an element of a finite or countable set. This multi--type setting is often used in the context of branching processes, where individuals with different types may evolve differently, and had been widely studied. For the trees point of view, scaling limits of multi--type Galton--Watson trees conditioned to have a given number of vertices have been studied by Miermont \cite{Mier08} when both the set of types  and the covariance matrix of the offspring distributions are finite, by Berzunza \cite{Berz16+} when the set of types is finite  with offspring distributions in the domain of attraction of a stable distribution and by de Raphélis \cite{DeRaph15+} when the number of type is infinite, under a finite variance--type assumption. The Brownian and stable trees appear in the scaling limits.

One may more generally be interested in multi--type Markov--Branching trees, which are sequences of trees with vertices carrying types, where, roughly, the subtrees  above the root are independent and with distributions that only depend on their size and on the type of their root. In a work in progress \cite{HS15+},  results similar to Theorem \ref{mainthm:leaves} and Theorem \ref{mainthm:vertices} are set up for multi--type MB--trees, when the set of types is finite. Interestingly, different regimes appear in the scaling limits (multi--type or standard fragmentation trees), according to whether the rate of type change is faster or equal or slower than the rate of macroscopic branchings.

This should lead  to new proofs of the results obtained in \cite{Mier08,Berz16+}. This should also lead to other interesting applications, in particular to dynamical models of tree growth. In these growing models one starts from a finite alphabet of trees and then glues recursively trees by choosing at each step one tree at random in the alphabet and grafting it uniformly on an edge of the pre--existing tree. This generalizes the $k$--ary construction studied in Section \ref{sec:examples} and Section \ref{scalkary}, and is connected to multi--type MB--trees. In this general setting multi--type fragmentation trees will appear in the scaling limits.

\subsection{Local limits}

This survey deals with scaling limits of random trees. There is another classical way to consider limits of sequences of trees (or graphs), that of \emph{local limits}. This approach is quite different and provides other information on the asymptotics of the trees (e.g. on the limiting behavior of the degrees of vertices). Roughly, a sequence of finite rooted trees $(t_n)$ is said to converge locally to a limit $t$ if for all $R>0$, the restriction of $t_n$ to a ball of radius $R$ centered at the root converges to the restriction of $t$ to a ball of radius $R$ centered at the root. The trees are therefore not rescaled and the limit is still a discrete object.

For results on the local limits of random models related to the ones considered here, we refer to: Abraham and Delmas \cite {ADLoc2, ADLoc1} and the references therein for Galton--Watson trees, Stef\'ansson \cite{Ste09} for Ford's $\alpha$--model and Pagnard \cite{Pagnard16} for general MB--sequences and the study of the volume growth of their local limits. We also mention the related work by Broutin and Mailler \cite{BM15+} that uses local limits of some models of MB--trees to study asymptotics of And/Or trees, that code boolean functions.

\subsection{Related random geometric structures}

The discrete trees form a subclass of graphs and are generally simpler to study. There exist however several models of graphs (that are not trees) whose asymptotic study can be conducted by using trees. Different approaches are possible and it is not our purpose to present them here. However we still give some references that are related to some models of trees presented here (in particular Galton--Watson trees) to the interested reader   (the list is not exhaustive):
\begin{enumerate}
\item[$\bullet$] on random graphs converging to the Brownian tree: \cite{AlbMark08,Bett15, Cara15+,CHK14,JanStef15,PSW15+, Stu16+2}
\item[$\bullet$] on random graphs converging to tree--like structures:  \cite{CRLoop13,CKPerco,CDKMPref}
\item[$\bullet$] on the Erd\H{o}s--Rényi random graph in the critical window and application to the minimum spanning tree of the
complete graph: \cite{ABBG12,ABBGM13}
\item[$\bullet$] on random maps (which are strongly connected to labeled trees): \cite{MiermontStFlour} and all the references therein.
\end{enumerate} 
In most of these works the Brownian tree intervenes in the construction of the continuous limit.


%
%
%
%
%
\bibliographystyle{siam}
\bibliography{FragAvril16}

\end{document}